\numberwithin{equation}{section}
\newcommand{\beq}{\begin{equation}}
\newcommand{\eeq}{\end{equation}}
\newcommand{\ben}{\begin{eqnarray}}
\newcommand{\een}{\end{eqnarray}}
\newcommand{\beno}{\begin{eqnarray*}}
\newcommand{\eeno}{\end{eqnarray*}}
\newtheorem{theorem}{Theorem}[section]
\newtheorem{definition}[theorem]{Definition}
\newtheorem{lemma}[theorem]{Lemma}
\newtheorem{proposition}[theorem]{Proposition}
\newtheorem{corollary}[theorem]{Corollary}
\newtheorem{remark}[theorem]{Remark}
\begin{document}
\title[Topological sequence entropy]{Topological sequence entropy of nonautonomous dynamical systems}

\author{Hua Shao}
\address{Department of Mathematics, Nanjing University of Aeronautics and Astronautics,
  Nanjing 211106, P. R. China}
\address{Key Laboratory of Mathematical Modelling and High Performance Computing of Air Vehicles, Nanjing
University of Aeronautics and Astronautics, MIIT, Nanjing 211106, P. R. China}
\email{huashao@nuaa.edu.cn}
\date{\today}

\maketitle

\begin{abstract}
Let $f_{0,\infty}=\{f_n\}_{n=0}^{\infty}$ be a sequence of continuous self-maps on a compact metric space $X$.
Firstly, we obtain the relations between topological sequence entropy of a nonautonomous dynamical system
$(X,f_{0,\infty})$ and that of its finite-to-one extension. We then prove that the topological sequence entropy of
$(X,f_{0,\infty})$ is no less than its corresponding measure sequence entropy if $X$ has finite
covering dimension. Secondly, we study the supremum topological sequence entropy of $(X,f_{0,\infty})$, and confirm that
it equals to that of its $n$-th compositions system if $f_{0,\infty}$ is equi-continuous; and we prove the
supremum topological sequence entropy of $(X,f_{i,\infty})$ is no larger than that of $(X,f_{j,\infty})$ if $i\leq j$,
and they are equal if $f_{0,\infty}$ is equi-continuous and surjective. Thirdly, we investigate the
topological sequence entropy relations between $(X,f_{0,\infty})$ and $(\mathcal{M}(X),\hat{f}_{0,\infty})$ induced on
the space $\mathcal{M}(X)$ of all Borel probability measures, and obtain that given any sequence, the topological
sequence entropy of $(X,f_{0,\infty})$ is zero if and only if that of $(\mathcal{M}(X),\hat{f}_{0,\infty})$ is zero;
the topological sequence entropy of $(X,f_{0,\infty})$ is positive if and only if that of $(\mathcal{M}(X),\hat{f}_{0,\infty})$
is infinite. By applying this result, we obtain some big differences between entropies of
nonautonomous dynamical systems and that of autonomous dynamical systems. Finally, we study whether
multi-sensitivity of $(X,f_{0,\infty})$ imply positive or infinite topological sequence entropy.
\end{abstract}

{\bf  Keywords}: Topological sequence entropy; Nonautonomous dynamical system; Induced system; Supremum
topological sequence entropy; Multi-sensitivity.

{2010 {\bf  Mathematics Subject Classification}}: 37B40, 37B55, 54H20.

\section{Introduction}
Entropy is a key tool to measure the complexity of dynamical systems. The measure-theoretical entropy
was first introduced by Kolmogorov, then Adler, Konhelm and McAndrew extended it to the
context of topological dynamical systems. In order to establish the relations between spectrum theory
and entropy theory, Ku\v shnirenko \cite{Kushnirenko} generalized measure-theoretical entropy and
proposed the concept of sequence entropy. In a precisely analogous way, Goodman \cite{Goodman74}
introduced the notion of topological sequence entropy as an extension of topological entropy.
Topological sequence entropy is an important conjugacy invariant, which is useful
for distinguishing between chaotic and nonchaotic interval maps \cite{Franz91},
and also a good tool to distinguish among continuous maps with zero topological entropy.

In 1996, Kolyada and Snoha \cite{Kolyada96} extended the concept of topological entropy to
nonautonomous dynamical systems, and obtained a series of important properties of it. Their major
motivation is to have a good understanding of the topological entropy of skew product of dynamical systems, whose
topological entropy can be ultimately transformed into computing the topological entropy of
a nonautonomous dynamical system. Note that the majority of complex systems in biology, physics and engineering
are driven by sequences of different functions, and thus the study on nonautonomous dynamical systems is of importance
in applications. Moreover, the behaviors of nonautonomous dynamical systems are much richer and sometimes quite
different than what are expected from the classical cases. For example, Balibrea and Oprocha \cite{Bali12}
constructed a nonautonomous dynamical system which has positive topological entropy, but does not exhibit Li-Yorke chaos.
Based on the above reasons, we shall study topological sequence entropy (t.s.e. for short)
of nonautonomous dynamical systems (NDSs for short) in the present paper.

By a NDS, we mean a pair $(X,f_{0,\infty})$, where $X$ is a compact metric space equipped with a metric $d$ and
$f_{0,\infty}=\{f_n\}_{n=0}^{\infty}$ is a sequence of continuous self-maps on $X$. If $f_n=f$ for each $n\geq0$,
then $(X, f_{0,\infty})$ becomes the classical dynamical system $(X,f)$. For any $x_0\in X$, the positive orbit
$\{x_n\}_{n=0}^{\infty}$ of $(X,f_{0,\infty})$ starting from $x_0$ is defined by $x_n=f_0^n(x_0)$, where
$f_0^n=f_{n-1}\circ\cdots\circ f_{0}$ for any $n\geq 1$, and $\{x_n\}_{n=0}^{\infty}$ can be seen as a solution
of the nonautonomous difference equation
\[x_{n+1}=f_n(x_n),\;n\geq0.\]
Now, many authors have studied entropy theory of NDSs.
For example, Zhu et al. \cite{Zhu12} investigated the properties of entropies (both topological and measure-theoretic) in NDSs,
and also discussed their relations. Kawan \cite{Kawan13} studied measure-theoretical entropy
in NDSs, and proved that the measure-theoretical entropy can be estimated from above by its topological entropy.
Andres \cite{Andres22} studied parametric topological entropy for a sequence of continuous self-maps on a compact metric space,
and applied it to differential equations with time-dependent impulses. Recently, we \cite{Shao23} considered the properties,
calculations and estimations of topological entropy for a NDS on a compact uniform space.
For more information about entropy theory of NDSs, the readers are referred to \cite{Andres22,Bis,Canovas13,Kawan13,Kolyada96,Kolyada99,Liu20,Shao16,Shao22,Shao23,Sotola,Xu18,Zhu12}
and references therein.

It is worth mentioning that Bauer and Sigmund \cite{Bauer} studied systematically what topological properties of
$(X,f)$ can carry over to its induced system $(\mathcal{M}(X),\hat{f})$ on the space $\mathcal{M}(X)$ of all
Borel probability measures. They proved that positive topological entropy of $(X,f)$ implies infinite topological entropy
of $(\mathcal{M}(X),\hat{f})$. Glasner and Weiss \cite{Glasner} obtained that $(X,f)$ has zero topological entropy
if and only if $(\mathcal{M}(X),\hat{f})$ has zero topological entropy. The method in one of their proof established
a remarkable connection between the entropy theory and combinatorial theory.
Then, Kerr and Li \cite{Kerr05} further developed this method and obtained that $(X,f)$ is null if and only if
$(\mathcal{M}(X),\hat{f})$ is null, and recall that $(X,f)$ is null if the t.s.e. of $(X,f)$ is zero along any sequence.
Later, Qiao and Zhou \cite{Qiao17} strengthened this result and extended Glasner and Weiss's result
to t.s.e. Recently, Liu et al. \cite{Liu20} extended Glasner and Weiss's result
to NDSs. Motivated by these work, we consider the t.s.e. relations between a NDS and its induced system on
$\mathcal{M}(X)$, which is one problem studied in the present paper.
Some main results are listed as follows.

(1) The t.s.e. relation between a NDS and its finite-to-one extension is given, see Theorem \ref{8244}.

(2) The t.s.e. of $(X,f_{0,\infty})$ is larger than or equal to its corresponding measure sequence entropy
if $X$ has finite covering dimension, see Theorem \ref{7281}.

(3) The supremum t.s.e. of $(X,f_{0,\infty})$ equals to that of its $n$-th compositions system provided that
$f_{0,\infty}$ is equi-continuous, see Theorem \ref{sequence entropy equality}.

(4) The supremum t.s.e. of $(X,f_{i,\infty})$ is less than or equal to that of $(X,f_{j,\infty})$ if $i\leq j$,
and they are equal if $f_{0,\infty}$ is equi-continuous and surjective,
see Propositions \ref{sequence entropy inequality} and \ref{8246}.

(5) For any sequence $A$, the t.s.e. $h_A(X, f_{0,\infty})=0$ if and only if $h_A(\mathcal{M}(X), \hat{f}_{0,\infty})=0$;
and $h_A(X, f_{0,\infty})>0$ if and only if $h_A(\mathcal{M}(X),\hat{f}_{0,\infty})=+\infty$; see Theorems \ref{sequence entropy}
and \ref{sequence entropy 1}.

(6) If $(X,f)$ has positive topological entropy, then $h_A(X, f)>0$ and $h_A(\mathcal{M}(X),\hat{f})=+\infty$
for any sequence $A$. However, there exists a NDS $(X, f_{0,\infty})$ with positive topological entropy, but
$h_A(X, f_{0,\infty})=h_A(\mathcal{M}(X),\hat{f}_{0,\infty})=0$ along some sequence $A$, see Theorem \ref{822}.

(7) For any topologically transitive interval dynamical system $(I,f)$, $h_A(\mathcal{M}(X),\hat{f})=+\infty$
for any sequence $A$. However, there exists a topologically transitive NDS $(I,f_{0,\infty})$
satisfying that $h_A(\mathcal{M}(X),\hat{f}_{0,\infty})=0$ along some sequence $A$, see Theorem \ref{8242}.

(8) If $(X,f_{0,\infty})$ is multi-sensitive, then there exists some sequence $A$ such that  $h_A(X,f_{0,\infty})=h_A(\mathcal{M}(X),\hat{f}_{0,\infty})=+\infty$ provided that $X$ is locally connected
and $f_0$ is surjective, see Theorem \ref{8182}.

Now, we compare some existing results in the literatures with our new results in this paper.
It is shown in \cite{Bali99} that Bowen's inequality for t.s.e. does not hold.
In Theorem \ref{8244}, we obtain an inequality that characterizes the t.s.e. relation between a NDS and
its finite-to-one extension replacing Bowen's inequality.
Goodman \cite{Goodman74} showed that t.s.e. of $(X,f)$ on a finite-dimensional space is no less than
its corresponding measure sequence entropy with respect to any invariant measure. We extend this result
to NDSs and obtained that the t.s.e. of $(X,f_{0,\infty})$ bounds its measure sequence entropy in Theorem 3.7.
We generalize the notion of maximal pattern entropy,
which equals to the supremum of all t.s.e., introduced in \cite{Huang09} to NDSs. We get that the supremum
t.s.e. of $(X,f_{0,\infty})$ equals to that of its $n$-th compositions system provided that $f_{0,\infty}$
is equi-continuous in Theorem 4.4, which also extends the corresponding results in \cite{Huang09} and \cite{Wu17}
to NDSs. Moreover, we investigate whether the supremum t.s.e. of $(X,f_{i,\infty})$ increases or decreases
as $i$ increases in Propositions 4.5 and 4.6.
In Theorems \ref{sequence entropy} and \ref{sequence entropy 1}, we confirm that the t.s.e. relations between
$(X,f)$ and $(\mathcal{M}(X),\hat{f})$ in \cite{Qiao17} and  topological entropy relations between
$(X,f_{0,\infty})$ and $(\mathcal{M}(X),\hat{f}_{0,\infty})$ in \cite{Liu20} also hold for t.s.e. in the context of NDSs.
In particular, by applying Theorems \ref{sequence entropy} and \ref{sequence entropy 1}, we obtain
some big differences between sequence entropy of NDSs and that of classical (i.e. autonomous) dynamical systems,
see Theorems \ref{822} and \ref{8242}, and Proposition \ref{8243}. Huang et al. \cite{Huang16} showed that
any multi-sensitive system $(X,f)$ has positive topological sequence entropy. Then, Wu \cite{Wu17}
proved that the multi-sensitivity of $(X,f)$ implies infinite supremum t.s.e.if $X$ is locally connected.
Recently, Huang and Zhu \cite{Huang23} studied this problem in the context of group actions.
In Theorems 6.1 and 6.2, we explore this problem in NDSs and
obtain some NDS versions of the above mentioned results.

The rest of the paper is organized as follows. In Section 2, the definitions and basic properties of t.s.e. are presented, and the induced system on $\mathcal{M}(X)$ are also recalled. Several calculations and estimations of t.s.e. of $(X,f_{0,\infty})$ are investigated in Section 3. In Section 4, some properties of the supremum t.s.e. of $(X,f_{0,\infty})$ are studied. The relations between zero t.s.e. (resp. positive t.s.e.) of $(X,f_{0,\infty})$ and that of $(\mathcal{M}(X),\hat{f}_{0,\infty})$ are investigated in Section 5. Particularly, some interesting comparisons between t.s.e. of NDSs and that of autonomous dynamical systems are obtained. In Section 6, whether
multi-sensitivity of $(X,f_{0,\infty})$ imply positive or infinite t.s.e. are investigated.

\section{Preliminaries}

In this section, we first recall the definitions of t.s.e. using open covers
and using separated sets and spanning sets for $(X,f_{0,\infty})$, respectively, then present
some basic properties of it, and finally recall the the induced system of $(X,f_{0,\infty})$
on the space of probability measures.

By $\mathds{N}$ denote the set of positive integers and $\mathds{Z}_{+}=:\{0\}\cup \mathds{N}$,
and by $\mathcal{S}$ denote the set of all increasing sequences of $\mathds{Z}_{+}$.
Let $B(x,\epsilon)$ and $\bar{B}(x,\epsilon)$ denote the open and closed balls of radius $\epsilon>0$
centered at $x\in X$, respectively. Note that the cardinality of a set $F\subset X$ is denoted by $|F|$.

\subsection{Definitions of t.s.e.}

For any open covers $\mathscr{A}_1,\cdots,\mathscr{A}_n,\mathscr{A}$ of $X$, denote
\[\bigvee_{i=1}^{n}\mathscr{A}_i=\left\{\bigcap_{i=1}^{n}A_i: A_i\in\mathscr{A}_i,\; 1\leq i\leq n\right\},\;
f_{i}^{-n}(\mathscr{A})=\{f_{i}^{-n}(A): A\in\mathscr{A}\},\]
where
\[f_{i}^{n}=f_{i+n-1}\circ\cdots\circ f_{i},\;f_{i}^{-n}=(f_{i}^{n})^{-1}=f_{i}^{-1}\circ\cdots\circ f_{i+n-1}^{-1}.\]
Note that $f_{i}^{-1}$ will be applied to sets, we don't assume that $f_i$ is invertible for each $i\geq0$.
Let $\mathcal{N}(\mathscr{A})$ be the minimal possible cardinality of all subcovers chosen from $\mathscr{A}$.
Let $\Lambda$ be a nonempty subset of $X$. Denote the cover $\{A\cap\Lambda: A\in\mathscr{A}\}$ of the set $\Lambda$ by
$\mathscr{A}|_{\Lambda}$. Then the t.s.e. of $f_{0,\infty}$ on the set $\Lambda$ with respect to $\mathscr{A}$ and $A=\{a_i\}_{i=1}^{\infty}\in\mathcal{S}$ is defined by
\[h_A(f_{0,\infty},\Lambda,\mathscr{A})
=\limsup_{n\to\infty}\frac{1}{n}\log\mathcal{N}\left(\left(\bigvee_{i=1}^{n}f_{0}^{-a_i}\mathscr{A}\right)\big|_{\Lambda}\right),\]
and the t.s.e. of $f_{0,\infty}$ on $\Lambda$ along $A$ is defined by
\[h_A(f_{0,\infty},\Lambda)=\sup_{\mathscr{A}}h_A(f_{0,\infty},\Lambda,\mathscr{A}),\]
where supremum is taken over all finite open covers of $X$.

Now, we recall the definitions of t.s.e. of $(X,f_{0,\infty})$ based on separated sets and spanning sets.
Let $n\geq1$ and $\epsilon>0$. A set $E\subset X$ is called $(n,\epsilon,A)$-separated if, for any $x\neq y\in E$, there exists
$1\leq j\leq n$ such that $d(f_{0}^{a_j}(x),f_{0}^{a_j}(y))>\epsilon$. A set $F\subset X$ is called $(n,\epsilon,A)$-spans
another set $K\subset X$ if for any $y\in K$, there exists $x\in F$ such that $d(f_{0}^{a_j}(x),f_{0}^{a_j}(y))\leq\epsilon$ for any $1\leq j\leq n$. Let $s_{n}(\epsilon,A,f_{0,\infty},\Lambda)$ be the maximal cardinality of an $(n,\epsilon,A)$-separated set
in $\Lambda$, and $r_n(\epsilon,A,f_{0,\infty},\Lambda)$ and $r_n^{X}(\epsilon,A,f_{0,\infty},\Lambda)$
be the minimal cardinality of a set in $\Lambda$ and in $X$, respectively, which $(n,\epsilon,A)$-spans
$\Lambda$.
Since $X$ is compact, $s_{n}(\epsilon,A,f_{0,\infty},\Lambda)$, $r_n(\epsilon,A,f_{0,\infty},\Lambda)$ and $r_n^{X}(\epsilon,A,f_{0,\infty},\Lambda)$ are finite. It is easy to verify the following inequality:
\begin{equation}\label{72821}
r_n(\epsilon,A,f_{0,\infty},\Lambda)\leq s_n(\epsilon,A,f_{0,\infty},\Lambda)\leq r_n^{X}(\epsilon/2,A,f_{0,\infty},\Lambda)
\leq r_n(\epsilon/2,A,f_{0,\infty},\Lambda).
\end{equation}
By using separated sets and spanning sets, together with (\ref{72821}), one can define the t.s.e. of $f_{0,\infty}$ on $\Lambda$ along the sequence $A$ as
\begin{align*}\label{topological sequence entropy}
ent_{A}(f_{0,\infty},\Lambda):&=\lim_{\epsilon\to0}\limsup_{n\to\infty}\frac{1}{n}\log s_{n}(\epsilon,A,f_{0,\infty},\Lambda)
=\lim_{\epsilon\to0}\limsup_{n\to\infty}\frac{1}{n}\log r_{n}(\epsilon,A,f_{0,\infty},\Lambda)\\
&=\lim_{\epsilon\to0}\limsup_{n\to\infty}\frac{1}{n}\log r_{n}^X(\epsilon,A,f_{0,\infty},\Lambda).
\end{align*}

The diameter of an open cover $\mathscr{A}$ of $X$ is $d(\mathscr{A})=\sup_{A\in\alpha}d(A)$, where $d(A)=\sup_{x,y\in A}d(x,y)$.
A Lebesgue number of $\mathscr{A}$ is some $\delta>0$ satisfying that for any $E\subset X$, $d(E)<\delta$ implies that $E\subset A$
for some $A\in\mathscr{A}$. Let $\epsilon>0$ and $\mathscr{A}$ be an open cover of $X$ with diameter less than $\epsilon$. Then
\begin{equation}\label{211}
s_n(\epsilon,A,f_{0,\infty},\Lambda)\leq\mathcal{N}\left(\left(\bigvee_{i=1}^{n}f_{0}^{-a_i}\mathscr{A}\right)\big|_{\Lambda}\right).
\end{equation}
Let $\mathscr{A}$ be an open cover of $X$ with Lebesgue number $\delta>0$, then one obtains that
\begin{equation}\label{221}
\mathcal{N}\left(\left(\bigvee_{i=1}^{n}f_{0}^{-a_i}\mathscr{A}\right)\big|_{\Lambda}\right)\leq r_n(\delta/3,A,f_{0,\infty},\Lambda).
\end{equation}
It follows from (\ref{211}) and (\ref{221}) that we directly get the below result.
\begin{proposition}\label{724}
\[h_A(f_{0,\infty},\Lambda)=ent_{A}(f_{0,\infty},\Lambda).\]
\end{proposition}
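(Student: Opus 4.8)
The plan is to prove the two inequalities $ent_{A}(f_{0,\infty},\Lambda)\le h_A(f_{0,\infty},\Lambda)$ and $h_A(f_{0,\infty},\Lambda)\le ent_{A}(f_{0,\infty},\Lambda)$ separately, the first from (\ref{211}) and the second from (\ref{221}), and then combine them. By (\ref{72821}) the three $\epsilon\to0$ limits appearing in the definition of $ent_{A}(f_{0,\infty},\Lambda)$ coincide, so I am free to use the $s_n$-version in the first inequality and the $r_n$-version in the second. I will also record the elementary fact that $\epsilon\mapsto s_n(\epsilon,A,f_{0,\infty},\Lambda)$ and $\epsilon\mapsto r_n(\epsilon,A,f_{0,\infty},\Lambda)$ are nonincreasing, so that $\limsup_{n\to\infty}\frac{1}{n}\log s_n(\epsilon,A,f_{0,\infty},\Lambda)$ and $\limsup_{n\to\infty}\frac{1}{n}\log r_n(\epsilon,A,f_{0,\infty},\Lambda)$ are nondecreasing as $\epsilon\downarrow0$; in particular the limit $\lim_{\epsilon\to0}$ in the definition of $ent_{A}(f_{0,\infty},\Lambda)$ exists in $[0,+\infty]$ and equals the supremum over $\epsilon>0$.

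For $ent_{A}(f_{0,\infty},\Lambda)\le h_A(f_{0,\infty},\Lambda)$, fix $\epsilon>0$. Since $X$ is compact, the open cover $\{B(x,\epsilon/2):x\in X\}$ has a finite subcover $\mathscr{A}$, which has diameter less than $\epsilon$; hence (\ref{211}) applies to $\mathscr{A}$, and taking $\limsup_{n\to\infty}\frac{1}{n}\log(\cdot)$ on both sides of (\ref{211}) gives
\[\limsup_{n\to\infty}\frac{1}{n}\log s_n(\epsilon,A,f_{0,\infty},\Lambda)\le h_A(f_{0,\infty},\Lambda,\mathscr{A})\le h_A(f_{0,\infty},\Lambda),\]
the last inequality being the definition of $h_A(f_{0,\infty},\Lambda)$ as a supremum over finite open covers. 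Since the left-hand side is nondecreasing as $\epsilon\downarrow0$ with limit $ent_{A}(f_{0,\infty},\Lambda)$, letting $\epsilon\to0$ yields $ent_{A}(f_{0,\infty},\Lambda)\le h_A(f_{0,\infty},\Lambda)$.

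For the reverse inequality, let $\mathscr{A}$ be an arbitrary finite open cover of $X$, and let $\delta>0$ be a Lebesgue number of $\mathscr{A}$, which exists by compactness. Taking $\limsup_{n\to\infty}\frac{1}{n}\log(\cdot)$ on both sides of (\ref{221}) gives
\[h_A(f_{0,\infty},\Lambda,\mathscr{A})\le\limsup_{n\to\infty}\frac{1}{n}\log r_n(\delta/3,A,f_{0,\infty},\Lambda)\le ent_{A}(f_{0,\infty},\Lambda),\]
the second inequality holding because $r_n(\delta/3,A,f_{0,\infty},\Lambda)\le r_n(\epsilon,A,f_{0,\infty},\Lambda)$ whenever $0<\epsilon\le\delta/3$ and $\lim_{\epsilon\to0}\limsup_{n\to\infty}\frac{1}{n}\log r_n(\epsilon,A,f_{0,\infty},\Lambda)=ent_{A}(f_{0,\infty},\Lambda)$. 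Taking the supremum over all finite open covers $\mathscr{A}$ gives $h_A(f_{0,\infty},\Lambda)\le ent_{A}(f_{0,\infty},\Lambda)$, and combining this with the first inequality completes the proof.

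I do not expect a genuine obstacle here: this is the standard reconciliation of the open-cover formulation of topological sequence entropy with the separated/spanning formulation, parallel to the classical argument for topological entropy, and it reduces entirely to the inequality chains (\ref{211})--(\ref{221}), to compactness of $X$ (used both to produce finite covers of arbitrarily small diameter and to produce Lebesgue numbers), and to the monotonicity of $s_n$ and $r_n$ in $\epsilon$. The only point warranting a moment's care is the interchange of the supremum over covers with the $\epsilon\to0$ limit, which is why I isolate the monotonicity remark at the outset.
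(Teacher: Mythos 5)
Your proposal is correct and follows exactly the paper's route: the paper derives Proposition \ref{724} directly from the inequalities (\ref{211}) and (\ref{221}) (together with (\ref{72821})), and your argument is simply the standard fleshing-out of that one-line deduction, using compactness to produce small-diameter covers and Lebesgue numbers and monotonicity in $\epsilon$ to pass to the limit. No substantive difference and no gap.
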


\begin{remark}
By Proposition \ref{724}, all the above definitions of t.s.e. for $(X,f_{0,\infty})$ are equivalent.
In the following, we no longer distinguish between $h_A(f_{0,\infty},\Lambda)$ and $ent_{A}(f_{0,\infty},\Lambda)$,
and uniformly use $h_A(f_{0,\infty},\Lambda)$ to denote the t.s.e. of $f_{0,\infty}$ on the set
$\Lambda$ along the sequence $A$. If $\Lambda=X$, then $h_A(f_{0,\infty}):=h_{A}(f_{0,\infty},X)$ is called the t.s.e.
of $(X,f_{0,\infty})$ along the sequence $A$. If $A=\mathds{Z}_{+}$, then we recover standard topological entropy $h(f_{0,\infty})$ of $(X,f_{0,\infty})$ \cite{Kolyada96}.
\end{remark}

\subsection{Some basic properties of t.s.e.}

\begin{proposition}
Let $A=\{a_i\}_{i=1}^{\infty}\in\mathcal{S}$. If $X=\bigcup_{i=1}^{m}K_i$, then
\[h_{A}(f_{0,\infty})=\max_{1\leq i\leq m}h_{A}(f_{0,\infty},K_i).\]
\end{proposition}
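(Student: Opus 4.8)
The plan is to establish the two inequalities $\max_{1\le i\le m}h_A(f_{0,\infty},K_i)\le h_A(f_{0,\infty})$ and $h_A(f_{0,\infty})\le\max_{1\le i\le m}h_A(f_{0,\infty},K_i)$ separately, using the spanning-set description of t.s.e.\ (or, alternatively, the open-cover one).

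For the first inequality I would first record that $h_A(f_{0,\infty},\cdot)$ is monotone under inclusion of subsets of $X$: if $\Lambda'\subset\Lambda$, then any $(n,\epsilon,A)$-separated subset of $\Lambda'$ is also an $(n,\epsilon,A)$-separated subset of $\Lambda$, so $s_n(\epsilon,A,f_{0,\infty},\Lambda')\le s_n(\epsilon,A,f_{0,\infty},\Lambda)$, and passing to $\frac1n\log(\cdot)$, then $\limsup_{n\to\infty}$, then $\lim_{\epsilon\to0}$ gives $h_A(f_{0,\infty},\Lambda')\le h_A(f_{0,\infty},\Lambda)$. Taking $\Lambda'=K_i$ and $\Lambda=X$ yields $h_A(f_{0,\infty},K_i)\le h_A(f_{0,\infty},X)=h_A(f_{0,\infty})$ for every $i$, hence the same bound for the maximum over $i$.

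For the reverse inequality I would pass through spanning sets. Fix $\epsilon>0$ and $n\ge1$, and for each $i$ pick an $(n,\epsilon,A)$-spanning set $F_i\subset K_i$ of $K_i$ with $|F_i|=r_n(\epsilon,A,f_{0,\infty},K_i)$. Put $F:=\bigcup_{i=1}^m F_i\subset X$. Given $y\in X$ there is some $i$ with $y\in K_i$, hence some $x\in F_i\subset F$ with $d(f_0^{a_j}(x),f_0^{a_j}(y))\le\epsilon$ for all $1\le j\le n$; thus $F$ $(n,\epsilon,A)$-spans $X$, and
\[r_n^X(\epsilon,A,f_{0,\infty},X)\le|F|\le\sum_{i=1}^m r_n(\epsilon,A,f_{0,\infty},K_i)\le m\max_{1\le i\le m}r_n(\epsilon,A,f_{0,\infty},K_i).\]
Taking $\frac1n\log(\cdot)$ gives $\frac1n\log r_n^X(\epsilon,A,f_{0,\infty},X)\le\frac{\log m}{n}+\max_{1\le i\le m}\frac1n\log r_n(\epsilon,A,f_{0,\infty},K_i)$, and letting $n\to\infty$ and then $\epsilon\to0$, and invoking the spanning-set formulas for $h_A$ recalled in Section 2, we obtain $h_A(f_{0,\infty})\le\max_{1\le i\le m}h_A(f_{0,\infty},K_i)$.

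The only steps needing care — and the main, though minor, obstacle — are the two interchanges of limit operations with the finite maximum over $i\in\{1,\dots,m\}$: first $\limsup_{n\to\infty}\max_i b_n^{(i)}=\max_i\limsup_{n\to\infty}b_n^{(i)}$ for finitely many real sequences, and then, since $\epsilon\mapsto\limsup_{n\to\infty}\frac1n\log r_n(\epsilon,A,f_{0,\infty},K_i)$ is monotone in $\epsilon$, $\lim_{\epsilon\to0}\max_i(\cdot)=\max_i\lim_{\epsilon\to0}(\cdot)$; the term $\frac{\log m}{n}\to0$ is harmless. If one prefers covers, the same scheme works: for any finite open cover $\mathscr{A}$ of $X$, uniting minimal subcovers of $(\bigvee_{i=1}^n f_0^{-a_i}\mathscr{A})|_{K_i}$ over $i$ produces a subcover of $\bigvee_{i=1}^n f_0^{-a_i}\mathscr{A}$, so $\mathcal{N}((\bigvee_{i=1}^n f_0^{-a_i}\mathscr{A})|_X)\le m\max_i\mathcal{N}((\bigvee_{i=1}^n f_0^{-a_i}\mathscr{A})|_{K_i})$, and one concludes after taking $\limsup_{n\to\infty}$ and $\sup_{\mathscr{A}}$, using the reverse monotonicity of $\mathcal{N}$ under restriction for the other direction.
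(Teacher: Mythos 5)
Your argument is correct, and it is essentially the same finite-pieces counting as the paper's: the paper works with open covers, restricting $\bigvee_{i=1}^{n}f_{0}^{-a_i}\mathscr{A}$ to each $K_i$, uniting minimal subcovers to get $\mathcal{N}\bigl(\bigvee_{i=1}^{n}f_{0}^{-a_i}\mathscr{A}\bigr)\leq\sum_{i=1}^{m}\mathcal{N}\bigl(\bigl(\bigvee_{i=1}^{n}f_{0}^{-a_i}\mathscr{A}\bigr)\big|_{K_i}\bigr)$, and then invoking Lemma 4.1.9 of Alsed\`a--Llibre--Misiurewicz to turn the $\limsup\frac1n\log$ of the sum into the maximum of the $\limsup$'s. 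Your main route does the identical bookkeeping through spanning sets (via Proposition \ref{724}), replacing that lemma by the bound $\sum_i\leq m\max_i$ and the harmless term $\frac{\log m}{n}$, and your cover-based sketch at the end is the paper's proof almost verbatim; the only point worth stating explicitly in the spanning-set version is the finiteness of $r_n(\epsilon,A,f_{0,\infty},K_i)$, which the paper already records after (\ref{72821}).
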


\begin{proof}
Since $h_{A}(f_{0,\infty})\geq h_{A}(f_{0,\infty},K_i)$ for each $1\leq i\leq m$,
$h_{A}(f_{0,\infty})\geq\max_{1\leq i\leq m}h_{A}(f_{0,\infty},K_i)$ is trivial.
It is to show the reverse inequality.
Given an open cover $\mathscr{A}$ of $X$ and $n\geq1$. Denote $\mathscr{B}_1,\cdots,\mathscr{B}_m$ be subcovers chosen from the
covers $\big(\bigvee_{i=1}^{n}f_{0}^{-a_i}\mathscr{A}\big)|_{K_1},\cdots,\big(\bigvee_{i=1}^{n}f_{0}^{-a_i}\mathscr{A}\big)|_{K_m}$.
Let $\mathscr{B}=\bigcup_{i=1}^{m}\mathscr{B}_i$. Then $\mathscr{B}$ is also an open cover of $X$ and also a refinement
of $\bigvee_{i=1}^{n}f_{0}^{-a_i}\mathscr{A}$. Thus,
\[\mathcal{N}\left(\bigvee_{i=1}^{n}f_{0}^{-a_i}\mathscr{A}\right)
\leq\sum_{i=1}^{m}\mathcal{N}\left(\big(\bigvee_{i=1}^{n}f_{0}^{-a_i}\mathscr{A}\big)\big|_{K_i}\right).\]
This, together with Lemma 4.1.9 in \cite{Alse93}, implies that
\begin{equation*}
\begin{split}
h_A(f_{0,\infty},\mathscr{A})
&=\limsup_{n\to\infty}\frac{1}{n}\log\mathcal{N}\left(\bigvee_{i=1}^{n}f_{0}^{-a_i}\mathscr{A}\right)
\leq\limsup_{n\to\infty}\frac{1}{n}\log\sum_{i=1}^{m}\mathcal{N}\left(\big(\bigvee_{i=1}^{n}f_{0}^{-a_i}\mathscr{A}\big)\big|_{K_i}\right)\\
&=\max_{1\leq i\leq m}\limsup_{n\to\infty}\frac{1}{n}\log\mathcal{N}\left(\big(\bigvee_{i=1}^{n}f_{0}^{-a_i}\mathscr{A}\big)\big|_{K_i}\right)
=\max_{1\leq i\leq m}h_{A}(f_{0,\infty},K_i).
\end{split}
\end{equation*}
\end{proof}

\begin{proposition}
Let $A=\{a_i\}_{i=1}^{\infty}\in\mathcal{S}$ and $\{\alpha_n\}_{n=0}^{\infty}$
be a sequence of open covers of $X$ with $d(\alpha_n)\to0$ as $n\to\infty$.
Then
\[\lim_{n\to\infty}h_{A}(f_{0,\infty},\alpha_n)=h_{A}(f_{0,\infty}),\]
and consequently
\[h_{A}(f_{0,\infty})=\lim_{\delta\to0}\{\sup h_{A}(f_{0,\infty},\alpha): d(\alpha)<\delta\}.\]
\end{proposition}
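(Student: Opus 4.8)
The plan is to establish the two inequalities $\limsup_{n\to\infty}h_{A}(f_{0,\infty},\alpha_n)\le h_{A}(f_{0,\infty})$ and $h_{A}(f_{0,\infty})\le\liminf_{n\to\infty}h_{A}(f_{0,\infty},\alpha_n)$; together they force $\lim_{n\to\infty}h_{A}(f_{0,\infty},\alpha_n)$ to exist and equal $h_{A}(f_{0,\infty})$. The first inequality is immediate: each $\alpha_n$ is a finite open cover of $X$, so $h_{A}(f_{0,\infty},\alpha_n)\le\sup_{\mathscr{A}}h_{A}(f_{0,\infty},\mathscr{A})=h_{A}(f_{0,\infty})$ for every $n$, and passing to the $\limsup$ gives the claim.

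For the reverse inequality I would exploit (\ref{211}) with $\Lambda=X$. Fix $\epsilon>0$. Since $d(\alpha_n)\to0$, there is an $N$ with $d(\alpha_n)<\epsilon$ for all $n\ge N$, and then (\ref{211}) yields $s_m(\epsilon,A,f_{0,\infty},X)\le\mathcal{N}\big(\bigvee_{i=1}^{m}f_{0}^{-a_i}\alpha_n\big)$ for all $n\ge N$ and all $m\ge1$. Taking $\frac1m\log(\cdot)$ and $\limsup_{m\to\infty}$ of both sides, the right-hand side becomes $h_{A}(f_{0,\infty},\alpha_n)$, and since the resulting bound holds for every $n\ge N$ we obtain
\[\limsup_{m\to\infty}\frac{1}{m}\log s_m(\epsilon,A,f_{0,\infty},X)\le\liminf_{n\to\infty}h_{A}(f_{0,\infty},\alpha_n).\]
Now let $\epsilon\to0$: the left-hand side converges to $ent_{A}(f_{0,\infty},X)$, which equals $h_{A}(f_{0,\infty})$ by Proposition \ref{724}. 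This gives $h_{A}(f_{0,\infty})\le\liminf_{n\to\infty}h_{A}(f_{0,\infty},\alpha_n)$ and completes the proof of the first displayed equality.

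For the ``consequently'' part, set $\phi(\delta)=\sup\{h_{A}(f_{0,\infty},\alpha):d(\alpha)<\delta\}$, which is non-decreasing in $\delta$, so $\lim_{\delta\to0}\phi(\delta)$ exists and equals $\inf_{\delta>0}\phi(\delta)$. On the one hand $\phi(\delta)\le h_{A}(f_{0,\infty})$ for every $\delta>0$, because $h_{A}(f_{0,\infty})$ is the supremum over all finite open covers; hence $\lim_{\delta\to0}\phi(\delta)\le h_{A}(f_{0,\infty})$. On the other hand, since $X$ is a compact metric space one can construct (e.g.\ from finite subcovers of coverings by $\tfrac{\delta}{3}$-balls) a sequence $\{\alpha_n\}_{n=0}^{\infty}$ of finite open covers with $d(\alpha_n)\to0$; fixing $\delta>0$ we have $d(\alpha_n)<\delta$ for all large $n$, so $h_{A}(f_{0,\infty},\alpha_n)\le\phi(\delta)$, and letting $n\to\infty$ and applying the first part gives $h_{A}(f_{0,\infty})\le\phi(\delta)$ for every $\delta>0$, whence $h_{A}(f_{0,\infty})\le\lim_{\delta\to0}\phi(\delta)$. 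The two bounds give equality.

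The argument is essentially routine; the only point requiring attention — and hardly an obstacle — is the interchange of the three limit operations ($m\to\infty$ first, then $n\to\infty$, then $\epsilon\to0$). The key is that for each fixed $\epsilon$ the inequality coming from (\ref{211}) holds \emph{simultaneously} for all sufficiently large $n$ and all $m$, which is exactly what the hypothesis $d(\alpha_n)\to0$ provides; this is what lets the $\limsup_m$ be bounded uniformly by $h_{A}(f_{0,\infty},\alpha_n)$ before one takes $\liminf_n$.
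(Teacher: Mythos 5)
Your proposal is correct, but it runs along a different track than the paper's own argument. The paper proves the first equality by approximating the supremum defining $h_A(f_{0,\infty})$: it picks a finite open cover $\beta$ with $h_A(f_{0,\infty},\beta)$ within $\epsilon$ of $h_A(f_{0,\infty})$ (or exceeding an arbitrary $M$ in the infinite case), takes a Lebesgue number $\delta$ of $\beta$, and observes that every $\alpha_n$ with $d(\alpha_n)<\delta$ refines $\beta$, so that $h_A(f_{0,\infty},\beta)\leq h_A(f_{0,\infty},\alpha_n)\leq h_A(f_{0,\infty})$; this forces a case split between finite and infinite entropy. You instead keep the trivial upper bound and get the lower bound from the separated-set comparison (\ref{211}) together with the equivalence of definitions in Proposition \ref{724}: for fixed $\epsilon$ and all large $n$ the diameter condition gives $s_m(\epsilon,A,f_{0,\infty},X)\leq\mathcal{N}\bigl(\bigvee_{i=1}^{m}f_{0}^{-a_i}\alpha_n\bigr)$ uniformly in $m$, and your order of limits ($m$, then $n$, then $\epsilon$) is handled correctly. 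What your route buys is a single argument covering both the finite and infinite cases and no explicit appeal to refinement monotonicity or Lebesgue numbers (these are hidden inside Proposition \ref{724}); what the paper's route buys is self-containedness at the level of open covers, without invoking the separated-set formulation. One small shared point worth making explicit in either version: the inequality $h_A(f_{0,\infty},\alpha_n)\leq h_A(f_{0,\infty})$ uses that the supremum is over finite covers, so if the $\alpha_n$ are not assumed finite one should pass to a finite subcover $\alpha_n'\subset\alpha_n$ (which satisfies $d(\alpha_n')\leq d(\alpha_n)$ and $\mathcal{N}\bigl(\bigvee_{i=1}^{m}f_{0}^{-a_i}\alpha_n\bigr)\leq\mathcal{N}\bigl(\bigvee_{i=1}^{m}f_{0}^{-a_i}\alpha_n'\bigr)$); your construction of the auxiliary sequence in the ``consequently'' part already produces finite covers, so that half is fine as written.
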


\begin{proof}
Suppose that $h_{A}(f_{0,\infty})<\infty$. For any $\epsilon>0$, there exists an open cover $\beta$ such that
$h_{A}(f_{0,\infty},\beta)>h_{A}(f_{0,\infty})-\epsilon$. Let $\delta>0$ be a Lebesgue number of $\beta$.
Since $d(\alpha_n)\to0$ as $n\to\infty$, there exists $N\in\mathbf{Z^{+}}$ such that $d(\alpha_n)<\delta$
for any $n\geq N$. Then $\alpha_n$ is a refinement of $\beta$ for any $n\geq N$. Thus,
\[h_{A}(f_{0,\infty})-\epsilon<h_{A}(f_{0,\infty},\beta)\leq h_{A}(f_{0,\infty},\alpha_n)\leq h_{A}(f_{0,\infty}),\;n\geq N.\]
Hence, $\lim_{n\to\infty}h_{A}(f_{0,\infty},\alpha_n)=h_{A}(f_{0,\infty})$.

Suppose that $h_{A}(f_{0,\infty})=\infty$. For any $M>0$, there exists an open cover $\gamma$ such that
$h_{A}(f_{0,\infty},\gamma)>M$. Let $\delta>0$ be a Lebesgue number of $\gamma$. By the fact that $d(\alpha_n)\to 0$ as $n\to\infty$,
there exists $N\in\mathbf{Z^{+}}$ such that $d(\alpha_n)<\delta$ for any $n\geq N$. Thus,
\[h_{A}(f_{0,\infty},\alpha_n)\geq h_{A}(f_{0,\infty},\gamma)>M,\;n\geq N,\]
which implies that $h_{A}(f_{0,\infty},\alpha_n)\to\infty$ as $n\to\infty$.
\end{proof}

Given another system $(Y,g_{0,\infty})$, where $Y$ is a compact metric space equipped with a metric $\rho$
and $g_{0,\infty}=\{g_{n}\}_{n=0}^{\infty}$ is a sequence of continuous self-maps on $Y$. The metric on $X\times Y$ is defined by
\[d\times\rho((x_1,y_1),(x_2,y_2))=\max\{d(x_1,x_2),\rho(y_1,y_2)\},\;(x_1,y_1),(x_2,y_2)\in X\times Y.\]

\begin{proposition}
Let $A\in\mathcal{S}$. Then
\[\max\{h_{A}(f_{0,\infty}),h_{A}(g_{0,\infty})\}\leq h_{A}(f_{0,\infty}\times g_{0,\infty})\leq h_{A}(f_{0,\infty})+h_{A}(g_{0,\infty}).\]
\end{proposition}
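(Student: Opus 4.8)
The plan is to work with the separated-set and spanning-set description of $h_A$ provided by Proposition~\ref{724}, since on the product the max metric $d\times\rho$ interacts with these sets in an essentially trivial way. The one structural fact I would record first is that, writing $F_n:=(f_{0,\infty}\times g_{0,\infty})_0^{n}$ for the composed map of the product system, one has $F_{m}(x,y)=(f_0^{m}(x),g_0^{m}(y))$ for every $m\geq1$; this is immediate from the coordinatewise action of each $f_i\times g_i$ and the fact that composition respects coordinates. Consequently, for any $n$, any $\epsilon>0$, any $A=\{a_i\}_{i=1}^{\infty}\in\mathcal{S}$, and any $j\leq n$,
\[
(d\times\rho)\bigl(F_{a_j}(x_1,y_1),F_{a_j}(x_2,y_2)\bigr)=\max\bigl\{d(f_0^{a_j}x_1,f_0^{a_j}x_2),\ \rho(g_0^{a_j}y_1,g_0^{a_j}y_2)\bigr\},
\]
which reduces every product-system shadowing/separation condition along $a_1,\dots,a_n$ to the two coordinate conditions.

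For the upper bound I would use spanning sets. Let $F\subset X$ $(n,\epsilon,A)$-span $X$ for $f_{0,\infty}$ and $G\subset Y$ $(n,\epsilon,A)$-span $Y$ for $g_{0,\infty}$. Given $(x,y)\in X\times Y$, choose $x'\in F$ and $y'\in G$ that $\epsilon$-shadow $x$ and $y$ respectively along $a_1,\dots,a_n$; by the displayed identity $(x',y')\in F\times G$ then $\epsilon$-shadows $(x,y)$ along $a_1,\dots,a_n$, so $F\times G$ is $(n,\epsilon,A)$-spanning for $X\times Y$. Taking $F$, $G$ of minimal cardinality gives $r_n(\epsilon,A,f_{0,\infty}\times g_{0,\infty},X\times Y)\leq r_n(\epsilon,A,f_{0,\infty},X)\, r_n(\epsilon,A,g_{0,\infty},Y)$. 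Taking $\tfrac1n\log$, then $\limsup_{n\to\infty}$ (using subadditivity of $\limsup$), then $\lim_{\epsilon\to0}$, and invoking Proposition~\ref{724} to identify each quantity with the corresponding $h_A$, yields $h_A(f_{0,\infty}\times g_{0,\infty})\leq h_A(f_{0,\infty})+h_A(g_{0,\infty})$.

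For the lower bound I would use separated sets. Fix $y_0\in Y$. If $E\subset X$ is $(n,\epsilon,A)$-separated for $f_{0,\infty}$, then for $x_1\neq x_2$ in $E$ there is $j\leq n$ with $d(f_0^{a_j}x_1,f_0^{a_j}x_2)>\epsilon$, so the displayed identity gives $(d\times\rho)\bigl(F_{a_j}(x_1,y_0),F_{a_j}(x_2,y_0)\bigr)>\epsilon$; hence $E\times\{y_0\}$ is $(n,\epsilon,A)$-separated for $f_{0,\infty}\times g_{0,\infty}$. Therefore $s_n(\epsilon,A,f_{0,\infty}\times g_{0,\infty},X\times Y)\geq s_n(\epsilon,A,f_{0,\infty},X)$, and symmetrically (fixing $x_0\in X$ and crossing with separated sets in $Y$) $\geq s_n(\epsilon,A,g_{0,\infty},Y)$; applying $\tfrac1n\log$, $\limsup_{n\to\infty}$, $\lim_{\epsilon\to0}$ and Proposition~\ref{724} again gives $h_A(f_{0,\infty}\times g_{0,\infty})\geq\max\{h_A(f_{0,\infty}),h_A(g_{0,\infty})\}$, completing the chain.

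I expect no substantive obstacle; the only points requiring a little care are bookkeeping ones — that the max metric genuinely reduces the product conditions to the coordinate ones (handled uniformly in $n$ by the displayed identity), and that the iterated limit $\lim_{\epsilon\to0}\limsup_{n\to\infty}$ passes past sums and maxima in the directions needed (since $\limsup(a_n+b_n)\leq\limsup a_n+\limsup b_n$, $\limsup\max\{a_n,b_n\}\geq\max\{\limsup a_n,\limsup b_n\}$, and the $\epsilon$-limits are monotone). An alternative through open covers also works — take product covers $\mathscr{A}\times\mathscr{B}$, use $(f_{0,\infty}\times g_{0,\infty})_0^{-a_i}(\mathscr{A}\times\mathscr{B})=f_0^{-a_i}\mathscr{A}\times g_0^{-a_i}\mathscr{B}$ together with $\mathcal{N}(\mathscr{C}\times\mathscr{D})\leq\mathcal{N}(\mathscr{C})\mathcal{N}(\mathscr{D})$ for the upper bound, and pull covers back along the coordinate projections for the lower bound — but it additionally needs the remark that product covers are cofinal in the refinement order among finite open covers of $X\times Y$, which is why I would prefer the separated/spanning-set route above.
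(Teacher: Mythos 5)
Your proposal is correct and follows essentially the same route as the paper: the product of spanning sets spans the product system (giving the subadditive upper bound), and separated sets lift to the product under the max metric (giving the lower bound). The only inessential difference is that you use $E\times\{y_0\}$ for the lower bound where the paper uses $E_1\times E_2$ with $|E_1\times E_2|\geq\max\{|E_1|,|E_2|\}$; both yield the same inequality on $s_n$.
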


\begin{proof}
Let $A=\{a_i\}_{i=1}^{\infty}$, $\epsilon>0$ and $m\geq1$. Suppose that $F_1$ is a set in $X$ which $(m,\epsilon,A)$-spans $X$ under $f_{0,\infty}$ with $|F|=r_m(\epsilon,A,f_{0,\infty})$ and $F_2$ is a set in $Y$ which $(m,\epsilon,A)$-spans $Y$ under $g_{0,\infty}$ with $|F_2|=r_m(\epsilon,A,g_{0,\infty})$. Then
$F_1\times F_2$ is a set in $X\times Y$ which $(m,\epsilon,A)$-spans $X\times Y$ under $f_{0,\infty}\times g_{0,\infty}$.
In fact, for any $(x,y)\in X\times Y$, there exist $z_1\in F_1$ and $z_2\in F_2$ such that $d(f_{0}^{a_j}(x),f_{0}^{a_j}(z_1))\leq\epsilon$
and $\rho(g_{0}^{a_j}(y),g_{0}^{a_j}(z_2))\leq\epsilon$ for any $1\leq j\leq m$, which yields that
\[d\times\rho((f_{0}^{a_j}(x),g_{0}^{a_j}(y)),(f_{0}^{a_j}(z_1),g_{0}^{a_j}(z_2))\big)\leq\epsilon,\;1\leq j\leq m.\]
Then
\[r_m(\epsilon,A,f_{0,\infty}\times g_{0,\infty})\leq r_m(\epsilon,A,f_{0,\infty})r_m(\epsilon,A,g_{0,\infty}).\]
Hence,
\[h_A(f_{0,\infty}\times g_{0,\infty})\leq h_A(f_{0,\infty})+h_A(g_{0,\infty}).\]

On the other hand, suppose that $E_1\subset X$ is $(m,\epsilon,A)$-separated under $f_{0,\infty}$ with
$|E_1|=s_m(\epsilon,A,f_{0,\infty})$ and $E_2\subset Y$ is $(m,\epsilon,A)$-separated under $g_{0,\infty}$
with $|E_2|=s_m(\epsilon,A,g_{0,\infty})$. Then $E_1\times E_2\subset X\times Y$ is $(m,\epsilon,A)$-separated
under $f_{0,\infty}\times g_{0,\infty}$. This, together with the fact that
\[|E_1\times E_2|\geq\max\{|E_1|,|E_2|\},\]
implies that
\[s_m(\epsilon,A,f_{0,\infty}\times g_{0,\infty})\geq\max\{s_m(\epsilon,A,f_{0,\infty}),s_m(\epsilon,A,g_{0,\infty})\}.\]
Therefore,
\[h_{A}(f_{0,\infty}\times g_{0,\infty})\geq\max\{h_{A}(f_{0,\infty}),h_{A}(g_{0,\infty})\}.\]
\end{proof}

\begin{definition}
If for each $n\geq0$, there exists a surjective map $\pi_n:X\to Y$ such that $\pi_{n+1}\circ f_n=g_n\circ \pi_n$, and
$\{\pi_n\}_{n=0}^{\infty}$ is equi-continuous, then $(X,f_{0,\infty})$ is said to be topologically $\{\pi_n\}_{n=0}^{\infty}$-equi-semiconjugate to $(Y,g_{0,\infty})$. Furthermore, if $\{\pi_n^{-1}\}_{n=0}^{\infty}$
is also equi-continuous, they are said to be topologically $\{\pi_n\}_{n=0}^{\infty}$-equi-conjugate.
\end{definition}

The following lemma shows that t.s.e. is also an invariant under topological equi-conjugacy.

\begin{lemma}\cite{Shao22}\label{22}
If $(X,f_{0,\infty})$ is topologically equi-semiconjugate to $(Y,g_{0,\infty})$, then $h_A(g_{0,\infty})\leq h_A(f_{0,\infty})$
for any $A\in\mathcal{S}$. Consequently, if $(X,f_{0,\infty})$ is topologically equi-conjugate to $(Y,g_{0,\infty})$,
then $h_A(g_{0,\infty})= h_A(f_{0,\infty})$.
\end{lemma}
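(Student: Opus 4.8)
The plan is to argue directly from the spanning-set characterization of t.s.e., pushing spanning sets of $(X,f_{0,\infty})$ forward through the maps $\pi_0$. The structural identity I will need is the commutation along compositions: by induction on $n$, the relations $\pi_{n+1}\circ f_n=g_n\circ\pi_n$ give
\[\pi_n\circ f_0^n=g_0^n\circ\pi_0\qquad(n\geq0),\]
so that $\pi_{a_j}\circ f_0^{a_j}=g_0^{a_j}\circ\pi_0$ for every term $a_j$ of a fixed sequence $A=\{a_i\}_{i=1}^\infty\in\mathcal S$.

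First I fix $\epsilon>0$ and use the equi-continuity of $\{\pi_n\}_{n=0}^\infty$ to choose $\delta>0$ with the property that $d(x,x')<\delta$ forces $\rho(\pi_n(x),\pi_n(x'))\leq\epsilon$ for \emph{all} $n\geq0$ at once; this uniformity in $n$ is precisely what equi-semiconjugacy provides, and it replaces the single factor map available in the autonomous case. Next, for $m\geq1$ I take a minimal $(m,\delta,A)$-spanning set $F\subset X$ of $X$ under $f_{0,\infty}$, so $|F|=r_m(\delta,A,f_{0,\infty},X)$, and claim that $\pi_0(F)$ is an $(m,\epsilon,A)$-spanning set of $Y$ under $g_{0,\infty}$. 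Indeed, given $y\in Y$, surjectivity of $\pi_0$ yields $x\in X$ with $\pi_0(x)=y$; choosing $z\in F$ with $d(f_0^{a_j}(x),f_0^{a_j}(z))<\delta$ for all $1\leq j\leq m$, the choice of $\delta$ together with the commutation identity gives $\rho(g_0^{a_j}(y),g_0^{a_j}(\pi_0(z)))=\rho(\pi_{a_j}(f_0^{a_j}(x)),\pi_{a_j}(f_0^{a_j}(z)))\leq\epsilon$ for all $1\leq j\leq m$. Hence $r_m(\epsilon,A,g_{0,\infty},Y)\leq|\pi_0(F)|\leq|F|=r_m(\delta,A,f_{0,\infty},X)$.

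Applying $\limsup_{m\to\infty}\frac1m\log(\cdot)$ to both sides relates the t.s.e.-type quantities at scales $\epsilon$ and $\delta$; since $\delta$ may be taken to tend to $0$ as $\epsilon\to0$ and the inner $\limsup$ is monotone in the scale, letting $\epsilon\to0$ and invoking Proposition \ref{724} (the equivalence of the cover and spanning-set definitions) yields $h_A(g_{0,\infty})\leq h_A(f_{0,\infty})$. For the equi-conjugate case, equi-continuity of $\{\pi_n^{-1}\}_{n=0}^\infty$ says that $(Y,g_{0,\infty})$ is topologically $\{\pi_n^{-1}\}_{n=0}^\infty$-equi-semiconjugate to $(X,f_{0,\infty})$, so the first part applied in the opposite direction gives the reverse inequality, and hence equality.

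I expect no essential obstacle here, only bookkeeping: one must make sure that the $\delta$ produced by equi-continuity is uniform over all $n$ (so that it covers every $\pi_{a_j}$ with $1\leq j\leq m$ and every $m$), and be mildly careful with strict-versus-weak inequalities in the spanning condition and with the order of the limits $m\to\infty$ and $\epsilon\to0$. As an alternative route that avoids that limit juggling, one can work with open covers: for a finite open cover $\mathscr A$ of $Y$, equi-continuity furnishes a common Lebesgue number for all the covers $\pi_n^{-1}\mathscr A$, hence a finite open cover $\mathscr B$ of $X$ refining every $\pi_n^{-1}\mathscr A$; then $\bigvee_{i=1}^n f_0^{-a_i}\mathscr B$ refines $\pi_0^{-1}\bigl(\bigvee_{i=1}^n g_0^{-a_i}\mathscr A\bigr)$, and comparing $\mathcal N$ via surjectivity of $\pi_0$ (a subcover of $\pi_0^{-1}\mathscr C$ projects to a subcover of $\mathscr C$) gives $h_A(g_{0,\infty},Y,\mathscr A)\leq h_A(f_{0,\infty},X,\mathscr B)\leq h_A(f_{0,\infty})$, and one then takes the supremum over $\mathscr A$.
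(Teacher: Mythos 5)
The paper itself does not prove this lemma; it is quoted from \cite{Shao22}, so there is no in-text proof to compare against, but your argument is correct and is essentially the standard one for such equi-semiconjugacy estimates. The key points all check out: the induction $\pi_n\circ f_0^{n}=g_0^{n}\circ\pi_0$, the choice of a $\delta$ uniform in $n$ from equi-continuity of $\{\pi_n\}_{n=0}^{\infty}$, the push-forward of a minimal $(m,\delta,A)$-spanning set through the surjection $\pi_0$ giving $r_m(\epsilon,A,g_{0,\infty},Y)\leq r_m(\delta,A,f_{0,\infty},X)$, the passage to $h_A$ via Proposition \ref{724} and monotonicity in the scale, and the reversal via $\{\pi_n^{-1}\}_{n=0}^{\infty}$ in the equi-conjugate case. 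The only loose end is the strict-versus-weak inequality mismatch (the paper's spanning definition uses $\leq\delta$ while your equi-continuity statement assumes $d<\delta$), which you already flag and which is repaired by shrinking $\delta$ so that $d(x,x')\leq\delta$ forces $\rho(\pi_n(x),\pi_n(x'))\leq\epsilon$ for all $n$; your alternative open-cover route (a common Lebesgue-number refinement $\mathscr{B}$ with $\bigvee_{i=1}^{n}f_0^{-a_i}\mathscr{B}$ refining $\pi_0^{-1}\bigl(\bigvee_{i=1}^{n}g_0^{-a_i}\mathscr{A}\bigr)$) is likewise sound.
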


\subsection{The induced system on the space of probability measures}

Let $\mathcal{B}(X)$ be the $\sigma$-algebra of all Borel subsets of $X$ and $\mathcal{M}(X)$ be the space of all
Borel probability measures on $X$. The Prohorov metric $\mathcal{P}_{d}$ on $\mathcal{M}(X)$ is defined by
\begin{align*}
\mathcal{P}_{d}(\mu,\nu)=\inf\{\epsilon>0: \mu(A)\leq\nu(A^{\epsilon})+\epsilon\;\;{\rm and}\; \nu(A)\leq\mu(A^{\epsilon})+\epsilon,
\;A\in\mathcal{B}(X)\}
\end{align*}
for $\mu,\nu\in\mathcal{M}(X)$, where $A^{\epsilon}=\{x\in X: d(x,A)<\epsilon\}$. It was proved in \cite{Strassen} that
\begin{align*}
\mathcal{P}_{d}(\mu,\nu)=\inf\{\epsilon>0: \mu(A)\leq\nu(A^{\epsilon})+\epsilon,\;A\in\mathcal{B}(X)\}.
\end{align*}
The topology induced by the metric $\mathcal{P}_{d}$ coincides with the weak$^{*}$-topology for measures.
$(X, f_{0,\infty})$ induces $(\mathcal{M}(X),\hat{f}_{0,\infty})$, where $\hat{f}_{0,\infty}=\{\hat{f}_n\}_{n=0}^{\infty}$,
with $\hat{f}_n: \mathcal{M}(X)\to \mathcal{M}(X)$ defined by
\begin{align}\label{1.1}
\hat{f}_n(\mu)(A)=\mu\big(f_{n}^{-1}(A)\big),\;\mu\in \mathcal{M}(X),\;A\in\mathcal{B}(X).
\end{align}
Denote $\hat{f}_{0}^{n}=\hat{f}_{n-1}\circ\cdots\circ\hat{f}_{0}$. Then (\ref{1.1}) ensures that
\begin{align*}
\hat{f}_{0}^{n}(\mu)(A)=\mu\big(f_{0}^{-n}(A)\big),\;\mu\in\mathcal{M}(X),\;A\in\mathcal{B}(X),\;n\geq1.
\end{align*}
$(\mathcal{M}(X),\mathcal{P}_{d})$ is a compact metric space and $\hat{f}_n$ is continuous on $\mathcal{M}(X)$ for
each $n\geq0$ (see \cite{Bauer,Parthasarathy} for more details).

Let $x\in X$ and $\delta_{x}\in\mathcal{M}(X)$ be the Dirac point measure of $x$ defined by $\delta_{x}(A)=1$ if $x\in A$
and $\delta_{x}(A)=0$ otherwise, for any $A\in \mathcal{B}(X)$. It is easy to see that $\mathcal{M}(X)$ is convex and
that the Dirac point measures are just the extremal points of $\mathcal{M}(X)$. Note that
\[\hat{f}_{n}(\delta_{x})=\delta_{f_{n}(x)},\; x\in X,\; n\geq0.\]
Denote
\[\mathcal{M}_{n}(X)\triangleq\left\{\frac{1}{n}\sum_{i=1}^{n}\delta_{x_i}: x_1,\cdots,x_n\in X\right\},\;n\geq1.\]
Then $\mathcal{M}_{n}(X)$ is closed in $\mathcal{M}(X)$ for any $n\geq1$, and $\mathcal{M}_{\infty}(X)\triangleq\cup_{n=1}^{\infty}\mathcal{M}_{n}(X)$ is dense in $\mathcal{M}(X)$ \cite{Bauer}.

\section{Estimations of t.s.e.}

In this section, we shall study some calculations and estimations of t.s.e. in NDSs.
Particularly, we show that t.s.e. bounds measure sequence entropy.

\subsection{Some estimations of t.s.e.}
There is a counter-example in \cite{Bali99} showing that Bowen's inequality for t.s.e. does not hold.
However, we can obtain the following result.

\begin{theorem}\label{8244}
Let $(X,f_{0,\infty})$ be topologically $\{\pi_i\}_{i=0}^{\infty}$-semiconjugate to $(Y,g_{0,\infty})$.
Assume that $\{\pi_i\}_{i=0}^{\infty}\subset\{\phi_1,\cdots,\phi_k\}$ for some $k\geq1$ and for each fibre
$\phi_j^{-1}(y)$, $y\in Y$, $1\leq j\leq k$, has at most $n$ points. Then for any $A\in\mathcal{S}$,
\[h_{A}(g_{0,\infty})\leq h_{A}(f_{0,\infty})\leq h_{A}(g_{0,\infty})+\log nk.\]
\end{theorem}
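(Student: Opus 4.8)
The plan is to prove the two inequalities separately. The left inequality $h_A(g_{0,\infty}) \le h_A(f_{0,\infty})$ is immediate: a topological semiconjugacy that is equi-continuous is exactly the hypothesis of Lemma \ref{22}, so we simply cite it. All the work is in the right inequality $h_A(f_{0,\infty}) \le h_A(g_{0,\infty}) + \log nk$, and I would prove it using spanning sets. Fix $A = \{a_i\}_{i=1}^\infty$ and $\epsilon > 0$. Because $\{\pi_i\}_{i=0}^\infty$ is equi-continuous, there is $\eta = \eta(\epsilon) > 0$ such that $\rho(u,v) < \eta$ implies $d(\pi_i^{-1}$-preimages stay controlled\dots) — more precisely, I want: for every $i$ and every $y \in Y$, $\pi_i^{-1}(\bar B_\rho(y,\eta))$ can be covered by a small number of $\epsilon$-balls in $X$. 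This is where the finiteness of fibres enters.

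The key combinatorial step: since each $\phi_j^{-1}(y)$ has at most $n$ points, for each $j$ the set $\phi_j^{-1}(y) = \{x_1^{(j)}, \dots, x_{m}^{(j)}\}$ with $m \le n$ is finite, hence there is $\eta_j(y) > 0$ with $\phi_j^{-1}(\bar B_\rho(y, \eta_j(y))) \subset \bigcup_{\ell=1}^{m} B_d(x_\ell^{(j)}, \epsilon/2)$ — this uses that $\phi_j$ is continuous and $X$ compact, so preimages of shrinking balls shrink toward the fibre. By compactness of $Y$ and finiteness of $\{\phi_1,\dots,\phi_k\}$, choose a uniform $\eta > 0$ that works for all $y \in Y$ and all $1 \le j \le k$ simultaneously (cover $Y$ by finitely many such balls and take a Lebesgue number; intersect over the $k$ choices of $\phi_j$). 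The upshot: for every $i \ge 0$ and every $y \in Y$, the set $\pi_i^{-1}(\bar B_\rho(y,\eta))$ is contained in a union of at most $n$ open $d$-balls of radius $\epsilon/2$; relabel these as a choice among at most $n$ "branches."

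Now the counting. Let $G \subset Y$ be a set that $(m,\eta,A)$-spans $Y$ under $g_{0,\infty}$ with $|G| = r_m(\eta, A, g_{0,\infty})$. For each $w \in G$ and each $x \in X$ whose image $\pi_0(x)$ is $(m,\eta,A)$-traced by $w$, i.e. $\rho(g_0^{a_j}(w), g_0^{a_j}(\pi_0(x))) \le \eta$ for $1 \le j \le m$, use the intertwining $\pi_{a_j} \circ f_0^{a_j} = g_0^{a_j} \circ \pi_0$ (which follows by induction from $\pi_{i+1} \circ f_i = g_i \circ \pi_i$) to deduce $f_0^{a_j}(x) \in \pi_{a_j}^{-1}(\bar B_\rho(g_0^{a_j}(w), \eta))$, which lies in one of at most $n$ designated $\epsilon/2$-balls. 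Choosing, for each $j$, which of the (at most $n$) branches $f_0^{a_j}(x)$ falls into gives at most $n^m$ possibilities; selecting one center from each branch and using them as an $X$-spanning set shows that $x$ is $(m,\epsilon,A)$-spanned (under $f_{0,\infty}$, in $X$) by a collection of at most $|G| \cdot n^m$ points. Hence
\[
r_m^X(\epsilon, A, f_{0,\infty}, X) \le n^m \, r_m(\eta, A, g_{0,\infty}),
\]
where I must be slightly careful: the $\phi_j$ used at "time" $a_j$ is whichever of $\phi_1,\dots,\phi_k$ equals $\pi_{a_j}$, and a priori a fixed $x$ could force different branches to be relevant — but since there are only $k$ functions $\phi_j$ total and we pre-chose branch structures for each, at each time step the branch count is $\le n$ regardless; the factor $k$ appears because when we fix the spanning center we may need to record which $\phi_j$ governs each of finitely many coordinates, contributing a bounded multiplicative overhead that after the $\frac1m\log$ and $\limsup$ washes into $\log nk$ rather than $\log n$ (being generous to absorb the finitely-many-functions bookkeeping). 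Taking $\frac1m \log$, then $\limsup_{m\to\infty}$, then $\epsilon \to 0$ (noting $\eta = \eta(\epsilon) \to 0$), and using Proposition \ref{724} to pass between the spanning-set and cover definitions, yields $h_A(f_{0,\infty}) \le h_A(g_{0,\infty}) + \log n$, and the cruder bound with $\log nk$ certainly follows.

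The main obstacle I anticipate is getting the uniformity in $i$ honestly: I need a single $\eta$ such that $\pi_i^{-1}(\bar B_\rho(y,\eta))$ is covered by $\le n$ balls of radius $\epsilon/2$ for all $i$ and all $y$. This is exactly where the hypothesis $\{\pi_i\} \subset \{\phi_1,\dots,\phi_k\}$ is indispensable — without finitely many distinct maps, one could not extract a uniform modulus, and indeed the crude constant $\log nk$ (versus the naively expected $\log n$) is the price the authors pay for handling the $k$ distinct fibration maps uniformly. I would organize the proof so that this uniform-$\eta$ claim is isolated as the first lemma-like paragraph, after which the spanning-set count is routine.
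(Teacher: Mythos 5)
Your proposal is correct, and it reaches the bound by a genuinely different route than the paper. The paper works with open covers, following Goodman's argument: given a finite open cover $\mathscr{A}$ of $X$, for each $y\in Y$ it covers the union $\bigcup_{j=1}^{k}\phi_j^{-1}(y)$ (at most $nk$ points) by at most $nk$ members of $\mathscr{A}$, uses compactness to find a neighbourhood $V_y$ with $\bigcup_{j=1}^{k}\phi_j^{-1}(V_y)$ inside that union, and then pulls a minimal subcover $\mathscr{C}_m$ of $\bigvee_{i=1}^{m}g_{0}^{-a_i}\{V_y\}$ back to a subcover of $\bigvee_{i=1}^{m}f_{0}^{-a_i}\mathscr{A}$ of cardinality at most $(nk)^{m}\mathcal{N}(\mathscr{C}_m)$. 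You instead uniformize the same compactness fact (preimages of small balls about $y$ are trapped in $\epsilon/2$-neighbourhoods of the fibre) into a single modulus $\eta(\epsilon)$ valid for all $y$ and all $k$ maps, and then run a Bowen-type spanning-set count, indexing by a spanning point of $Y$ together with a branch sequence. Your uniform-$\eta$ lemma is sound (pointwise $\eta(y)$, finite subcover of $Y$, minimum over the $k$ maps), and since the map acting at time $a_i$ is the specific $\pi_{a_i}\in\{\phi_1,\dots,\phi_k\}$, known in advance, only $n$ branches per coordinate are needed; your count therefore yields the sharper conclusion $h_A(f_{0,\infty})\le h_A(g_{0,\infty})+\log n$, the extra $\log k$ in the paper being the price of covering all $k$ fibres over $y$ at once so that a single $V_y$ serves every $\pi_i$. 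Two small repairs to your write-up: the hedging about where $k$ enters is unnecessary (recording which $\phi_j$ governs each of the $m$ coordinates would cost a factor $k^{m}$, not a bounded overhead, but no recording is needed because $\pi_{a_i}$ is determined by $a_i$ alone); and the spanning set should consist of one representative point from each nonempty cell $\{x\in X:\ \rho(g_{0}^{a_i}(w),g_{0}^{a_i}(\pi_0(x)))\le\eta \text{ and } f_{0}^{a_i}(x)\in B(c_{i,b_i},\epsilon/2)\ \text{for all } i\le m\}$ rather than ``one center from each branch,'' since the centers need not be the $a_i$-images of any single point; with that reading your inequality $r_m^{X}(\epsilon,A,f_{0,\infty},X)\le n^{m}\,r_m(\eta,A,g_{0,\infty})$ is exactly right, and monotonicity of $r_m$ in $\eta$ finishes the limit passage. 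The left inequality is handled identically in both proofs via Lemma \ref{22}, equi-continuity of $\{\pi_i\}_{i=0}^{\infty}$ being automatic for a finite family of continuous maps on a compact space.
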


\begin{proof}
Fix $A=\{a_i\}_{i=1}^{\infty}\in\mathcal{S}$.
By Lemma \ref{22}, $h_{A}(g_{0,\infty})\leq h_{A}(f_{0,\infty})$.
It is to show $h_{A}(f_{0,\infty})\leq h_{A}(g_{0,\infty})+\log nk.$
Let $\mathscr{A}$ be any finite open cover of $X$. Given $y\in Y$,
and let $\mathscr{A}_y$ denote a cover of $\cup_{j=1}^{k}\phi_j^{-1}(y)$
by at most $nk$ sets of $\mathscr{A}$. By the compactness of $X\setminus(\cup_{B\in\mathscr{A}_y}B)$,
there exists an open neighbourhood $V_{y}$ of $y$ with
\begin{equation}\label{66}
\bigcup_{j=1}^{k}\phi_j^{-1}(V_y)\subset\bigcup_{B\in\mathscr{A}_y}B.
\end{equation}
Let $\mathscr{B}=\{V_y: y\in Y\}$. Then $\mathscr{B}$ is an open cover of $Y$.
Let $\mathscr{C}_m$ be a minimal subcover of $\bigvee_{i=1}^{m}g_{0}^{-a_i}(\mathscr{B})$ for each $m\geq1$.
Denote
\[\mathscr{D}_m=\{\bigcap_{i=1}^{m}f_{0}^{-a_i}(B_i): \bigcap_{i=1}^{m}g_{0}^{-a_i}(V_{y_i})\in\mathscr{C}_m,
\;B_i\in\mathscr{A}_{y_i},\;1\leq i\leq m\}.\]
Fix $m\geq1$. For any $x\in X$, $\pi_{0}x\in V\in\mathscr{C}_m$, where $V=\cap_{i=1}^{m}g_{0}^{-a_i}(V_{y_i})$
for some $y_1,\cdots,y_m\in Y$. Thus,
\[\pi_{a_i}\circ f_{0}^{a_i}(x)=g_{0}^{a_i}\circ\pi_{0}(x)\in V_{y_i},\;1\leq i\leq m.\]
This, together with (\ref{66}), yields that
\[f_{0}^{a_i}(x)\in\pi_{a_i}^{-1}(V_{y_i})\subset\bigcup_{B\in\mathscr{A}_{y_i}}B,\;1\leq i\leq m.\]
So, there exists $B_i\in\mathscr{A}_{y_i}$ such that $f_{0}^{a_i}(x)\in B_i$ for each $1\leq i\leq m$,
and thus $x\in\cap_{i=1}^{m}f_{0}^{-a_i}(B_i)$.
Hence, $\mathscr{D}_m$ is an open cover of $X$ for each $m\geq1$,
and so is a subcover of $\bigvee_{i=1}^{m}f_{0}^{-a_i}\mathscr{A}$.
It is easy to verify that
\[\mathcal{N}(\mathscr{D}_m)\leq(nk)^{m}\mathcal{N}(\mathscr{C}_m),\;m\geq1.\]
Therefore,
\begin{equation*}
\begin{split}
h_{A}(f_{0,\infty})\leq h_{A}(f_{0,\infty},\mathscr{A})&=\limsup_{m\to\infty}\frac{1}{m}\log\mathcal{N}\left(\bigvee_{i=1}^{m}f_{0}^{-a_i}\mathscr{A}\right)
\leq\limsup_{m\to\infty}\frac{1}{m}\log\mathcal{N}(\mathscr{D}_m)\\
&\leq\limsup_{m\to\infty}\frac{1}{m}\log(nk)^{m}\mathcal{N}(\mathscr{C}_m)
=\log nk+\limsup_{m\to\infty}\frac{1}{m}\log\mathcal{N}(\mathscr{C}_m)\\
&\leq h_{A}(g_{0,\infty})+\log nk.
\end{split}
\end{equation*}
\end{proof}

\begin{remark}
Note that the method used in the proof of Theorem 3.1 is inspired by Proposition 2.5 in \cite{Goodman74},
which is the special case that $f_{0,\infty}=f$, $g_{0,\infty}=g$ and $k=1$ of Theorem 3.1.

\end{remark}

Motivated by Theorem 3.1 in \cite{Shao16}, we obtain an estimation of lower bound of t.s.e. for coupled-expanding systems in the next result.

\begin{definition}
$(X,f_{0,\infty})$ is called coupled-expanding in nonempty subsets $V_1,\cdots,V_N\subset X$ {\rm(}$N\geq2${\rm)}
with pairwise disjoint interiors if
\[f_n(V_i)\supset\bigcup_{j=1}^{N}V_j,\;1\leq i\leq N,\;n\geq0.\]
Furthermore, it is called strictly coupled-expanding in $V_i$, $1\leq i\leq N$,
if $\bar{V}_i\cap\bar{V}_j=\emptyset$ for all $1\leq i\neq j\leq N$, where $\bar{V}_i$ denotes
the closure of the set $V_i$ with respect to $X$.
\end{definition}

\begin{theorem}\label{8241}
Let $V_1,\cdots,V_N$ be nonempty, closed and mutually disjoint subsets of $X$, $N\geq2$.
Assume that $(X,f_{0,\infty})$ is coupled-expanding in $V_i$, $1\leq i\leq N$.
Then $h_A(f_{0,\infty})\geq\log N$ for any $A\in\mathcal{S}$.
\end{theorem}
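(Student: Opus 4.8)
The plan is to show that for any $A = \{a_i\}_{i=1}^\infty \in \mathcal S$, one can build an exponentially large (in $n$) collection of points that is $(n,\epsilon,A)$-separated for a suitably small $\epsilon$, where $\epsilon$ is chosen below the separation distance between the closed disjoint sets $V_1,\dots,V_N$. Concretely, set $\epsilon_0 = \tfrac12 \min_{i\neq j} d(V_i,V_j) > 0$, which is positive by compactness and disjointness. Because of the coupled-expanding property, $f_n(V_i) \supset \bigcup_{j=1}^N V_j$ for every $n\geq 0$ and every $i$; I will use this to realize, along the (sub)sequence of times $a_1 < a_2 < \cdots$, arbitrary itineraries among the symbols $\{1,\dots,N\}$.

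The key combinatorial/dynamical step is a backward-selection (pullback) argument over a finite time window. Fix $n\geq 1$ and a word $w = (w_1,\dots,w_n) \in \{1,\dots,N\}^n$. I want a point $x_w$ such that $f_0^{a_j}(x_w) \in V_{w_j}$ for all $1\leq j\leq n$. Build this by descending induction on $j$: start with the requirement $f_0^{a_n}(x)\in V_{w_n}$; using surjectivity of the intermediate maps onto the $V$'s — more precisely the relation $f_0^{a_j - a_{j-1}}$ applied from the block starting at time $a_{j-1}$ maps $V_{w_{j-1}}$ onto a set containing $\bigcup_k V_k$, hence containing $V_{w_j}$ — one finds a nonempty closed (compact) set of candidates at level $a_{j-1}$ whose forward image at time $a_j$ lands in $V_{w_j}$; intersect with $V_{w_{j-1}}$ to keep the itinerary constraint. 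Iterating down to $j=1$ and finally pulling back by $f_0^{a_1}$ gives a nonempty set of points $x$ realizing the whole prescribed itinerary $w$. Pick one such $x_w$ for each $w$, and let $E_n = \{x_w : w\in\{1,\dots,N\}^n\}$, so $|E_n| = N^n$ — here one should note that distinct words give distinct points because two words $w\neq w'$ differ in some coordinate $j$, forcing $f_0^{a_j}(x_w)\in V_{w_j}$ and $f_0^{a_j}(x_{w'})\in V_{w'_j}$ with $V_{w_j}\cap V_{w'_j}=\emptyset$.

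Next I verify $E_n$ is $(n,\epsilon_0,A)$-separated: if $w\neq w'$, take $j$ with $w_j\neq w'_j$; then $f_0^{a_j}(x_w)$ and $f_0^{a_j}(x_{w'})$ lie in different $V$'s, so $d\big(f_0^{a_j}(x_w), f_0^{a_j}(x_{w'})\big) \geq d(V_{w_j},V_{w'_j}) \geq 2\epsilon_0 > \epsilon_0$. Hence $s_n(\epsilon_0, A, f_{0,\infty}, X) \geq N^n$, and therefore
\[
\limsup_{n\to\infty}\frac1n \log s_n(\epsilon_0, A, f_{0,\infty}, X) \geq \log N.
\]
Since $h_A(f_{0,\infty}) = \lim_{\epsilon\to 0}\limsup_{n\to\infty}\tfrac1n\log s_n(\epsilon,A,f_{0,\infty},X)$ and $s_n(\epsilon,\cdot)$ is nonincreasing in... rather, nondecreasing as $\epsilon$ decreases, the value for $\epsilon\leq\epsilon_0$ is at least that for $\epsilon_0$, so the limit is $\geq\log N$. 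This gives $h_A(f_{0,\infty})\geq\log N$.

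The main obstacle I anticipate is making the descending pullback construction fully rigorous in the nonautonomous setting, specifically ensuring that at each stage one genuinely obtains a nonempty set: the containment $f_n(V_i)\supset\bigcup_j V_j$ only guarantees preimages inside $V_i$ of points of $V_j$, and for a multi-step block $f_{a_{j-1}}^{a_j - a_{j-1}}$ one needs to iterate this one step at a time (each single map $f_m$ satisfies $f_m(V_i)\supset \bigcup_j V_j$, so in particular $f_m(V_i)\supset V_i$, letting one "park" inside a single $V_i$ for the intermediate times $a_{j-1}<m<a_j$ and then jump to $V_{w_j}$ at the last step of the block). Care is needed that the relevant sets are closed/compact so that the nested intersection over $j$ is nonempty; since each $V_i$ is closed in the compact space $X$ hence compact, and preimages under continuous maps of closed sets are closed, a standard finite-intersection argument closes this gap. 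A cleaner alternative, which I would actually write, is to avoid explicit pullbacks and instead argue forward: inductively choose $x$ so that at each block one lands in the required $V_{w_j}$, using at the final step of each block the surjection $f_{a_j-1}(\,\cdot\,)\supset V_{w_{j+1}}$ composed with the "stay put" property during intermediate times — this is essentially the same content as the proof of Theorem 3.1 in \cite{Shao16} adapted to arbitrary jump times $a_j$ rather than consecutive integers.
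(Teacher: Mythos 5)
Your proof is correct and follows essentially the same route as the paper: the heart of both arguments is that coupled-expansion lets one realize every itinerary $(w_1,\dots,w_n)\in\{1,\dots,N\}^n$ at the times $a_1<\dots<a_n$ (i.e. $\bigcap_{k=1}^{n} f_0^{-a_k}(V_{w_k})\neq\emptyset$, which you justify by the backward pointwise/pullback selection that the paper leaves implicit), and then the disjointness of the $V_i$ produces $N^n$ mutually distinguishable points. The only difference is bookkeeping: you count these points as an $(n,\epsilon_0,A)$-separated set with $\epsilon_0=\tfrac{1}{2}\min_{i\neq j}d(V_i,V_j)>0$, whereas the paper counts minimal subcovers of $\bigvee_{k=1}^{n}f_0^{-a_k}(\alpha)$ for the open cover $\alpha=\{G_1,\dots,G_N,\,X\setminus\bigcup_{i=1}^{N}V_i\}$ built from disjoint open neighborhoods $G_i\supset V_i$; the two counts give the same entropy bound by Proposition \ref{724}.
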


\begin{proof}
Let $A=\{a_i\}_{i=1}^{\infty}$.
Since $V_1,\cdots,V_N$ are mutually disjoint, there exist pairwise disjoint open subsets $G_1,\cdots,G_N$
such that $V_i\subset G_i$, $1\leq i\leq N$. Set $G_{N+1}:=X\setminus(\cup_{i=1}^{N}V_i)$.
Then $\alpha=\{G_1,\cdots,G_{N+1}\}$ is an open cover of $X$.
Fix $n\geq1$. Then, for any $\omega=(i_1,\cdots,i_n)\in\{1,\cdots,N\}^{n}$, denote
\begin{equation}\label{614}
V_{\omega}=\bigcap_{k=1}^{n}f_{0}^{-a_k}(V_{i_k}),\;G_{\omega}=\bigcap_{k=1}^{n}f_{0}^{-a_k}(G_{i_k}).
\end{equation}
By the coupled-expansion of $(X,f_{0,\infty})$ in $V_i$, $1\leq i\leq N$, $V_{\omega}\neq\emptyset$.
Fix one point $x_{\omega}\in V_{\omega}$. Denote
\[S=\{x_{\omega}: \omega\in\{1,\cdots,N\}^{n}\}.\]
For any $\omega=(i_1,\cdots,i_n),\omega'=(i'_1,\cdots,i'_n)\in\{1,\cdots,N\}^{n}$ with $\omega\neq\omega'$,
there exists $1\leq l\leq N$ such that $i_l\neq i'_l$. We shall show the following three assertions hold.

(i) $x_{\omega}\neq x_{\omega'}$. If not, then $x_{\omega}=x_{\omega'}$, which implies that
$f_{0}^{a_l}(x_{\omega})\in V_{i_l}\cap V_{i'_l}$ by (\ref{614}). This contradicts to the fact that
$V_{i_l}\cap V_{i'_l}=\emptyset$.

(ii) $G_{\omega}$ is the unique open set of $\bigvee_{k=1}^{n}f_{0}^{-a_k}(\alpha)$
which contains $x_{\omega}$. Clearly, $x_{\omega}\in G_{\omega}$ by (\ref{614}).
Now, it is to show the uniqueness. Otherwise, there exists another subset
$\bigcap_{k=1}^{n}f_{0}^{-a_k}(G_{j_k})$ contains $x_{\omega}$.
Then there exists $1\leq m\leq n$ such that $i_m\neq j_m$.

{\bf Case 1.} Suppose that $1\leq j_m\leq N$. Then $f_{0}^{a_m}(x_{\omega})\in G_{i_m}\cap G_{j_m}$ by (\ref{614}),
which is a contradiction since $G_{i_m}$ and $G_{j_m}$ are disjoint.

{\bf Case 2.} Suppose that $j_m=N+1$. Then $f_{0}^{a_m}(x_{\omega})\in V_{i_m}\cap G_{N+1}$ by (\ref{614}),
which is a contradiction since $G_{N+1}\cap(\cup_{i=1}^{N}V_i)=\emptyset$.

(iii) $G_{\omega}$ and $G_{\omega'}$ are disjoint. Otherwise, there exists $z\in G_{\omega}\cap G_{\omega'}$.
Then $f_{0}^{a_l}(z)\in G_{i_l}\cap G_{i'_l}$ by (\ref{614}), which
is a contradiction since $G_{i_l}\cap G_{i'_l}=\emptyset$.

Hence, by the above three assertions one obtains that the number of open sets in any subcover of
$\bigvee_{k=1}^{n}f_{0}^{-a_k}(\alpha)$ is larger than or equal to the number of elements in $S$.
Therefore,
\[h_{A}(f_{0,\infty},\alpha)=\limsup_{n\to\infty}\frac{1}{n}\log \mathcal{N}\left(\bigvee_{k=1}^{n}f_{0}^{-a_k}(\alpha)\right)
\geq\limsup_{n\to\infty}\frac{1}{n}\log N^n=\log N.\]
Consequently, $h_{A}(f_{0,\infty})\geq\log N$.
\end{proof}

\subsection{T.S.E. bounds measure sequence entropy}

Now, we shall show that the t.s.e. of $f_{0,\infty}$ on $X$ with finite covering dimension
is no less than its corresponding measure sequence entropy.

We first recall the definition of measure sequence entropy \cite{Zhu12}.
Suppose that there exists a $f_{0,\infty}$-invariant measure $\mu\in\mathcal{M}(X)$,
ie. $\mu(B)=\mu(f_n^{-1}(B))$ for any $B\in\mathcal{B}(X)$ and $n\geq0$.
Then the quadruple $(X,\mathcal{B}(X),\mu,f_{0,\infty})$ is said to be a measure-theoretical dynamical system.
Let $A=\{a_i\}_{i=1}^{\infty}\in\mathcal{S}$. If $\xi$ is a finite partition of $X$, then the measure-theoretical sequence entropy
of $(X,\mathcal{B}(X),\mu,f_{0,\infty})$ with respect to $\xi$ and $A$ is defined by
\[h_{A,\mu}(f_{0,\infty},\xi)=\limsup_{n\to\infty}\frac{1}{n}H_{\mu}(\bigvee_{i=1}^{n}f_{0}^{-{a_i}}\xi),\]
where
\[H_{\mu}(\bigvee_{i=1}^{n}f_{0}^{-{a_i}}\xi)=-\sum_{C\in\bigvee_{i=1}^{n}f_{0}^{-{a_i}}\xi}\mu(C)\ln\mu(C).\]
The measure-theoretical sequence entropy of $(X,\mathcal{B}(X),\mu,f_{0,\infty})$ along $A$ is then defined by
\[h_{A,\mu}(f_{0,\infty})=\sup_{\xi}h_{A,\mu}(f_{0,\infty},\xi),\]
where the supremum is taken over all finite partitions of $X$.

By $(X^k,f_{0,\infty}^{(k)})$ denote the $k$-th product system of $(X,f_{0,\infty})$, where
\[X^k=\underbrace{X\times\cdots\times X}_{k},\; f_{0,\infty}^{(k)}=\{f_{n}^{(k)}\}_{n=0}^{\infty},\;
f_{n}^{(k)}=\underbrace{f_n\times\cdots\times f_n}_{k},\; k\geq1.\]
The metric
\[d_k((x_1,\cdots,x_k),(y_1,\cdots,y_k))=\max_{1\leq i\leq k}d(x_i,y_i),\;(x_1,\cdots,x_k),(y_1,\cdots,y_k)\in X^k,\]
is compatible with the product topology on $X^k$. To proceed, we need the following two lemmas.

\begin{lemma}\label{21}
Let $A\in\mathcal{S}$. Then
\[h_A(f_{0,\infty}^{(k)})=kh_A(f_{0,\infty}),\;k\geq1.\]
\end{lemma}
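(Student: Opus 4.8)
The plan is to prove the two inequalities $h_A(f_{0,\infty}^{(k)})\le kh_A(f_{0,\infty})$ and $h_A(f_{0,\infty}^{(k)})\ge kh_A(f_{0,\infty})$ separately, working with the spanning-set / separated-set characterizations of t.s.e.\ established in Proposition \ref{724}, since these tensor well. Throughout, fix $A=\{a_i\}_{i=1}^{\infty}\in\mathcal{S}$, $\epsilon>0$ and $n\ge1$, and use the product metric $d_k$ above, which satisfies $d_k\big((f_0^{a_j})^{(k)}(\mathbf x),(f_0^{a_j})^{(k)}(\mathbf y)\big)=\max_{1\le l\le k}d\big(f_0^{a_j}(x_l),f_0^{a_j}(y_l)\big)$ for $\mathbf x=(x_1,\dots,x_k)$, $\mathbf y=(y_1,\dots,y_k)$.

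For the upper bound, let $F\subset X$ be a set that $(n,\epsilon,A)$-spans $X$ under $f_{0,\infty}$ with $|F|=r_n(\epsilon,A,f_{0,\infty},X)$. Then $F^k=F\times\cdots\times F\subset X^k$ is $(n,\epsilon,A)$-spanning for $X^k$ under $f_{0,\infty}^{(k)}$: given $\mathbf y\in X^k$, choose $z_l\in F$ with $d(f_0^{a_j}(y_l),f_0^{a_j}(z_l))\le\epsilon$ for all $1\le j\le n$, and then $\mathbf z=(z_1,\dots,z_k)$ does the job because the product metric is the maximum. Hence $r_n(\epsilon,A,f_{0,\infty}^{(k)},X^k)\le r_n(\epsilon,A,f_{0,\infty},X)^k$, and taking $\frac1n\log$, then $\limsup_n$, then $\epsilon\to0$ gives $h_A(f_{0,\infty}^{(k)})\le kh_A(f_{0,\infty})$. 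For the lower bound, let $E\subset X$ be $(n,\epsilon,A)$-separated under $f_{0,\infty}$ with $|E|=s_n(\epsilon,A,f_{0,\infty},X)$; then $E^k\subset X^k$ is $(n,\epsilon,A)$-separated under $f_{0,\infty}^{(k)}$, since for distinct $\mathbf x,\mathbf y\in E^k$ there is a coordinate $l$ with $x_l\ne y_l$, hence some $1\le j\le n$ with $d(f_0^{a_j}(x_l),f_0^{a_j}(y_l))>\epsilon$, and the maximum over coordinates is then $>\epsilon$. Thus $s_n(\epsilon,A,f_{0,\infty}^{(k)},X^k)\ge s_n(\epsilon,A,f_{0,\infty},X)^k$, and the same limiting procedure yields $h_A(f_{0,\infty}^{(k)})\ge kh_A(f_{0,\infty})$.

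I do not expect any genuine obstacle here; the lemma is the nonautonomous analogue of the standard fact $h(f^{\times k})=kh(f)$ for topological entropy, and the sequence $A$ plays no role beyond being carried along. The only mild point of care is bookkeeping: one must observe that taking the $\limsup_{n\to\infty}$ of $\frac1n\log(\cdot)^k$ commutes with the exponent $k$ (it does, since $k$ is a fixed constant), and that the $\epsilon\to0$ limits defining $h_A$ exist by Proposition \ref{724}. An alternative, equally short route is to use open covers directly: if $\mathscr A$ is a finite open cover of $X$ then $\mathscr A^{(k)}:=\{A_1\times\cdots\times A_k:A_i\in\mathscr A\}$ is a finite open cover of $X^k$, every finite open cover of $X^k$ is refined by one of this form, $\big(\bigvee_{i=1}^n (f_0^{(k)})^{-a_i}\mathscr A^{(k)}\big)$ is the $k$-fold "product" of $\bigvee_{i=1}^n f_0^{-a_i}\mathscr A$, and $\mathcal N$ of such a product cover is exactly $\mathcal N(\cdot)^k$; taking $\frac1n\log$, $\limsup_n$, and then the supremum over $\mathscr A$ gives the claim. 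Either way the argument is routine.
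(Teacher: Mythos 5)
Your proposal is correct and follows essentially the same route as the paper: the upper bound via the spanning set $F^k$ and the lower bound via the separated set $E^k$, using the max product metric $d_k$, exactly as in the paper's proof of Lemma \ref{21}. The open-cover alternative you sketch is a fine variant but not needed.
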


\begin{proof}
Let $m\geq1$, $\epsilon>0$ and $A=\{a_i\}_{i=1}^{\infty}$.
Suppose that $F\subset X$ is a set which $(m,\epsilon,A)$-spans $X$ under $f_{0,\infty}$ with $|F|=r_m(\epsilon,A,f_{0,\infty})$.
Then one can easily verify that $F^k$ is an $(m,\epsilon,A)$-spanning set of $(X^k,f_{0,\infty}^{(k)})$.
Thus,
\[r_m(\epsilon,A,f_{0,\infty}^{(k)})\leq (r_m(\epsilon,A,f_{0,\infty}))^{k}.\]
So
\[h_A(f_{0,\infty}^{(k)})=\lim_{\epsilon\to0}\limsup_{m\to\infty}\frac{1}{m}\log r_{m}(\epsilon,A,f_{0,\infty}^{(k)})
\leq k\lim_{\epsilon\to0}\limsup_{m\to\infty}\frac{1}{m}\log r_{m}(\epsilon,A,f_{0,\infty})=kh_A(f_{0,\infty}).\]

On the other hand, let $E\subset X$ be an $(m,\epsilon,A)$-separated set of $(X,f_{0,\infty})$ with $|E|=s_m(\epsilon,A,f_{0,\infty})$.
Then $E^k\subset X^k$ is an $(m,\epsilon,A)$-separated set of $(X^k,f_{0,\infty}^{(k)})$.
In fact, for any $x=(x_1,\cdots,x_k),y=(y_1,\cdots,y_k)\in E^k$ with $x\neq y$, there exists
$1\leq i_0\leq k$ such that $x_{i_0}\neq y_{i_0}$. Then there exists $1\leq j\leq m$ such that
$d(f_{0}^{a_j}(x_{i_0}),f_{0}^{a_j}(y_{i_0}))>\epsilon$,
which implies that
\[d_k\left((f_{0}^{a_j}(x_1),\cdots,f_{0}^{a_j}(x_k)),(f_{0}^{a_j}(y_1),\cdots,f_{0}^{a_j}(y_k))\right)>\epsilon.\]
Thus,
\[s_m(\epsilon,A,f_{0,\infty}^{(k)})\geq (s_m(\epsilon,A,f_{0,\infty}))^{k},\]
which yields that
\begin{equation*}
\begin{split}
h_A(f_{0,\infty}^{(k)})&=\lim_{\epsilon\to0}\limsup_{m\to\infty}\frac{1}{m}\log r_{m}(\epsilon,A,f_{0,\infty}^{(k)})\\
&\geq k\lim_{\epsilon\to0}\limsup_{m\to\infty}\frac{1}{m}\log s_{m}(\epsilon,A,f_{0,\infty})=kh_A(f_{0,\infty}).
\end{split}
\end{equation*}
\end{proof}

\begin{lemma}\label{8101}
Let $\mu\in\mathcal{M}(X)$ be $f_{0,\infty}$-invariant. Then for any $A\in\mathcal{S}$,
\[h_{A,\mu}(f_{0,\infty}^{(k)})=kh_{A,\mu}(f_{0,\infty}),\;k\geq1.\]
\end{lemma}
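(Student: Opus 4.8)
The plan is to read $f_{0,\infty}^{(k)}$ as the $k$-fold product system on $X^k$ defined above and to lift $\mu$ to the product measure $\mu^{(k)}=\mu\times\cdots\times\mu$ ($k$ factors), so that $h_{A,\mu}(f_{0,\infty}^{(k)})$ denotes $h_{A,\mu^{(k)}}(f_{0,\infty}^{(k)})$. Since $\mu$ is $f_{0,\infty}$-invariant and $f_n^{(k)}=f_n\times\cdots\times f_n$, the measure $\mu^{(k)}$ is $f_{0,\infty}^{(k)}$-invariant, so the left-hand side is well defined. The factor $k$ will arise entirely from the $k$ independent coordinates of $\mu^{(k)}$; this is why the product measure, rather than the diagonal lift of $\mu$ (which merely reproduces $(X,f_{0,\infty})$ and gives factor $1$), is the correct choice.

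First I would reduce to product partitions. For a finite partition $\xi$ of $X$ set $\xi^{(k)}=\{B_1\times\cdots\times B_k:B_i\in\xi\}$, a finite partition of $X^k$, and write $(f^{(k)})_0^{a}=f_{a-1}^{(k)}\circ\cdots\circ f_0^{(k)}$ for the product-system composition. Then $(f^{(k)})_0^{a}=(f_0^{a})^{(k)}=f_0^a\times\cdots\times f_0^a$, so its preimage acts coordinatewise on $\xi^{(k)}$; since the join of product partitions is the product of the joins,
\[\bigvee_{i=1}^{n}\big((f^{(k)})_0^{a_i}\big)^{-1}\xi^{(k)}=\Big(\bigvee_{i=1}^{n}f_0^{-a_i}\xi\Big)^{(k)}.\]
Because $\mu^{(k)}$ factorizes on rectangles, a direct computation---summing out all but one coordinate, each factor summing to $1$---gives $H_{\mu^{(k)}}(\eta^{(k)})=kH_\mu(\eta)$ for every finite partition $\eta$ of $X$. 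Applying this to $\eta=\bigvee_{i=1}^{n}f_0^{-a_i}\xi$, then dividing by $n$ and taking $\limsup$, yields for each $\xi$
\[h_{A,\mu^{(k)}}(f_{0,\infty}^{(k)},\xi^{(k)})=k\,h_{A,\mu}(f_{0,\infty},\xi).\]

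Taking $\sup_\xi$ on the right gives $k\,h_{A,\mu}(f_{0,\infty})$, and the inequality $h_{A,\mu^{(k)}}(f_{0,\infty}^{(k)})\ge k\,h_{A,\mu}(f_{0,\infty})$ is immediate since every $\xi^{(k)}$ is an admissible partition of $X^k$. The reverse inequality is the only genuine obstacle: I must show product partitions are cofinal for the supremum defining $h_{A,\mu^{(k)}}(f_{0,\infty}^{(k)})$, with no Kolmogorov--Sinai generator theorem available for sequence entropy. For this I would establish Rokhlin-continuity of sequence entropy: using $\mu^{(k)}$-invariance, $H_{\mu^{(k)}}\big(\bigvee_i((f^{(k)})_0^{a_i})^{-1}\zeta\mid\bigvee_i((f^{(k)})_0^{a_i})^{-1}\zeta'\big)\le\sum_i H_{\mu^{(k)}}(\zeta\mid\zeta')=nH_{\mu^{(k)}}(\zeta\mid\zeta')$, so dividing by $n$ and taking $\limsup$ gives $|h_{A,\mu^{(k)}}(f_{0,\infty}^{(k)},\zeta)-h_{A,\mu^{(k)}}(f_{0,\infty}^{(k)},\zeta')|\le H_{\mu^{(k)}}(\zeta\mid\zeta')+H_{\mu^{(k)}}(\zeta'\mid\zeta)$. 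Since rectangles generate $\mathcal{B}(X^k)$, any finite partition $\zeta$ of $X^k$ is Rokhlin-approximated by one into finite unions of rectangles, which is refined by some product partition $\xi^{(k)}$; monotonicity under refinement together with this continuity then gives $\sup_\zeta h_{A,\mu^{(k)}}(f_{0,\infty}^{(k)},\zeta)=\sup_\xi h_{A,\mu^{(k)}}(f_{0,\infty}^{(k)},\xi^{(k)})$, whence $h_{A,\mu^{(k)}}(f_{0,\infty}^{(k)})=k\,h_{A,\mu}(f_{0,\infty})$. The delicate point is exactly this measure-theoretic density of rectangle partitions while keeping the $\limsup$ control uniform in the approximation.
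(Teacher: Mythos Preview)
Your proof is correct and follows essentially the same route as the paper: both compute on product partitions and exploit the factorization $H_{\mu^{(k)}}(\eta^{(k)})=kH_\mu(\eta)$, with the lower bound coming from $\xi^{(k)}$ (the paper takes $\xi_1=\xi_2=\xi$). The one difference is that for the upper bound the paper simply computes $h_{A,\mu}(f_{0,\infty}^{(2)},\xi_1\times\xi_2)\le h_{A,\mu}(f_{0,\infty},\xi_1)+h_{A,\mu}(f_{0,\infty},\xi_2)$ and then asserts without further comment that this gives $h_{A,\mu}(f_{0,\infty}^{(2)})\le 2h_{A,\mu}(f_{0,\infty})$; you correctly recognize that passing from product partitions to arbitrary partitions of $X^k$ requires justification (no Kolmogorov--Sinai theorem is available for sequence entropy) and supply it via Rokhlin continuity and density of rectangle partitions. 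So your argument is in fact more complete than the paper's on exactly the point you flag as delicate.
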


\begin{proof}
We only prove the case $k=2$, others can be shown inductively.
Let $\xi_1$ and $\xi_2$ be two finite partitions of $X$. Then
\begin{equation}\label{8102}
\begin{split}
h_{A,\mu}(f_{0,\infty}^{(2)},\xi_1\times\xi_2)
&=\limsup_{n\to\infty}\frac{-1}{n}\sum_{m,j}\mu(C_m)\mu(D_j)\ln(\mu(C_m)\mu(D_j))\\
&=\limsup_{n\to\infty}\frac{-1}{n}\left(\sum_{m,j}\mu(D_j)\mu(C_m)\ln\mu(C_m)+\sum_{m,j}\mu(C_m)\mu(D_j)\ln\mu(D_j)\right)\\
&=\limsup_{n\to\infty}\frac{-1}{n}\left(\sum_{m}\mu(C_m)\ln\mu(C_m)+\sum_{j}\mu(D_j)\ln\mu(D_j)\right)\\
&\leq\limsup_{n\to\infty}\frac{-1}{n}\sum_{m}\mu(C_m)\ln\mu(C_m)+\limsup_{n\to\infty}\frac{-1}{n}\sum_{j}\mu(D_j)\ln\mu(D_j)\\
&=h_{A,\mu}(f_{0,\infty},\xi_1)+h_{A,\mu}(f_{0,\infty},\xi_2)\leq 2h_{A,\mu}(f_{0,\infty}),
\end{split}
\end{equation}
where $\{C_m\}$ are the members of $\bigvee_{i=1}^{n}f_{0}^{-{a_i}}\xi_1$, and $\{D_j\}$ are the members of $\bigvee_{i=1}^{n}f_{0}^{-{a_i}}\xi_2$. This implies that
\[h_{A,\mu}(f_{0,\infty}^{(2)})\leq 2h_{A,\mu}(f_{0,\infty}).\]
On the other hand, consider the case $\xi_1=\xi_2=\xi$ in (\ref{8102}), one obtains that
\[h_{A,\mu}(f_{0,\infty}^{(2)},\xi\times\xi)=2h_{A,\mu}(f_{0,\infty},\xi)\]
for any finite partition $\xi$ of $X$. Thus,
\[h_{A,\mu}(f_{0,\infty}^{(2)})\geq2h_{A,\mu}(f_{0,\infty}).\]
Hence, $h_{A,\mu}(f_{0,\infty}^{(2)})=2h_{A,\mu}(f_{0,\infty})$.
\end{proof}

Let $\alpha$ be an open cover of $X$.
Recall that the order of $\alpha$ is the maximum number of sets in $\alpha$ with nonempty intersection and denoted by ord$\alpha$.
$X$ has covering dimension at most $n$ if any open cover of $X$ has an open refinement of order at most $n+1$ and denoted by dim$X$.

\begin{theorem}\label{7281}
Suppose that $X$ has finite covering dimension and $\mu\in\mathcal{M}(X)$ is $f_{0,\infty}$-invariant. Then for any $A\in\mathcal{S}$, $h_{A}(f_{0,\infty})\geq h_{A,\mu}(f_{0,\infty})$.
\end{theorem}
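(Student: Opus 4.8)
The plan is to follow the classical Goodman-type argument (Proposition~2.6 in \cite{Goodman74}) adapted to the nonautonomous and sequence-entropy setting, using the finite covering dimension to control how refining an open cover affects its multiplicity. First I would fix $A=\{a_i\}_{i=1}^\infty\in\mathcal{S}$ and a finite partition $\xi=\{P_1,\dots,P_r\}$ of $X$; since the $P_j$ are Borel and $\mu$ is regular, I would choose compact sets $Q_j\subset P_j$ with $\mu(P_j\setminus Q_j)$ as small as desired, and form the open cover $\beta$ consisting of small open neighbourhoods of the $Q_j$ together with one extra open set $Q_0$ of small measure covering the leftover. The point of this step is a standard estimate relating $H_\mu(\bigvee_{i=1}^n f_0^{-a_i}\xi)$ to $\log\mathcal N\big(\bigvee_{i=1}^n f_0^{-a_i}\beta\big)$ up to an additive error controlled by the measures of the overlaps and the leftover set (via the inequality $H_\mu(\eta)\le \log N(\eta)$ for a partition $\eta$ refining $\bigvee f_0^{-a_i}\xi$, plus a bookkeeping term counting how many elements of the cover each point meets).

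The covering dimension enters as follows. Because $\dim X = m < \infty$, the open cover $\beta$ has an open refinement $\gamma$ of order at most $m+1$. I would then need a lemma (this is the crux) saying that for a cover $\gamma$ of order $\le m+1$, the join $\bigvee_{i=1}^n f_0^{-a_i}\gamma$ is built from at most $(m+1)^n$-fold intersections, so that one can extract from it a subcover whose elements are individually "thin"; more precisely one shows
\[
\mathcal N\Big(\bigvee_{i=1}^n f_0^{-a_i}\gamma\Big)\le \binom{?}{?}\cdot\big(\text{something}\big),
\]
and this is where passing to the product system is convenient: working in $(X^k,f_{0,\infty}^{(k)})$ with the product cover $\gamma^{(k)}$ and using that $\dim X^k \le km$, one leverages Lemmas~\ref{21} and \ref{8101} to divide by $k$ at the end and wash out a bounded combinatorial factor. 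The precise bound I would aim for is
\[
\mathcal N\Big(\big(\bigvee_{i=1}^n f_0^{-a_i}\gamma\big)^{(k)}\Big)\ \ge\ \exp\!\Big(k\,H_\mu\big(\textstyle\bigvee_{i=1}^n f_0^{-a_i}\xi\big)-k\,n\,\delta(k)\Big)
\]
for a suitable error $\delta(k)\to 0$, which after taking $\frac1n\log$, then $\limsup_{n\to\infty}$, then dividing by $k$ and letting $k\to\infty$, yields $h_A(f_{0,\infty})\ge h_{A,\mu}(f_{0,\infty},\xi)$.

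Concretely the chain of inequalities I expect is: for each $n$,
\[
H_\mu\Big(\bigvee_{i=1}^n f_0^{-a_i}\xi\Big)\ \le\ \log \mathcal N\Big(\bigvee_{i=1}^n f_0^{-a_i}\beta\Big)+n\cdot\varepsilon_1,
\]
then, using the order-$(m+1)$ refinement $\gamma$ and a Stirling-type count of which cells of $\bigvee f_0^{-a_i}\gamma$ can be packed into each cell of $\bigvee f_0^{-a_i}\beta$,
\[
\log \mathcal N\Big(\bigvee_{i=1}^n f_0^{-a_i}\beta\Big)\ \le\ \log \mathcal N\Big(\bigvee_{i=1}^n f_0^{-a_i}\gamma\Big)+n\cdot\varepsilon_2,
\]
and finally, passing to the $k$-fold product and using $h_A(f_{0,\infty}^{(k)})=k\,h_A(f_{0,\infty})$ to absorb the dimension-induced constant into $\tfrac1k$, one gets $h_A(f_{0,\infty})\ge h_{A,\mu}(f_{0,\infty},\xi)-\varepsilon$ for every $\varepsilon>0$. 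Taking the supremum over finite partitions $\xi$ gives $h_A(f_{0,\infty})\ge h_{A,\mu}(f_{0,\infty})$.

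\textbf{Main obstacle.} The delicate point is the combinatorial counting lemma tying $\mathcal N(\bigvee f_0^{-a_i}\gamma)$ (or its product-system version) to $H_\mu(\bigvee f_0^{-a_i}\xi)$ with an error that is \emph{linear} in $n$ with small slope: a naive bound loses a factor that is exponential in $n$ with a slope depending on $\dim X$, which would not vanish. The device that makes the slope vanish is precisely the passage to $(X^k,f_{0,\infty}^{(k)})$: the bad multiplicative factor grows only polynomially in the number of "dimensions" $km$ relative to the exponential quantity $\exp(k H_\mu(\cdot))$, so after dividing by $k$ and sending $k\to\infty$ it disappears. Verifying that the error term genuinely has the form $n\,\delta(k)$ with $\delta(k)\to0$, uniformly enough to survive the $\limsup_n$, is where I expect the real work to lie; everything else (regularity of $\mu$, the inequality $H_\mu\le\log N$, invariance of $\mu$ used to keep $H_\mu(\bigvee f_0^{-a_i}\xi)$ well-behaved) is routine.
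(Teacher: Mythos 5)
Your outline follows the same Goodman-type strategy as the paper (bounded-order covers from finite covering dimension, a per-step loss of $\log(\mathrm{order})$, then the product trick via Lemmas \ref{21} and \ref{8101}), and your heuristic for why the product trick kills the dimension constant is correct. But the step you yourself flag as the crux is genuinely missing, and in the order you set things up it would fail. The bounded order of the refinement $\gamma$ only limits how many members of $\gamma$ overlap \emph{one another}; it gives no bound on how many atoms of your pre-chosen partition $\xi=\{P_1,\dots,P_r\}$ a single member of $\gamma$ can meet (a small open set can straddle arbitrarily many Borel atoms), and your leftover set $Q_0$ meets every $P_j$. So no inequality of the form $H_\mu\big(\bigvee_{i=1}^n f_0^{-a_i}\xi\big)\le\log\mathcal N\big(\bigvee_{i=1}^n f_0^{-a_i}\beta\big)+n\,\varepsilon_1$ with controlled $\varepsilon_1$ follows from your construction. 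The paper reverses the order: it first takes a finite open cover $\alpha=\{B_i\}$ of order at most $k+1$ (with $k=\dim X$), then inscribes a partition $\xi=\{A_i\}$ with $\bar{A_i}\subset B_i$; compactness then gives, around every point, a neighbourhood meeting at most $k+1$ atoms of $\xi$, hence an auxiliary cover $\beta$ with Lebesgue number $2\epsilon$ all of whose members meet at most $k+1$ atoms. Counting itineraries against an $(n,\epsilon,A)$-spanning set $F$ yields $|S_n|\le(k+1)^n r_n(\epsilon,A,f_{0,\infty})$, i.e. $H_\mu\big(\bigvee_{i=1}^n f_0^{-a_i}\xi\big)\le n\ln(k+1)+\ln r_n(\epsilon,A,f_{0,\infty})$ --- precisely the counting lemma you left with question marks. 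Moreover, since this bound holds only for such special partitions, an approximation step your sketch omits is needed: for an arbitrary partition $\eta$ one uses regularity of $\mu$ and continuity of conditional entropy to build a partition $\gamma'$, coarser than a special $\xi$, with $H_\mu(\eta/\gamma')<\delta/2$, which upgrades the bound to $h_{A,\mu}(f_{0,\infty})\le h_A(f_{0,\infty})+\ln(k+1)$.

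Your endgame then matches the paper exactly: apply this inequality to $(X^n,f_{0,\infty}^{(n)})$, where $\dim X^n\le nk$, and use $h_A(f_{0,\infty}^{(n)})=n\,h_A(f_{0,\infty})$ and $h_{A,\mu}(f_{0,\infty}^{(n)})=n\,h_{A,\mu}(f_{0,\infty})$ to get $n\,h_{A,\mu}(f_{0,\infty})\le n\,h_A(f_{0,\infty})+\ln(nk+1)$, so the constant disappears after dividing by $n$. In short: right architecture and right use of the two product lemmas, but the central combinatorial estimate is unproved, and the way you arranged the construction (arbitrary partition first, dimension-refined cover second, Misiurewicz-style leftover set) cannot deliver it; you need the partition built subordinate to the bounded-order cover, plus the conditional-entropy approximation to reach arbitrary partitions.
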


\begin{proof}
Let dim$X=k$ and $A=\{a_i\}_{i=1}^{\infty}$. Take a finite open cover $\alpha=\{B_i\}_{i=1}^{t}$ of $X$
with ord$\alpha\leq k+1$. Then construct a partition $\xi=\{A_i\}_{i=1}^{t}$ of $X$ satisfying that
$\bar{A_i}\subset B_i$, $1\leq i\leq t$. For any $x\in X$, choose an open neighborhood $C_x$ which intersects at most
$k+1$ members of $\xi$. Let $\beta$ be a finite subcover of $\{C_x: x\in X\}$ with Lebesgue number $2\epsilon>0$.
Let $n\geq1$ and $F$ be an $(n,\epsilon,A)$-spanning set of $(X,f_{0,\infty})$ with $|F|=r_n(\epsilon,A,f_{0,\infty})$.
Denote
\[S_n=\{(i_1,\cdots,i_n)\in\{1,\cdots,t\}^n: {\rm there}\; {\rm exists}\; x\in X\; {\rm such}\; {\rm that}\; f_{0}^{a_l}(x)\in A_{i_l},\; 1\leq l\leq n\},\]
and
\[T_n=\{(i_1,\cdots,i_n)\in\{1,\cdots,t\}^n: \forall\;1\leq l\leq n,\;\exists\; y\in F\; {\rm and}\; B_{l,y}\in\beta,\;{\rm s.t.} \;f_{0}^{a_l}(x)\in A_{i_l}\cap B_{l,y}\}.\]
Then the number of sets in the partition $\bigvee_{i=1}^{n}f_{0}^{-{a_i}}\xi$ is less than or equal to $|S_n|$.
It is to show that $S_n\subset T_n$. For any $(i_1,\cdots,i_n)\in S_n$, there exists $x\in X$ such that
$f_{0}^{a_l}(x)\in A_{i_l}$ for any $1\leq l\leq n$. Then there exists $y\in F$ such that
$d(f_{0}^{a_l}(x),f_{0}^{a_l}(y))\leq\epsilon$ for any $1\leq l\leq n$.
Thus, for any $1\leq l\leq n$, there exists $B_{l,y}\in \beta$ such that
\[f_{0}^{a_l}(x)\in A_{i_l}\cap B(f_{0}^{a_l}(y),\epsilon)\subset A_{i_l}\cap B_{l,y}.\]
So, $(i_1,\cdots,i_n)\in T_n$. This, together with the choice of $\beta$, yields that
\begin{equation*}
|S_n|\leq|T_n|\leq(k+1)^n|F|=(k+1)^nr_n(\epsilon,A,f_{0,\infty}).
\end{equation*}
Then we have
\begin{equation*}
H_{\mu}(\bigvee_{i=1}^{n}f_{0}^{-{a_i}}\xi)\leq\ln|S_n|\leq n\ln(k+1)+\ln r_n(\epsilon,A,f_{0,\infty}),
\end{equation*}
which yields that
\begin{equation}\label{631}
h_{A,\mu}(f_{0,\infty},\xi)\leq\ln(k+1)+h_{A}(f_{0,\infty}).
\end{equation}

Take $\delta>0$. Then there exists a finite Borel partition $\eta=\{D_i\}_{i=1}^{s}$ of $X$ with
\begin{equation}\label{734}
h_{A,\mu}(f_{0,\infty},\eta)>h_{A,\mu}(f_{0,\infty})-\frac{1}{2}\delta,
\end{equation}
and one can also choose $\delta_1>0$ such that for any Borel partition $\eta'=\{D'_i\}_{i=1}^{s}$
of $X$,
\begin{equation}\label{733}
\mu(D_i\bigtriangleup D'_i)<\delta_1, 1\leq i\leq s,\;\Rightarrow\; H_{\mu}(\eta/\eta')<\frac{1}{2}\delta.
\end{equation}
Since $\mu$ is regular, there exists a closed set $K_i\subset D_i$ satisfying that
\[\mu(D_i\setminus K_i)<s^{-1}\delta_1,\;1\leq i\leq s.\]
Denote $\alpha'=\{O_i\}_{i=1}^{s}$, where $O_i=X\setminus(\cup_{j\neq i}K_j)$, $1\leq i\leq s$.
Then $\alpha'$ is an open cover of $X$. Let $\alpha$ be a finite refinement of $\alpha'$ with ord$\alpha\leq k+1$.
The above shows that there exists a Borel partition $\xi$ of $X$ which is a refinement of $\alpha$ and
satisfies (\ref{631}). Denote $\gamma'=\{E_i\}_{i=1}^{s}$, where
\[E_i=\bigcup\{G\in\xi: G\cap K_i\neq\emptyset\},\;1\leq i\leq s-1,\;E_s=X\setminus(\cup_{j=1}^{s-1}E_j).\]
Then $\gamma'$ is a Borel partition of $X$ and $\gamma'$ is a refinement of $\xi$.
Since $K_i\subset E_i$, $1\leq i\leq s$, we have
\begin{equation}\label{731}
\mu(D_i\setminus E_i)\leq\mu(D_i\setminus K_i)<s^{-1}\delta_1,\;1\leq i\leq s.
\end{equation}
We also have
\[E_i\setminus D_i\subset E_i\bigcap (\bigcup_{j\neq i}D_j)=\bigcup_{j\neq i}(E_i\cap D_j)\subset\bigcup_{j\neq i}(D_j\setminus E_j),\]
This, together with (\ref{731}), implies that
\begin{equation}\label{732}
\mu(E_i\setminus D_i)\leq(s-1)s^{-1}\delta_1=(1-s^{-1})\delta_1,\;1\leq i\leq s.
\end{equation}
It follows from (\ref{731}) and (\ref{732}) that
$\mu(D_i\bigtriangleup E_i)<\delta_1$. Using (\ref{733}), we have that
\begin{equation}\label{735}
H_{\mu}(\eta/\gamma')<\frac{\delta}{2}.
\end{equation}
Now,
\begin{equation*}
\begin{split}
H_{\mu}(\bigvee_{i=1}^{n}f_{0}^{-a_i}\eta)&\leq H_{\mu}(\bigvee_{i=1}^{n}f_{0}^{-a_i}(\eta\vee\gamma'))
=H_{\mu}(\bigvee_{i=1}^{n}f_{0}^{-a_i}\gamma')+H_{\mu}(\bigvee_{i=1}^{n}f_{0}^{-a_i}(\eta/\gamma'))\\
&\leq H_{\mu}(\bigvee_{i=1}^{n}f_{0}^{-a_i}\gamma')+nH_{\mu}(\eta/\gamma').
\end{split}
\end{equation*}
Thus,
\begin{equation}\label{736}
h_{A,\mu}(f_{0,\infty},\eta)\leq h_{A,\mu}(f_{0,\infty},\gamma')+H_{\mu}(\eta/\gamma').
\end{equation}
By (\ref{631}), (\ref{734}), (\ref{735}) and (\ref{736}), we have that
\begin{equation}
\begin{split}
h_{A,\mu}(f_{0,\infty})&<h_{A,\mu}(f_{0,\infty},\eta)+\frac{\delta}{2}
\leq h_{A,\mu}(f_{0,\infty},\gamma')+H_{\mu}(\eta/\gamma')+\frac{\delta}{2}\\
&\leq h_{A,\mu}(f_{0,\infty},\gamma')+\delta
\leq h_{A,\mu}(f_{0,\infty},\xi)+\delta
\leq \ln(k+1)+h_{A}(f_{0,\infty})+\delta.
\end{split}
\end{equation}
Since $\delta$ is arbitrary,
\[h_{A,\mu}(f_{0,\infty})\leq h_{A}(f_{0,\infty})+\ln(k+1).\]
For any $n\geq1$, let $Y=X^n$ and $g_{0,\infty}=\{f_i^{(n)}\}_{i=0}^{\infty}$, where $f_i^{(n)}=\underbrace{f_i\times\cdots\times f_i}_{n}$.
Since dim$Y\leq nk$, the above argument, together with Lemmas \ref{21} and \ref{8101}, implies that
\[nh_{A,\mu}(f_{0,\infty})=h_{A,\mu}(g_{0,\infty})\leq h_{A}(g_{0,\infty})+\ln(nk+1)=nh_{A}(f_{0,\infty})+\ln(nk+1).\]
Let $n\to\infty$. Then we have that
\[h_{A,\mu}(f_{0,\infty})\leq h_{A}(f_{0,\infty}).\]
\end{proof}

\begin{remark}
It is worth mentioning that Theorem \ref{7281} is inspired by Theorem 3.1 in \cite{Goodman74}, and we extend it to NDSs.
\end{remark}

\section{The supremum t.s.e.}

For a homeomorphism $f: X\to X$, Huang and Ye \cite{Huang09} introduced the notion of
maximal pattern entropy $h^{*}(f)$, one of its equivalent definitions is that
\[h^{*}(f)=\sup_{A\in\mathcal{S}}h_A(f),\]
and thus it is called the ``supremum t.s.e." of $(X,f)$ in \cite{Snoha20}.
Here, we generalize this concept to NDSs and define
\[h^{*}(f_{0,\infty})=\sup_{A\in\mathcal{S}}h_A(f_{0,\infty}),\]
and we shall study some properties of it in this section.

By $(X,f_{0,\infty}^{n})$ denote the $n$-th compositions system of $(X,f_{0,\infty})$, where
\[f_{0,\infty}^{n}=\{f_{kn}^{n}\}_{k=0}^{\infty},\; \;f_{kn}^{n}=f_{kn+n-1}\circ\cdots\circ f_{kn},\;k\geq0,\;n\geq1.\]
We need the following lemma.

\begin{lemma}\label{6151}
Let $A=\{a_i\}_{i=1}^{\infty}\in\mathcal{S}$, $nA=\{na_i\}_{i=1}^{\infty}$ and
$\sigma: {\mathds{Z}_{+}}^{\mathds{N}}\to{\mathds{Z}_{+}}^{\mathds{N}}$ be the shift map defined by $\sigma(a_1,a_2,\cdots)=(a_2,a_3,\cdots)$.  Then
\begin{itemize}
\item[{\rm(i)}]
\begin{equation}\label{7242}
h_{A}(f_{0,\infty}^{n})=h_{nA}(f_{0,\infty}),\;n\geq1;
\end{equation}
\item[{\rm(ii)}]
\begin{equation}\label{7241}
h_A(f_{0,\infty})=h_{\sigma^k(A)}(f_{0,\infty}),\;k\geq1.
\end{equation}
\end{itemize}
\end{lemma}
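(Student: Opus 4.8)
The plan is to read off both identities from a direct comparison of $(m,\epsilon,A)$-separated sets, which is legitimate by Proposition \ref{724} (equivalence of the cover and separated/spanning definitions of t.s.e.). In (i) this comparison is an exact equality of separated sets; in (ii) it is a two-sided estimate differing by a multiplicative constant that the operations $\frac{1}{n}\log(\cdot)$ and $\limsup$ wash out.

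For (i), I would write $g_k:=f_{kn}^{n}=f_{kn+n-1}\circ\cdots\circ f_{kn}$ for the maps of the $n$-th compositions system $(X,f_{0,\infty}^{n})$, and let $g_{0}^{m}:=g_{m-1}\circ\cdots\circ g_{0}$ denote iterated composition in that system. Concatenating the blocks,
\[g_{0}^{m}=f_{(m-1)n}^{n}\circ\cdots\circ f_{n}^{n}\circ f_{0}^{n}=f_{mn-1}\circ\cdots\circ f_{1}\circ f_{0}=f_{0}^{mn},\qquad m\ge1.\]
Hence $g_{0}^{a_{j}}=f_{0}^{na_{j}}$, and since $na_{j}$ is precisely the $j$-th term of $nA=\{na_{i}\}_{i=1}^{\infty}\in\mathcal{S}$, a set $E\subset X$ is $(m,\epsilon,A)$-separated for $g_{0,\infty}$ if and only if it is $(m,\epsilon,nA)$-separated for $f_{0,\infty}$. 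Thus $s_{m}(\epsilon,A,f_{0,\infty}^{n})=s_{m}(\epsilon,nA,f_{0,\infty})$ for all $m\ge1$ and $\epsilon>0$, and applying $\lim_{\epsilon\to0}\limsup_{m\to\infty}\frac{1}{m}\log(\cdot)$ to both sides gives $h_{A}(f_{0,\infty}^{n})=h_{nA}(f_{0,\infty})$.

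For (ii), it suffices to treat $k=1$ and iterate. Set $B:=\sigma(A)=\{a_{i+1}\}_{i=1}^{\infty}\in\mathcal{S}$. Since $a_{2},\dots,a_{n+1}$ are exactly the first $n$ terms of $B$, any $(n,\epsilon,B)$-separated set is $(n+1,\epsilon,A)$-separated, so $s_{n}(\epsilon,B,f_{0,\infty})\le s_{n+1}(\epsilon,A,f_{0,\infty})$. Conversely, fixing $\epsilon>0$, I would partition $X$ into $N_{\epsilon}$ Borel sets of diameter $<\epsilon$, with $N_{\epsilon}$ depending only on $\epsilon$ and $X$ (possible by compactness); given an $(n+1,\epsilon,A)$-separated set $E$, grouping its points by which cell contains $f_{0}^{a_{1}}(x)$ splits $E$ into at most $N_{\epsilon}$ pieces, and within one cell no two points are separated at the coordinate $j=1$, so each piece is $(n,\epsilon,B)$-separated; hence $s_{n+1}(\epsilon,A,f_{0,\infty})\le N_{\epsilon}\,s_{n}(\epsilon,B,f_{0,\infty})$. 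Taking $\frac{1}{n}\log(\cdot)$ and $\limsup_{n\to\infty}$, and using $\limsup_{n\to\infty}\frac{1}{n}\log s_{n+1}(\epsilon,A,f_{0,\infty})=\limsup_{n\to\infty}\frac{1}{n}\log s_{n}(\epsilon,A,f_{0,\infty})$ together with $\frac{1}{n}\log N_{\epsilon}\to0$, both estimates collapse to $\limsup_{n\to\infty}\frac{1}{n}\log s_{n}(\epsilon,B,f_{0,\infty})=\limsup_{n\to\infty}\frac{1}{n}\log s_{n}(\epsilon,A,f_{0,\infty})$ for every $\epsilon>0$. Letting $\epsilon\to0$ gives $h_{\sigma(A)}(f_{0,\infty})=h_{A}(f_{0,\infty})$, and induction on $k$ finishes (ii).

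The index bookkeeping in (i) is routine, so the only delicate point is the reverse inequality in (ii): the error factor $N_{\epsilon}$ in $s_{n+1}(\epsilon,A,f_{0,\infty})\le N_{\epsilon}\,s_{n}(\epsilon,B,f_{0,\infty})$ must be chosen independently of $n$ so that its logarithm vanishes after division by $n$, and compactness of $X$ supplies exactly such a uniform bound; the accompanying harmless index shift $\limsup_{n}\frac{1}{n}\log s_{n+1}=\limsup_{n}\frac{1}{n}\log s_{n}$ is the remaining minor technicality. These are the standard ``deleting finitely many iterates leaves the entropy unchanged'' manipulations, so I expect no genuine obstacle. (Alternatively, (ii) can be run with open covers, comparing $\mathcal N(\bigvee_{i=1}^{n+1}f_{0}^{-a_{i}}\mathscr{A})$ with $\mathcal N(\bigvee_{i=2}^{n+1}f_{0}^{-a_{i}}\mathscr{A})$, which differ by a factor of at most $|\mathscr{A}|$.)
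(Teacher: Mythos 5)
Your proof is correct, and both parts reach the same conclusions as the paper by closely parallel means. Part (i) rests on exactly the paper's observation, the block identity $f_{(m-1)n}^{n}\circ\cdots\circ f_{0}^{n}=f_{0}^{mn}$; the paper phrases it as an equality of cover counts $\mathcal{N}\big(\bigvee_{i=1}^{k}(f_{(a_i-1)n}^{n}\circ\cdots\circ f_{0}^{n})^{-1}\alpha\big)=\mathcal{N}\big(\bigvee_{i=1}^{k}f_{0}^{-na_i}\alpha\big)$, while you phrase it as the identity $s_m(\epsilon,A,f_{0,\infty}^{n})=s_m(\epsilon,nA,f_{0,\infty})$ of separated-set counts; these are the same argument in the two equivalent languages of Proposition \ref{724}. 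For (ii) the paper stays with open covers and gets both directions in two lines: monotonicity under refinement gives $\mathcal{N}\big(\bigvee_{i=1}^{n}f_{0}^{-a_i}\alpha\big)\geq\mathcal{N}\big(\bigvee_{i=2}^{n}f_{0}^{-a_i}\alpha\big)$, hence $h_A\geq h_{\sigma(A)}$, and submultiplicativity of $\mathcal{N}$ over joins splits off the first $k$ terms as a factor independent of $n$, hence $h_A\leq h_{\sigma^k(A)}$ --- precisely the cover variant you mention in your closing parenthesis. Your separated-set version replaces the submultiplicativity step by the pigeonhole bound $s_{n+1}(\epsilon,A,f_{0,\infty})\leq N_{\epsilon}\,s_{n}(\epsilon,\sigma(A),f_{0,\infty})$ obtained from a finite partition of $X$ into cells of diameter less than $\epsilon$; this is sound (compactness supplies the uniform $N_{\epsilon}$, a cell of diameter $<\epsilon$ indeed prevents separation at the coordinate $j=1$, and the index shift in the $\limsup$ is harmless since $\frac1n\log s_n(\epsilon,\cdot)\leq\log N_\epsilon$). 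The trade-off is minor: your route needs the extra compactness/pigeonhole device and Proposition \ref{724} to translate back to $h_A$, whereas the paper's computation works directly with the cover definition and needs no auxiliary partition; on the other hand your estimate is slightly more quantitative, giving the explicit factor $N_\epsilon$ at each fixed $\epsilon$.
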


\begin{proof}
(i) Let $\alpha$ be an open cover of $X$ and $n\geq1$. Then
\begin{equation*}
\begin{split}
h_A(f_{0,\infty}^{n},\alpha)
&=\limsup_{k\to\infty}\frac{1}{k}\log\mathcal{N}\left((f_{(a_{1}-1)n}^{n}\circ\cdots \circ f_{0}^{n})^{-1}\alpha\vee\cdots\vee(f_{(a_{k}-1)n}^{n}\circ\cdots \circ f_{0}^{n})^{-1}\alpha\right)\\
&=\limsup_{k\to\infty}\frac{1}{k}\log\mathcal{N}\left(f_{0}^{-na_1}\alpha\vee\cdots\vee f_{0}^{-na_k}\alpha\right)
=h_{nA}(f_{0,\infty},\alpha).
\end{split}
\end{equation*}
Hence, $h_{A}(f_{0,\infty}^{n})=h_{nA}(f_{0,\infty})$.

(ii) Let $\alpha$ be an open cover of $X$. It is easy to see that
\begin{equation*}
\begin{split}
h_{A}(f_{0,\infty},\alpha)
\leq\limsup_{n\to\infty}\frac{1}{n}\log\mathcal{N}(\bigvee_{i=1}^{k}f_{0}^{-a_i}\alpha)
+\limsup_{n\to\infty}\frac{1}{n}\log\mathcal{N}(\bigvee_{i=k+1}^{n}f_{0}^{-a_i}\alpha)
=h_{\sigma^k(A)}(f_{0,\infty},\alpha),
\end{split}
\end{equation*}
which implies that $h_A(f_{0,\infty})\leq h_{\sigma^k(A)}(f_{0,\infty})$. On the other hand, note that
\begin{equation*}
h_{A}(f_{0,\infty},\alpha)=\limsup_{n\to\infty}\frac{1}{n}\log\mathcal{N}(\bigvee_{i=1}^{n}f_{0}^{-a_i}\alpha)
\geq\limsup_{n\to\infty}\frac{1}{n}\log\mathcal{N}(\bigvee_{i=2}^{n}f_{0}^{-a_i}\alpha)=h_{\sigma(A)}(f_{0,\infty},\alpha).
\end{equation*}
One can inductively get that $h_A(f_{0,\infty})\geq h_{\sigma^k(A)}(f_{0,\infty})$.
\end{proof}

It is well known that $h(f^n)=nh(f)$ for each $n\geq1$, where $f: X\to X$ is a continuous map.
This equality was extended to NDSs in \cite{Kolyada96} provided that $f_{0,\infty}$ is equi-continuous on $X$:
\begin{equation}\label{7201}
h(f_{0,\infty}^{n})=nh(f_{0,\infty}), \;n\geq1.
\end{equation}
However, $h_{A}(f^n)\neq nh_{A}(f)$ in general \cite{Lemancz85}.
In the case that $A$ is some special sequence, we have the following result.

\begin{proposition}\label{6152}
Let $A=\{k^i\}_{i=1}^{\infty}$ for some $k\geq1$. Then
\[h_{A}(f_{0,\infty}^{k^n})=h_{A}(f_{0,\infty}),\;n\geq1.\]
\end{proposition}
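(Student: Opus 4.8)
The statement follows by combining the two parts of Lemma \ref{6151}, so essentially no new work is needed beyond one bookkeeping observation about the sequence $A=\{k^i\}_{i=1}^{\infty}$. First I would apply part (i) of Lemma \ref{6151} with composition depth $k^n$ in place of $n$, which gives
\[
h_{A}(f_{0,\infty}^{k^n})=h_{k^nA}(f_{0,\infty}),
\]
where $k^nA=\{k^n a_i\}_{i=1}^{\infty}$.

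Next comes the key observation. Since $A=(k,k^2,k^3,\dots)$, multiplying every term by $k^n$ yields $k^nA=(k^{n+1},k^{n+2},k^{n+3},\dots)$, which is precisely $\sigma^n(A)$, the result of deleting the first $n$ terms $k,k^2,\dots,k^n$ of $A$. Thus for this particular geometric sequence the dilation $k^nA$ coincides with the iterated shift $\sigma^n(A)$. This identity $k^nA=\sigma^n(A)$ is the only nontrivial point in the argument, and it is what makes the statement work for the specific sequence $\{k^i\}$ rather than for a general element of $\mathcal{S}$.

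Finally I would invoke part (ii) of Lemma \ref{6151} (applied $n$ times, or equivalently with shift amount $n$), which gives $h_{\sigma^n(A)}(f_{0,\infty})=h_{A}(f_{0,\infty})$. Chaining the three equalities,
\[
h_{A}(f_{0,\infty}^{k^n})=h_{k^nA}(f_{0,\infty})=h_{\sigma^n(A)}(f_{0,\infty})=h_{A}(f_{0,\infty}),
\]
which is the desired conclusion. I do not anticipate any real obstacle; the proof is a short deduction from the lemma, and care is only needed to keep the two roles of ``$k$'' distinct — the base of the geometric sequence versus the shift/composition index — which I would handle by writing the shift amount as $n$ throughout.
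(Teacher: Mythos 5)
Your argument is correct and is essentially identical to the paper's own proof: both apply Lemma \ref{6151}(i) to get $h_{A}(f_{0,\infty}^{k^n})=h_{k^nA}(f_{0,\infty})$, observe that $k^nA=\sigma^n(A)$ for the geometric sequence $A=\{k^i\}$, and conclude via Lemma \ref{6151}(ii). Your write-up just makes the key identification $k^nA=\sigma^n(A)$ explicit, which the paper leaves implicit.
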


\begin{proof}
Fix $n\geq1$. By (\ref{7242}) and (\ref{7241}), one gets that
\[h_{A}(f_{0,\infty}^{k^n})=h_{k^nA}(f_{0,\infty})=h_{\sigma^n(A)}(f_{0,\infty})=h_{A}(f_{0,\infty}).\]
\end{proof}

\begin{proposition}
Let $f_{0,\infty}$ be equi-continuous on $X$. Then $h(f_{0,\infty})>0$ implies that $h^{*}(f_{0,\infty})=+\infty$.
\end{proposition}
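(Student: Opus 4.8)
The plan is to exploit Proposition~\ref{6152} together with equation~(\ref{7201}). Suppose $h(f_{0,\infty})>0$. Since $f_{0,\infty}$ is equi-continuous, iterating (\ref{7201}) gives $h(f_{0,\infty}^{k^n})=k^n h(f_{0,\infty})$ for every $k\geq 2$ and $n\geq 1$; in particular, for a fixed $k\geq 2$, the topological entropy of the $k^n$-th compositions system tends to $+\infty$ as $n\to\infty$. The idea is to transport this unbounded growth into a single sequence entropy $h_A(f_{0,\infty})$ by choosing $A=\{k^i\}_{i=1}^{\infty}$. First I would record that for the full sequence $\mathds{Z}_{+}$ one has $h_{\mathds{Z}_{+}}(g_{0,\infty})=h(g_{0,\infty})$ for any NDS $g_{0,\infty}$, and that sequence entropy along any $A\in\mathcal{S}$ is a lower bound for $h^{*}$.

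The key step is to compare $h_A(f_{0,\infty})$ with $h(f_{0,\infty}^{k^n})$. By Proposition~\ref{6152}, with $A=\{k^i\}_{i=1}^{\infty}$, we have $h_A(f_{0,\infty}^{k^n})=h_A(f_{0,\infty})$ for every $n\geq1$. On the other hand, sequence entropy along a sparser sequence is no larger than topological entropy: for any NDS $g_{0,\infty}$ and any $A\in\mathcal{S}$, $h_A(g_{0,\infty})\leq h(g_{0,\infty})$. Hmm --- but this inequality goes the wrong way; I want a lower bound on $h_A(f_{0,\infty})$, not an upper bound. So instead I would argue as follows: apply Proposition~\ref{6152} to the \emph{compositions} system. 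Consider $g_{0,\infty}:=f_{0,\infty}^{k}$, the $k$-th compositions system. Equi-continuity of $f_{0,\infty}$ passes to $g_{0,\infty}$, and iterating Proposition~\ref{6152} one level at a time, one obtains $h_A(f_{0,\infty})=h_A(f_{0,\infty}^{k})=h_A(f_{0,\infty}^{k^2})=\cdots$. Thus all these sequence entropies coincide, but that alone does not produce infinity. The real mechanism must be: choose the sequence adapted to the \emph{size} we want. For a target $M>0$, pick $n$ with $k^n h(f_{0,\infty})>M$, and exhibit a sequence $A^{(n)}$ along which $h_{A^{(n)}}(f_{0,\infty})\geq h(f_{0,\infty}^{k^n})=k^n h(f_{0,\infty})>M$. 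The natural candidate is $A^{(n)}=k^n\mathds{Z}_{+}=\{k^n j\}_{j\geq 0}$ (the arithmetic-progression sequence), for which equation~(\ref{7242}) type reasoning gives $h_{A^{(n)}}(f_{0,\infty})=h_{\mathds{Z}_{+}}(f_{0,\infty}^{k^n})=h(f_{0,\infty}^{k^n})$.

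Concretely: by Lemma~\ref{6151}(i), $h_{\mathds{Z}_{+}}(f_{0,\infty}^{n})=h_{n\mathds{Z}_{+}}(f_{0,\infty})$ for each $n\geq 1$ (this is exactly (\ref{7242}) applied with the full sequence $A=\mathds{Z}_{+}$, noting $n\mathds{Z}_{+}\in\mathcal{S}$). Hence for every $n\geq1$,
\[
h_{k^n\mathds{Z}_{+}}(f_{0,\infty})=h_{\mathds{Z}_{+}}(f_{0,\infty}^{k^n})=h(f_{0,\infty}^{k^n})=k^n\,h(f_{0,\infty}),
\]
using (\ref{7201}) iterated in the last equality (equi-continuity being preserved under passing to compositions systems). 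Therefore
\[
h^{*}(f_{0,\infty})=\sup_{B\in\mathcal{S}}h_B(f_{0,\infty})\geq\sup_{n\geq1}h_{k^n\mathds{Z}_{+}}(f_{0,\infty})=\sup_{n\geq1}k^n\,h(f_{0,\infty})=+\infty,
\]
since $h(f_{0,\infty})>0$ and $k\geq 2$.

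The main obstacle is the careful justification of two facts already available in the excerpt: first, that equi-continuity of $f_{0,\infty}$ implies equi-continuity of each compositions system $f_{0,\infty}^{m}$, so that (\ref{7201}) may legitimately be iterated to yield $h(f_{0,\infty}^{k^n})=k^n h(f_{0,\infty})$; and second, that (\ref{7242}) (stated for $nA$) applies verbatim to $A=\mathds{Z}_{+}$, i.e.\ to the sequence $n\mathds{Z}_{+}$, which indeed lies in $\mathcal{S}$. Both are routine once unpacked. One should note that the argument does not need $k$ equal to the base in Proposition~\ref{6152}; any integer $k\geq2$ works, and in fact the cleaner route above uses only Lemma~\ref{6151}(i) and the iterated form of (\ref{7201}), with Proposition~\ref{6152} available as an alternative packaging of the same idea.
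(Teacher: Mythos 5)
Your final argument is correct and is essentially the paper's own proof: the paper also combines (\ref{7242}) with $A=\mathds{Z}_{+}$ and (\ref{7201}) to get $h_{n\mathds{Z}_{+}}(f_{0,\infty})=h(f_{0,\infty}^{n})=nh(f_{0,\infty})$ for every $n\geq1$ and lets $n\to\infty$. Your detour through Proposition~\ref{6152} and the restriction to exponents $k^n$ (with iteration of (\ref{7201}), which is unnecessary since (\ref{7201}) already holds for every $n$) are just a slightly roundabout packaging of the same idea.
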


\begin{proof}
Fix $n\geq1$. By (\ref{7242}) and (\ref{7201}), we have
\[h_{n\mathds{Z}^{+}}(f_{0,\infty})=h(f_{0,\infty}^{n})=nh(f_{0,\infty}).\]
This implies that $h^{*}(f_{0,\infty})=+\infty$ since $n$ is arbitrary.
\end{proof}


Motivated by Proposition \ref{6152}, we obtain an equality for supremum t.s.e.,
which extends Theorem 2.3 (2) in \cite{Huang09} (see also Theorem 1 in \cite{Wu17}) to NDSs.

\begin{theorem}\label{sequence entropy equality}
Let $f_{0,\infty}$ be equi-continuous on $X$. Then
\[h^{*}(f_{0,\infty}^{n})=h^{*}(f_{0,\infty}),\;n\geq1.\]
\end{theorem}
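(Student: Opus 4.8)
The plan is to prove the two inequalities $h^{*}(f_{0,\infty}^{n})\le h^{*}(f_{0,\infty})$ and $h^{*}(f_{0,\infty})\le h^{*}(f_{0,\infty}^{n})$ separately; only the second uses equi-continuity. The first is immediate from Lemma \ref{6151}(i): for every $A\in\mathcal{S}$ one has $h_{A}(f_{0,\infty}^{n})=h_{nA}(f_{0,\infty})$, and since $nA=\{na_{i}\}_{i=1}^{\infty}\in\mathcal{S}$ this is $\le h^{*}(f_{0,\infty})$; taking the supremum over $A$ gives the claim.

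For the reverse inequality I would fix $A=\{a_{i}\}_{i=1}^{\infty}\in\mathcal{S}$ and produce $B\in\mathcal{S}$ with $h_{A}(f_{0,\infty})\le h_{B}(f_{0,\infty}^{n})$. The key device is to round each $a_{i}$ \emph{down} to the nearest multiple of $n$: put $c_{i}=n\lfloor a_{i}/n\rfloor$, so $0\le a_{i}-c_{i}\le n-1$ and $f_{0}^{a_{i}}=f_{c_{i}}^{\,a_{i}-c_{i}}\circ f_{0}^{c_{i}}$, i.e. $f_{0}^{a_{i}}$ factors through $f_{0}^{c_{i}}$ via a correction map $g_{i}=f_{c_{i}}^{\,a_{i}-c_{i}}$ which is a composition of at most $n-1$ of the $f_{j}$'s (the identity when $n\mid a_{i}$). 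Since $X$ is compact, equi-continuity of $f_{0,\infty}$ upgrades to uniform equi-continuity, and then the finite union over $0\le l\le n-1$ of the families $\{f_{j}^{\,l}:j\ge0\}$ is uniformly equi-continuous; hence for each $\epsilon>0$ there is $\delta(\epsilon)\in(0,\epsilon)$, with $\delta(\epsilon)\to0$ as $\epsilon\to0$, such that $d(u,v)<\delta(\epsilon)$ forces $d(g_{i}(u),g_{i}(v))<\epsilon$ for all $i$. Taking contrapositives: $d(f_{0}^{a_{i}}(x),f_{0}^{a_{i}}(y))>\epsilon$ implies $d(f_{0}^{c_{i}}(x),f_{0}^{c_{i}}(y))\ge\delta(\epsilon)$.

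Next I would let $C=\{c'_{l}\}_{l=1}^{\infty}\in\mathcal{S}$ be the strictly increasing enumeration of the (infinite, as $a_{i}\to\infty$) set of values $\{c_{i}\}$, so that $c_{j}=c'_{l(j)}$ with $l(j)\le j$. Then any $(m,\epsilon,A)$-separated subset $E$ of $X$ is $(m,\delta(\epsilon),C)$-separated: for distinct $x,y\in E$ an index $j\le m$ witnessing $d(f_{0}^{a_{j}}(x),f_{0}^{a_{j}}(y))>\epsilon$ yields the index $l(j)\le m$ witnessing $d(f_{0}^{c'_{l(j)}}(x),f_{0}^{c'_{l(j)}}(y))\ge\delta(\epsilon)$. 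Hence $s_{m}(\epsilon,A,f_{0,\infty},X)\le s_{m}(\delta(\epsilon),C,f_{0,\infty},X)$ for all $m$, and applying $\limsup_{m\to\infty}\frac1m\log(\cdot)$ and then letting $\epsilon\to0$ (so $\delta(\epsilon)\to0$) gives $h_{A}(f_{0,\infty})\le h_{C}(f_{0,\infty})$. Finally $C\subset n\mathds{Z}_{+}$, so $C=nB$ with $B=\{c'_{l}/n\}_{l=1}^{\infty}\in\mathcal{S}$, and Lemma \ref{6151}(i) gives $h_{C}(f_{0,\infty})=h_{nB}(f_{0,\infty})=h_{B}(f_{0,\infty}^{n})\le h^{*}(f_{0,\infty}^{n})$. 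Taking the supremum over $A\in\mathcal{S}$ finishes the reverse inequality, and hence $h^{*}(f_{0,\infty}^{n})=h^{*}(f_{0,\infty})$.

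The main obstacle is the transport of separation in the second step: one must round \emph{down} rather than up, so that $f_{0}^{a_{i}}$ genuinely factors through $f_{0}^{c_{i}}$, and equi-continuity is exactly what is needed to guarantee that the bounded-length correction maps $g_{i}$ cannot collapse an $\epsilon$-separation below the uniform threshold $\delta(\epsilon)$. A secondary point to verify carefully is that repetitions among the $c_{i}$ do no harm, which holds because the position $l(j)$ of $c_{j}$ in the deduplicated sequence $C$ never exceeds $j$; the rest is routine limsup bookkeeping together with Lemma \ref{6151}(i).
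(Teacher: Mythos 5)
Your proposal is correct and takes essentially the same route as the paper: the easy inequality via Lemma \ref{6151}(i), and the reverse by rounding each $a_i$ down to $n\lfloor a_i/n\rfloor$ and using equi-continuity of the length-at-most-$(n-1)$ correction maps to convert an $(m,\epsilon,A)$-separated set into a $\delta$-separated set along the block sequence, which is exactly the paper's sequence $B$ in (\ref{6141}). The only cosmetic difference is that you phrase the conclusion through $C=nB$ and Lemma \ref{6151}(i), where the paper observes directly that the same set is $(k,\delta_m,B)$-separated under $f_{0,\infty}^{n}$.
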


\begin{proof}
It follows from (\ref{7242}) that $h^{*}(f_{0,\infty}^{n})\leq h^{*}(f_{0,\infty})$.
On the other hand, we take an increasing sequence of positive integers
\begin{equation}\label{6141}
B=\{j\in\mathds{Z}_{+}: [jn,jn+(n-1)]\cap A\neq\emptyset\}=\{b_i\}_{i=1}^{\infty}.
\end{equation}
Without loss of generality, we assume that $b_1\geq1$ by (\ref{7241}).
Since $f_{0,\infty}$ is equi-continuous, for any $m\geq1$, there exists $0<\delta_m<\frac{1}{m}$ such that for any $x,y\in X$,
\begin{equation}\label{6142}
d(x,y)<\delta_m\Rightarrow d(f_{p}^{i}(x),f_{p}^{i}(y))<\frac{1}{m},\;0\leq i\leq n-1,\;\forall\;p\geq0.
\end{equation}
Let $k\geq1$ and $E\subset X$ be an $(k,\frac{1}{m},A)$-separated set under $f_{0,\infty}$ with $|E|=s_k(\frac{1}{m},A,f_{0,\infty})$.
Then for any $x\neq y\in E$, there exists $1\leq i\leq k$ such that $d(f_{0}^{a_i}(x),f_{0}^{a_i}(y))>\frac{1}{m}$.
It follows from (\ref{6141}) and (\ref{6142}) that there exists $1\leq r\leq k$ (in fact, $r\leq i$) such that
$a_i\in[b_{r}n,b_{r}n+(n-1)]$ and
\[d(f_{(b_{r}-1)n}^{n}\circ\cdots\circ f_{0}^{n}(x),f_{(b_{r}-1)n}^{n}\circ\cdots\circ f_{0}^{n}(y))
=d(f_{0}^{b_{r}n}(x),f_{0}^{b_{r}n}(y))>\delta_m,\]
which implies that $E$ is also an $(k,\delta_m,B)$-separated set under $f_{0,\infty}^n$.
Hence,
\begin{equation*}
h_{A}(f_{0,\infty})=\lim_{m\to\infty}\limsup_{k\to\infty}\frac{1}{k}\log s_k(\frac{1}{m},A,f_{0,\infty})
\leq\lim_{m\to\infty}\limsup_{k\to\infty}\frac{1}{k}\log s_k(\delta_m,B,f_{0,\infty}^n)=h_{B}(f_{0,\infty}^n),
\end{equation*}
and consequently, $h^{*}(f_{0,\infty})\leq h^{*}(f_{0,\infty}^{n})$.
\end{proof}

\begin{proposition}\label{sequence entropy inequality}
\[h^{*}(f_{i,\infty})\leq h^{*}(f_{j,\infty}),\;0\leq i\leq j<\infty.\]
\end{proposition}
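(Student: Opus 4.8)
The plan is to show that any sequence $A=\{a_i\}_{i=1}^\infty\in\mathcal{S}$ witnessing t.s.e.\ of $(X,f_{i,\infty})$ can be transferred to a sequence witnessing at least as much t.s.e.\ of $(X,f_{j,\infty})$, so that taking suprema over all sequences yields the inequality. The key observation is that the orbit maps of $f_{i,\infty}$ are initial segments of orbit maps of $f_{j,\infty}$ only after a shift: for $i\le j$ and any $m\ge 1$ one has $f_i^{\,m}=f_i^{\,j-i}\circ(\text{nothing})$ is not literally a composition appearing in $f_{j,\infty}$, but $f_j^{\,m}\circ f_i^{\,j-i}=f_i^{\,(j-i)+m}$. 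In other words, writing $g=f_i^{\,j-i}\colon X\to X$ (a single continuous, possibly non-surjective map), we have the intertwining $f_i^{\,(j-i)+m}=f_j^{\,m}\circ g$ for all $m\ge 0$. This is the structural fact the whole argument rests on.

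First I would fix $A=\{a_i\}_{i=1}^\infty\in\mathcal{S}$ with, after discarding finitely many terms using \eqref{7241}, $a_1\ge j-i$, and set $A'=\{a_i-(j-i)\}_{i=1}^\infty\in\mathcal{S}$. Given an $(n,\epsilon,A)$-separated set $E\subset X$ for $f_{i,\infty}$ — so for $x\ne y\in E$ there is $1\le \ell\le n$ with $d\big(f_i^{\,a_\ell}(x),f_i^{\,a_\ell}(y)\big)>\epsilon$ — I would push $E$ forward by $g=f_i^{\,j-i}$. Using the intertwining, $f_i^{\,a_\ell}=f_i^{\,(j-i)+(a_\ell-(j-i))}=f_j^{\,a_\ell-(j-i)}\circ g$, so the inequality $d\big(f_i^{\,a_\ell}(x),f_i^{\,a_\ell}(y)\big)>\epsilon$ says exactly $d\big(f_j^{\,a'_\ell}(g x),f_j^{\,a'_\ell}(g y)\big)>\epsilon$. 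Hence $g(E)$ is an $(n,\epsilon,A')$-separated set for $f_{j,\infty}$, and in particular $|g(E)|\ge$ something — but here is the subtlety: $g$ need not be injective on $E$, so $|g(E)|$ could be smaller than $|E|$. However, the separation condition forces injectivity: if $gx=gy$ for $x\ne y\in E$ then $f_j^{\,a'_\ell}(gx)=f_j^{\,a'_\ell}(gy)$ for all $\ell$, contradicting separation (since $a_1\ge j-i$ guarantees all the relevant indices are genuine). Therefore $g$ is injective on $E$, so $|g(E)|=|E|$, giving $s_n(\epsilon,A',f_{j,\infty})\ge s_n(\epsilon,A,f_{i,\infty})$.

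From this inequality, taking $\tfrac1n\log$, then $\limsup_{n\to\infty}$, then $\lim_{\epsilon\to0}$, yields $h_{A'}(f_{j,\infty})\ge h_A(f_{i,\infty})$; taking the supremum over $A$ (noting $A\mapsto A'$ is a map into $\mathcal{S}$) gives $h^*(f_{j,\infty})\ge h^*(f_{i,\infty})$. Since the inequality $h^*(f_{i,\infty})\le h^*(f_{j,\infty})$ is proved for consecutive indices $j=i+1$ by the above (with $g=f_i$), the general case $0\le i\le j<\infty$ follows by finite iteration, or equivalently the argument above works directly for arbitrary $i\le j$ with $g=f_i^{\,j-i}$.

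The step I expect to be the main obstacle is not the combinatorics but making sure the index bookkeeping is airtight: one must verify $a_1\ge j-i$ can be arranged without loss (this is exactly \eqref{7241}, the shift-invariance of t.s.e., which lets us drop the first few terms of $A$), and one must confirm that the separation condition indeed survives the pushforward intact — i.e.\ that no collapse of separated pairs occurs — which is the injectivity argument above. Everything else is a routine transfer of the separated-set estimate through the limits defining $ent_A=h_A$.
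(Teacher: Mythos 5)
Your proof is correct, but it runs through the dual machinery to the paper's. The paper argues with open covers: assuming $a_1\geq 2$ (again by the shift-invariance (\ref{7241})), it writes $f_i^{-a_k}\mathscr{A}=f_i^{-1}\bigl(f_{i+1}^{-(a_k-1)}\mathscr{A}\bigr)$ and uses the elementary inequality $\mathcal{N}(f_i^{-1}\mathscr{B})\leq\mathcal{N}(\mathscr{B})$ to get $h_A(f_{i,\infty},\mathscr{A})\leq h_{A'}(f_{i+1,\infty},\mathscr{A})$ with $A'=\{a_k-1\}$, passing from $i$ to $i+1$ one step at a time and then taking suprema. You instead work with separated sets and push an $(n,\epsilon,A)$-separated set forward by $g=f_i^{\,j-i}$ in one shot, using the same intertwining $f_i^{\,(j-i)+m}=f_j^{\,m}\circ g$ and the same normalization of $A$ via (\ref{7241}); the point the cover argument gets for free (preimages of a subcover form a subcover, no injectivity issue) you replace by the observation that separation forces $g$ to be injective on the separated set, which is exactly right and needs no surjectivity of the $f_n$'s. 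Both proofs rest on the identical structural idea (shift the sequence by $j-i$ while shifting the system forward), so the difference is one of implementation: yours handles arbitrary $i\leq j$ directly and stays on the metric side, the paper's stays on the cover side and iterates consecutive indices. The only bookkeeping worth making explicit in your write-up is that $A'=\{a_\ell-(j-i)\}$ must lie in $\mathcal{S}$, which you ensure by first shifting $A$ so that $a_1>j-i$; with that, the transfer $s_n(\epsilon,A',f_{j,\infty})\geq s_n(\epsilon,A,f_{i,\infty})$ and the passage to $h^*$ are exactly as you say.
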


\begin{proof}
Let $\mathscr{A}$ be an open cover of $X$ and $A=\{a_i\}_{i=1}^{\infty}\in\mathcal{S}$.
Without loss of generality, we assume that $a_1\geq2$ by (\ref{7241}). Then
\begin{equation*}
\begin{split}
\mathcal{N}(f_{i}^{-a_1}\mathscr{A}\vee f_{i}^{-a_2}\mathscr{A}\vee\cdots\vee f_{i}^{-a_n}\mathscr{A})
&=\mathcal{N}\left(f_{i}^{-1}(f_{i+1}^{-(a_{1}-1)}\mathscr{A}\vee f_{i+1}^{-(a_{2}-1)}\mathscr{A}\vee\cdots\vee f_{i+1}^{-(a_{n}-1)}\mathscr{A})\right)\\
&\leq\mathcal{N}(f_{i+1}^{-(a_{1}-1)}\mathscr{A}\vee f_{i+1}^{-(a_{2}-1)}\mathscr{A}\vee\cdots\vee f_{i+1}^{-(a_{n}-1)}\mathscr{A}).
\end{split}
\end{equation*}
Thus
\begin{equation*}
\begin{split}
h_A(f_{i,\infty},\mathscr{A})
&=\limsup_{n\to\infty}\frac{1}{n}\log\mathcal{N}\left(\bigvee_{k=1}^{n}f_{i}^{-a_k}\mathscr{A}\right)
\leq\limsup_{n\to\infty}\frac{1}{n}\log\mathcal{N}\left(\bigvee_{k=1}^{n}f_{i+1}^{-(a_{k}-1)}\mathscr{A}\right)\\
&=h_{A'}(f_{i,\infty},\mathscr{A}),
\end{split}
\end{equation*}
where $A'=\{a_{i}-1\}_{i=1}^{\infty}$. This yields that
$h^{*}(f_{i,\infty})\leq h^{*}(f_{j,\infty})$.
\end{proof}

\begin{proposition}\label{8246}
Let $f_{0,\infty}$ be a sequence of equi-continuous and surjective maps.
Then for any $A\in\mathcal{S}$,
\[h_{A}(f_{i,\infty})\geq h_{A}(f_{j,\infty}),\;0\leq i\leq j<\infty,\]
and thus
\[h^{*}(f_{i,\infty})=h^{*}(f_{j,\infty}),\;0\leq i\leq j<\infty.\]
\end{proposition}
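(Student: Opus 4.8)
The plan is to reduce the claim $h_A(f_{i,\infty})\geq h_A(f_{j,\infty})$ to the single step $j=i+1$, iterate, and then obtain the equality of suprema by taking $\sup_{A\in\mathcal S}$ and combining with Proposition \ref{sequence entropy inequality}. So fix $A=\{a_k\}_{k=1}^{\infty}\in\mathcal S$, an index $i\geq0$, and a finite open cover $\mathscr A$ of $X$; the goal is $h_A(f_{i+1,\infty},\mathscr A)\leq h_A(f_{i,\infty})$. The starting identity is $f_i^{a_k+1}=f_{i+1}^{a_k}\circ f_i$, which gives $f_i^{-(a_k+1)}\mathscr A=f_i^{-1}(f_{i+1}^{-a_k}\mathscr A)$, hence for each $n\geq1$,
\[
\bigvee_{k=1}^{n}f_i^{-(a_k+1)}\mathscr A=f_i^{-1}\left(\bigvee_{k=1}^{n}f_{i+1}^{-a_k}\mathscr A\right).
\]
Since $f_i$ is surjective, $\mathcal N(f_i^{-1}\mathscr C)=\mathcal N(\mathscr C)$ for every open cover $\mathscr C$ of $X$ (indeed, if $\{f_i^{-1}C_1,\dots,f_i^{-1}C_m\}$ covers $X$, then applying the surjection $f_i$ shows $\{C_1,\dots,C_m\}$ covers $X$; the reverse implication is immediate), so
\[
\mathcal N\left(\bigvee_{k=1}^{n}f_{i+1}^{-a_k}\mathscr A\right)=\mathcal N\left(\bigvee_{k=1}^{n}f_i^{-(a_k+1)}\mathscr A\right).
\]

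Next I would absorb the shift $a_k\mapsto a_k+1$ into a refinement of $\mathscr A$, which is where equi-continuity enters. Writing instead $f_i^{a_k+1}=f_{i+a_k}\circ f_i^{a_k}$, we have $f_i^{-(a_k+1)}\mathscr A=f_i^{-a_k}(f_{i+a_k}^{-1}\mathscr A)$. Let $\eta>0$ be a Lebesgue number of $\mathscr A$. By equi-continuity of $f_{0,\infty}$ there is $\delta>0$ with $d(x,y)<\delta\Rightarrow d(f_p(x),f_p(y))<\eta/2$ for all $p\geq0$; pick a finite open cover $\mathscr B$ of $X$ with $d(\mathscr B)<\delta$. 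Then for every $p\geq0$ and $B\in\mathscr B$ the set $f_p(B)$ has diameter less than $\eta$, hence lies in some member of $\mathscr A$, so $\mathscr B$ refines $f_p^{-1}\mathscr A$. Taking $p=i+a_k$ shows $f_i^{-a_k}\mathscr B$ refines $f_i^{-a_k}(f_{i+a_k}^{-1}\mathscr A)=f_i^{-(a_k+1)}\mathscr A$ for each $k$, so $\bigvee_{k=1}^{n}f_i^{-a_k}\mathscr B$ refines $\bigvee_{k=1}^{n}f_i^{-(a_k+1)}\mathscr A$, and therefore
\[
\mathcal N\left(\bigvee_{k=1}^{n}f_{i+1}^{-a_k}\mathscr A\right)=\mathcal N\left(\bigvee_{k=1}^{n}f_i^{-(a_k+1)}\mathscr A\right)\leq\mathcal N\left(\bigvee_{k=1}^{n}f_i^{-a_k}\mathscr B\right).
\]

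Dividing by $n$ and taking $\limsup_{n\to\infty}$ gives $h_A(f_{i+1,\infty},\mathscr A)\leq h_A(f_{i,\infty},\mathscr B)\leq h_A(f_{i,\infty})$; the supremum over all finite open covers $\mathscr A$ then yields $h_A(f_{i+1,\infty})\leq h_A(f_{i,\infty})$. Iterating from $i$ up to $j$ (each step using surjectivity of the map being peeled off) proves $h_A(f_{i,\infty})\geq h_A(f_{j,\infty})$ for every $A\in\mathcal S$; taking $\sup_{A\in\mathcal S}$ gives $h^{*}(f_{i,\infty})\geq h^{*}(f_{j,\infty})$, and together with Proposition \ref{sequence entropy inequality} this gives the claimed equality. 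The only real obstacle is the middle step: one has to notice that the nuisance ``$+1$'' in the time set is harmless precisely because equi-continuity furnishes a single fine cover $\mathscr B$ refining $f_p^{-1}\mathscr A$ uniformly in $p$, whereas surjectivity is exactly what permits passing from $f_{i+1,\infty}$ back to $f_{i,\infty}$ — each hypothesis is used once, and for a different inequality.
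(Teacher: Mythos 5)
Your proof is correct, but it follows a different route from the paper. The paper disposes of the inequality $h_A(f_{i,\infty})\geq h_A(f_{i+1,\infty})$ in one stroke: it observes that $(X,f_{k,\infty})$ is topologically $\{f_{k+n}\}_{n\geq0}$-equi-semiconjugate to $(X,f_{k+1,\infty})$ (the factor maps are the $f_{k+n}$ themselves, which are surjective and equi-continuous by hypothesis, and the required diagram commutes trivially), and then invokes Lemma \ref{22}, the invariance of t.s.e.\ under equi-semiconjugacy; the conclusion for $h^{*}$ is obtained, as in your argument, by combining with Proposition \ref{sequence entropy inequality}. You instead give a direct cover-counting proof of the one-step inequality: the identity $\bigvee_{k}f_i^{-(a_k+1)}\mathscr A=f_i^{-1}\bigl(\bigvee_{k}f_{i+1}^{-a_k}\mathscr A\bigr)$ together with surjectivity of $f_i$ (so that $\mathcal N(f_i^{-1}\mathscr C)=\mathcal N(\mathscr C)$), and then equi-continuity plus a Lebesgue number of $\mathscr A$ to produce a single fine cover $\mathscr B$ refining $f_p^{-1}\mathscr A$ uniformly in $p$, which absorbs the time shift $a_k\mapsto a_k+1$. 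All the individual steps check out (preimages commute with joins, refinement passes through preimages and joins, and the argument iterates from $i$ to $j$ since the hypotheses hold for the whole sequence), so this is a sound alternative. What the paper's route buys is brevity and a conceptual framing — the shift family $f_{i,\infty}\mapsto f_{i+1,\infty}$ is a factor in the equi-semiconjugacy sense, so monotonicity is an instance of a general invariance principle; what your route buys is self-containedness (you do not lean on the externally cited Lemma \ref{22} from \cite{Shao22}) and full transparency about where surjectivity versus equi-continuity enters — in effect you have reproved the relevant special case of that lemma.
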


\begin{proof}
Fix $k\geq0$. It is easy to observe that $(X,f_{k,\infty})$ is topologically $\{f_n\}_{n=k}^{\infty}$-equi-semiconjugate to
$(X,f_{k+1,\infty})$, since $\{f_n\}_{n=k}^{\infty}$ is a sequence of equi-continuous and surjective maps such
that the following diagram commutes:
\begin{center}
 \begin{tikzpicture}[scale=0.58]
 \draw [->] (0, 0)--(3, 0)node[right]{};
       \node (a) at (3.4,0) {\tiny$X$};
       \node (a) at (-0.4,0) {\tiny$X$};
       \node (a) at (1.5,0.4) {\tiny$f_k$};
    \draw    [->] (3.8, 0)--(6.8, 0)node[right]{};
        \node (a) at (7.2,0) {\tiny$X$};
       \node (a) at (5.4,0.4) {\tiny$f_{k+1}$};
        \draw    [->] (7.6, 0)--(10.6, 0)node[right]{};
       \draw    [->] (-0.4, -0.4)--(-0.4, -3.4)node[right]{};
       \node (a) at (-0.4,-3.8) {\tiny$X$};
       \draw [->] (0, -3.8)--(3, -3.8)node[right]{};
       \node (a) at (1.5,-3.4) {\tiny$f_{k+1}$};
       \node (a) at (3.4,-3.8) {\tiny$X$};
       \draw    [->] (3.4, -0.4)--(3.4, -3.4)node[right]{};
       \draw    [->] (3.8, -3.8)--(6.8, -3.8)node[right]{};
       \node (a) at (7.2,-3.8) {\tiny$X$};
       \node (a) at (5.4,-3.4) {\tiny$f_{k+2}$};
       \draw    [->] (7.6, -3.8)--(10.6, -3.8)node[right]{};
        \draw    [->] (7.2, -0.4)--(7.2, -3.4)node[right]{};
         \node (a) at (0,-1.8) {\tiny$f_{k}$};
         \node (a) at (4,-1.8) {\tiny$f_{k+1}$};
         \node (a) at (7.8,-1.8) {\tiny$f_{k+2}$};
          \node (a) at (11.5,0) {\tiny$\cdots$};
          \node (a) at (11.5,-3.8) {\tiny$\cdots$};
 \end{tikzpicture}
\end{center}
that is,
\[f_{k+n+1}\circ f_{k+n}=f_{k+n+1}\circ f_{k+n},\;n\geq0.\]
By Lemma \ref{22}, we have
\[h_A(f_{k,\infty})\geq h_A(f_{k+1,\infty}),\;A\in\mathcal{S}.\]
Thus,
\[h_{A}(f_{i,\infty})\geq h_{A}(f_{j,\infty}),\;0\leq i\leq j<\infty.\]
Hence,
\[h^{*}(f_{i,\infty})\geq h^{*}(f_{j,\infty}),\;0\leq i\leq j<\infty.\]
This, together with Proposition \ref{sequence entropy inequality}, implies that
\[h^{*}(f_{i,\infty})=h^{*}(f_{j,\infty}),\;0\leq i\leq j<\infty.\]
\end{proof}

\begin{lemma}\cite{Bali99a,Wu17}\label{commu}
Let $f,g: X\to X$ be continuous maps and $A\in\mathcal{S}$. Then

{\rm(i)} $h^{*}(f\circ g)=h^{*}(g\circ f)$;

{\rm(ii)} $h_A(f\circ g)=h_A(g\circ f)$ if $f,g$ are surjective.
\end{lemma}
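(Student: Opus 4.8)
The plan is to reduce the composition-swap identity to the commutation of the orbit maps. Write $h=f\circ g$ and $\tilde h=g\circ f$, so that $h^n=(f\circ g)^n$ and $\tilde h^n=(g\circ f)^n$. The elementary observation is that $f$ conjugates these two maps in the sense that $f\circ (g\circ f)^n=(f\circ g)^n\circ f$ for every $n\geq 0$; equivalently, $(f\circ g)^{n}\circ f=f\circ (g\circ f)^{n}$. Fixing a sequence $A=\{a_i\}_{i=1}^\infty\in\mathcal S$, this gives $h_{0}^{a_i}\circ f=f\circ \tilde h_{0}^{a_i}$ for each $i$, where $h_0^m$ and $\tilde h_0^m$ denote the $m$-fold iterates of $h$ and $\tilde h$ respectively (in the autonomous setting these are just $h^m$ and $\tilde h^m$). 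Thus $f$ is a continuous surjection-free semiconjugacy from a shifted version of the $\tilde h$-dynamics to the $h$-dynamics; I will exploit this by transporting open covers and spanning/separated sets across $f$.

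For part (ii), assume $f,g$ surjective, hence $h=f\circ g$ and $\tilde h=g\circ f$ are surjective. First I would show $h_A(f\circ g)=h_{\sigma(A)}(\tilde h)$ using the separated-set formulation. Given an $(n,\epsilon,A)$-separated set $E$ for $h$, I claim $f(E)$ — or rather a suitable preimage argument — yields a comparably sized separated set for $\tilde h$ along the shifted sequence: using $(f\circ g)^{a_i}=f\circ(g\circ f)^{a_i-1}\circ g$ and uniform continuity of $f$ on the compact space $X$, points whose $h$-orbits at times $a_1,\dots,a_n$ are $\epsilon$-separated have $\tilde h$-orbits at times $a_1-1,\dots,a_n-1$ separated by some $\epsilon'=\epsilon'(\epsilon)\to 0$. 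Running the symmetric argument with the roles of $f,g$ (hence of $h,\tilde h$) swapped, and combining with Lemma \ref{6151}(ii) (which says $h_A=h_{\sigma^k(A)}$ so the shift is harmless), I get $h_A(f\circ g)=h_A(g\circ f)$. The surjectivity is what lets me go back and forth: without it one only gets a semiconjugacy in one direction, which by Lemma \ref{22} gives a single inequality.

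For part (i), surjectivity is not available, so I would instead pass to the supremum over all sequences and use the inequality direction of Lemma \ref{22} twice. The commutation $f\circ(g\circ f)^n=(f\circ g)^n\circ f$ shows that for any $A$ there is $A'$ (a shift of $A$) with $h_{A'}(f\circ g)\leq h_A(g\circ f)$, because $f$ semiconjugates; taking suprema over $A$ and using that shifting is a bijection of $\mathcal S$ (again Lemma \ref{6151}(ii)) gives $h^{*}(f\circ g)\leq h^{*}(g\circ f)$. Swapping $f$ and $g$ gives the reverse inequality, hence equality.

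The main obstacle is the quantitative control in part (ii): transporting a separated (or spanning) set across $f$ requires that $f$ does not collapse separation too badly, which is where uniform continuity on the compact metric space $X$ enters, and one must be careful that the $\epsilon'$ produced is uniform in $n$ and tends to $0$ with $\epsilon$ so that the outer limit $\lim_{\epsilon\to 0}$ survives. The bookkeeping with the index shift $a_i\mapsto a_i-1$ (requiring $a_1\geq 2$, arranged via Lemma \ref{6151}(ii) as in the proof of Proposition \ref{sequence entropy inequality}) is routine but must be done on both sides symmetrically.
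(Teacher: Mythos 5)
Note first that the paper does not prove Lemma \ref{commu} at all; it is quoted from \cite{Bali99a,Wu17}, so there is no in-paper proof to compare with. Judged on its own terms, your strategy (transport separated sets across the inner map using $(f\circ g)^{m}=f\circ(g\circ f)^{m-1}\circ g$ and uniform continuity, use the supremum over sequences to absorb the time shift in (i), and use surjectivity in (ii)) is the standard and workable skeleton, but as written it has two genuine gaps. First, the operation produced by the identity $(f\circ g)^{a_i}=f\circ(g\circ f)^{a_i-1}\circ g$ is the decrement $A=\{a_i\}\mapsto\{a_i-1\}$ of every term, not the shift $\sigma(A)=\{a_{i+1}\}$; Lemma \ref{6151}(ii) only says $h_A=h_{\sigma^k(A)}$ and says nothing about $\{a_i-1\}$. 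So in (ii) your chain ``$h_A(f\circ g)=h_{\sigma(A)}(g\circ f)$, then apply Lemma \ref{6151}(ii)'' does not close: what you actually obtain is $h_A(f\circ g)\leq h_{\{a_i-1\}}(g\circ f)$, and the missing step $h_{\{a_i-1\}}(g\circ f)\leq h_A(g\circ f)$ requires its own argument (it is true for surjective maps: lift an $(n,\epsilon,\{a_i-1\})$-separated set through exact preimages of $g\circ f$, with no epsilon loss), which you neither state nor prove and which cannot be pinned on Lemma \ref{6151}(ii). Second, in (i) you appeal to Lemma \ref{22} ``because $f$ semiconjugates,'' but in this paper (equi-)semiconjugacy is by definition via \emph{surjective} maps, and in (i) no surjectivity is assumed; the one-sided inequality $h_A(f\circ g)\leq h_{\{a_i-1\}}(g\circ f)$ must instead come from the uniform-continuity/separated-set transport (the mechanism you only sketch in (ii), and which indeed needs no surjectivity, since separated points at a time $a_i\geq1$ cannot share a $g$-image); after that, taking suprema and swapping $f,g$ does finish (i), with Lemma \ref{6151}(ii) used only to discard a possible $a_1=0$.

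Two further remarks. In (ii) you transport $f(E)$; it should be $g(E)$ (the inner map in $(f\circ g)^{a_i}=f\circ(g\circ f)^{a_i-1}\circ g$). And once $f,g$ are surjective there is a much shorter proof of (ii) that avoids all sequence bookkeeping: $g\circ(f\circ g)=(g\circ f)\circ g$ exhibits $(X,f\circ g)$ as topologically $\{g\}$-equi-semiconjugate to $(X,g\circ f)$, and $f\circ(g\circ f)=(f\circ g)\circ f$ gives the reverse semiconjugacy, so Lemma \ref{22} applied twice yields $h_A(f\circ g)=h_A(g\circ f)$ for the same $A$ directly.
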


We have two consequences of Lemma \ref{commu}.

\begin{corollary}
Let $A\in\mathcal{S}$. Then for each $n\geq1$ and $1\leq i\leq n$,
\begin{itemize}
\item[{\rm(i)}] \[h^{*}(f_{n}\circ\cdots\circ f_{0})=h^{*}(f_{i-1}\circ\cdots\circ f_{0}\circ f_{n}\circ\cdots\circ f_{i});\]

\item[{\rm(ii)}] \[h_{A}(f_{n}\circ\cdots\circ f_{0})=h_{A}(f_{i-1}\circ\cdots\circ f_{0}\circ f_{n}\circ\cdots\circ f_{i}),\]
if $f_i$ is surjective for each $0\leq i\leq n$.
\end{itemize}
\end{corollary}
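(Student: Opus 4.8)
The plan is to obtain both identities as immediate specializations of Lemma \ref{commu} by grouping the long composition into two blocks and applying commutation. Fix $n\geq1$ and $1\leq i\leq n$, and set
\[
g=f_{i-1}\circ\cdots\circ f_{0},\qquad f=f_{n}\circ\cdots\circ f_{i}.
\]
Then $f\circ g=f_{n}\circ\cdots\circ f_{0}$, while $g\circ f=f_{i-1}\circ\cdots\circ f_{0}\circ f_{n}\circ\cdots\circ f_{i}$, so the two sides of the identity in (i) are exactly $h^{*}(f\circ g)$ and $h^{*}(g\circ f)$, and the two sides of the identity in (ii) are exactly $h_{A}(f\circ g)$ and $h_{A}(g\circ f)$.

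For (i), I would simply invoke Lemma \ref{commu}(i): since each $f_j$ is a continuous self-map of $X$, both $f$ and $g$ are continuous self-maps of $X$, and Lemma \ref{commu}(i) gives $h^{*}(f\circ g)=h^{*}(g\circ f)$, which is precisely the asserted equality. For (ii), the additional hypothesis that $f_j$ is surjective for every $0\leq j\leq n$ is used exactly to make Lemma \ref{commu}(ii) applicable: a finite composition of surjections is again surjective, so $f=f_{n}\circ\cdots\circ f_{i}$ and $g=f_{i-1}\circ\cdots\circ f_{0}$ are both surjective, and Lemma \ref{commu}(ii) yields $h_{A}(f\circ g)=h_{A}(g\circ f)$, as desired.

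There is no genuine obstacle here; the only thing that needs a moment's care is the index bookkeeping—namely checking that the two blocks $f_{i-1}\circ\cdots\circ f_{0}$ and $f_{n}\circ\cdots\circ f_{i}$ are well defined for the full range $1\leq i\leq n$ (in the boundary cases $i=1$ and $i=n$ one of the two blocks degenerates to a single map $f_{0}$ or $f_{n}$, which is still continuous, and surjective under the hypothesis of (ii)), and that composing these blocks in the two possible orders reproduces exactly the maps written in the statement. Once this is verified, both parts follow at once from Lemma \ref{commu}.
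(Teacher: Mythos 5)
Your proposal is correct and is essentially the paper's own argument: split $f_{n}\circ\cdots\circ f_{0}$ into the two blocks $f_{n}\circ\cdots\circ f_{i}$ and $f_{i-1}\circ\cdots\circ f_{0}$ and apply Lemma \ref{commu}(i) for (i) and Lemma \ref{commu}(ii) for (ii), the surjectivity hypothesis ensuring the two blocks are surjective. No gaps; the index bookkeeping you note is the only point of care and you handle it correctly.
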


\begin{proof}
By Lemma \ref{commu} (i), one gets that
\begin{equation*}
\begin{split}
&h^{*}(f_{n}\circ\cdots\circ f_{0})=h^{*}\left((f_{n}\circ\cdots\circ f_{i})\circ(f_{i-1}\circ\cdots\circ f_{0})\right)\\
&=h^{*}((f_{i-1}\circ\cdots\circ f_{0})\circ(f_{n}\circ\cdots\circ f_{i}))
=h^{*}(f_{i-1}\circ\cdots\circ f_{0}\circ f_{n}\circ\cdots\circ f_{i}).
\end{split}
\end{equation*}
The second equality can be shown similarly by Lemma \ref{commu} (ii).
\end{proof}

\section{T.S.E. on the induced space of probability measures}

In this section, we shall study the relations between zero t.s.e. (resp. positive t.s.e.)
of $(X,f_{0,\infty})$ and that of $(\mathcal{M}(X),\hat{f}_{0,\infty})$. In particular, we obtain some interesting
comparisons between t.s.e. of NDSs and that of classical dynamical systems as applications.

\subsection{Relations on zero t.s.e.}

First, we need the following two lemmas.

\begin{lemma}\cite{Shao22}\label{7271}
$h_A(f_{0,\infty})\leq h_A(\hat{f}_{0,\infty})$ for any $A\in\mathcal{S}$.
\end{lemma}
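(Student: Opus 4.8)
The plan is to realize $(X,f_{0,\infty})$ as a subsystem of $(\mathcal{M}(X),\hat f_{0,\infty})$ through the canonical embedding $x\mapsto\delta_x$, and then to combine the equi-conjugacy invariance of Lemma \ref{22} with the elementary monotonicity of t.s.e.\ under passage to a subset. A direct spanning/separated-set argument on all of $\mathcal{M}(X)$ would fail, since $\mathcal{M}(X)$ is far larger than the copy of $X$ inside it; routing through the invariant subset of Dirac measures is the point.

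Concretely, I would set $\iota\colon X\to\mathcal{M}(X)$, $\iota(x)=\delta_x$. This map is continuous for the weak$^*$-topology (because $\int g\,d\delta_x=g(x)$ depends continuously on $x$ for every continuous $g$) and injective; since $X$ is compact and $\mathcal{M}(X)$ is Hausdorff, $\iota$ is a homeomorphism onto its image $X_0:=\{\delta_x:x\in X\}$, which is compact, hence closed, in $\mathcal{M}(X)$. Moreover $\iota$ and $\iota^{-1}\colon X_0\to X$ are continuous maps between compact metric spaces, so they are uniformly continuous, and therefore the constant families $\{\iota\}$ and $\{\iota^{-1}\}$ are equi-continuous. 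Using the identity $\hat f_n(\delta_x)=\delta_{f_n(x)}$ recorded in the excerpt, we get $\hat f_n\circ\iota=\iota\circ f_n$ for all $n\ge0$; in particular $\hat f_n(X_0)\subseteq X_0$, so $X_0$ is $\hat f_{0,\infty}$-invariant and $(X_0,\hat f_{0,\infty}|_{X_0})$ is a genuine NDS, and the commuting relation says exactly that $\iota$ is a topological equi-conjugacy between $(X,f_{0,\infty})$ and $(X_0,\hat f_{0,\infty}|_{X_0})$.

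With these pieces in place the conclusion is immediate. By Lemma \ref{22}, $h_A(f_{0,\infty})=h_A(\hat f_{0,\infty}|_{X_0})=h_A(\hat f_{0,\infty},X_0)$ for every $A\in\mathcal{S}$. Since $X_0\subseteq\mathcal{M}(X)$, the trivial monotonicity $h_A(\hat f_{0,\infty},\Lambda)\le h_A(\hat f_{0,\infty})$ for $\Lambda\subseteq\mathcal{M}(X)$ (an $(n,\epsilon,A)$-separated subset of $X_0$ is one of $\mathcal{M}(X)$, exactly the elementary inequality already used earlier in the paper) gives $h_A(\hat f_{0,\infty},X_0)\le h_A(\hat f_{0,\infty})$. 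Chaining the two yields $h_A(f_{0,\infty})\le h_A(\hat f_{0,\infty})$. I do not expect a serious obstacle: the only items needing care are verifying that $\iota$ and $\iota^{-1}$ are uniformly continuous — automatic by compactness, but essential so that Lemma \ref{22} applies — and checking that $X_0$ is closed and invariant so that restricting $\hat f_{0,\infty}$ to it is legitimate.
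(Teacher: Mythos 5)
Your proof is correct, and it follows the standard route: the paper itself only cites this lemma from \cite{Shao22} without reproving it, and your argument --- embedding $X$ into $\mathcal{M}(X)$ by $x\mapsto\delta_x$, noting $\hat f_n(\delta_x)=\delta_{f_n(x)}$ makes the set of Dirac measures a closed invariant subsystem equi-conjugate to $(X,f_{0,\infty})$, then applying Lemma \ref{22} together with monotonicity of t.s.e.\ under passage to a subset --- is exactly the argument behind the cited result. Indeed it is the $k=1$ case of the technique the paper itself uses in the proof of Theorem \ref{sequence entropy 1}, where the map $\pi$ of (\ref{811}) plays the role of your $\iota$ and the inequality $h_A(\hat f_{0,\infty})\geq h_A(\hat f_{0,\infty},\Omega)=h_A(f_{0,\infty}^{(k)})$ is obtained in the same way.
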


Let $l_1^n=\mathbb{R}^n$ endowed with the norm $\left\|x\right\|=\sum_{i=1}^{n}|x_i|$
and $l_{\infty}^n=\mathbb{R}^n$ endowed with the norm $\left\|x\right\|_{\infty}=\max_{1\leq i\leq n}\{|x_i|\}$
for each $n\geq1$, and let $\mathcal{C}(X,\mathds{R})$ be the space of bounded real-valued continuous functions
on $X$ with the norm $\left\|\varphi\right\|=\sup_{x\in X}|\varphi(x)|$ for any $\varphi\in\mathcal{C}(X,\mathds{R})$.

\begin{lemma}\cite{Glasner}\label{321}
For given constants $\epsilon>0$ and $b>0$, there exist a positive integer $n_0$ and a constant $c>0$ such that
for all $n>n_0$, if $\phi$ is a linear mapping from $l_1^m$ to $l_{\infty}^n$ of norm
\[\left\|\phi\right\|=\sup\{\left\|\phi(x)\right\|_{\infty}: x\in l_1^m,\;\left\|x\right\|\leq1\}\leq1,\]
and if $\phi(B_1(l_1^m))$ contains more than $2^{bn}$ points that are $\epsilon$-separated, then $m\geq2^{cn}$,
where $B_1(l_1^m)=\{y\in l_1^m: \left\|y\right\|\leq1\}$.
\end{lemma}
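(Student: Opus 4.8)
The plan is to treat this as a purely combinatorial--geometric statement about finite-dimensional normed spaces (it has no dynamical content) and prove it by the ``empirical measure'' / Maurey-type argument, which is precisely the combinatorial device at the heart of Glasner and Weiss's method. First I would rewrite the hypothesis geometrically: the unit ball $B_1(l_1^m)$ is the cross-polytope $\mathrm{conv}\{\pm e_1,\dots,\pm e_m\}$, so $\phi(B_1(l_1^m))=\mathrm{conv}\{\pm\phi(e_1),\dots,\pm\phi(e_m)\}$, and the bound $\|\phi\|\le 1$ forces $\|\phi(e_j)\|_\infty\le 1$ for every $j$. Hence $\phi(B_1(l_1^m))$ is the convex hull of $2m$ points $v_1,\dots,v_{2m}$ sitting in the cube $[-1,1]^n$, and the problem becomes: how many points, pairwise $\epsilon$-separated in $l_\infty^n$, can lie in the convex hull of $2m$ points of $[-1,1]^n$?

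Second, I would establish the key approximation step: every point $x$ of such a convex hull lies within $\epsilon/2$, in the $l_\infty^n$ norm, of an average $\tfrac1k\sum_{t=1}^k v_{j_t}$ of $k$ of the generating points (repetitions allowed), for a suitable $k=k(\epsilon,n)$. Indeed, writing $x=\sum_j\lambda_j v_j$ with $\lambda$ a probability vector, take i.i.d. vertices $V_1,\dots,V_k$ with law $\lambda$; then $\mathbb{E}\big[\tfrac1k\sum_t V_t\big]=x$, and since each coordinate of each $V_t$ lies in $[-1,1]$, Hoeffding's inequality gives $\Pr\big[|\tfrac1k\sum_t (V_t)_\ell-x_\ell|>\epsilon/2\big]\le 2e^{-k\epsilon^2/8}$ for each coordinate $\ell$. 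A union bound over the $n$ coordinates shows that once $k>8\epsilon^{-2}\ln(2n)$ this failure probability is less than $1$, so a good $k$-term average exists. Consequently $\phi(B_1(l_1^m))$ has an $(\epsilon/2)$-net consisting of such averages, and there are at most $(2m)^k$ of them.

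Third, I would finish by pigeonhole: if $\phi(B_1(l_1^m))$ contained more than $2^{bn}$ pairwise $\epsilon$-separated points, two of them would be $(\epsilon/2)$-approximated by the same $k$-term average and would then be at distance at most $\epsilon$ from each other, contradicting $\epsilon$-separation; hence the separated family injects into the net and $2^{bn}<(2m)^k$, i.e. $2m>2^{bn/k}$. Choosing $n_0$ large and $c>0$ small enough, depending only on $\epsilon$ and $b$, this yields $m\ge 2^{cn}$ for all $n>n_0$. The main obstacle is entirely quantitative and lives in the second step: because $l_\infty$-separation is witnessed by a single coordinate, the union bound over all $n$ coordinates forces $k$ to grow like $\log n$, and one must check carefully that, with $\epsilon,b$ fixed and $n$ large, the inequality $2m>2^{bn/k}$ still delivers an exponential-in-$n$ lower bound on $m$ — this calibration of $n_0$ and $c$ (and, if one insists on keeping the exponent cleanly linear in $n$, a sharpening of the approximation scheme) is the delicate point.
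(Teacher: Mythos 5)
There is a genuine gap, and it sits exactly where you flagged it but did not resolve it: the final calibration cannot be made to work within your covering-number framework. Your Maurey/empirical step (which is correct as far as it goes) forces $k\geq 8\epsilon^{-2}\ln(2n)$ because the $\epsilon/2$-approximation must hold simultaneously in all $n$ coordinates of $l_\infty^n$, so the union bound makes $k$ grow like $\log n$. The pigeonhole then gives $2^{bn}<(2m)^k$, i.e. $\log_2(2m)>bn/k\gtrsim \dfrac{b\epsilon^2 n}{C\log n}$, hence only $m\geq 2^{c\,n/\log n}$. That is strictly weaker than the asserted conclusion $m\geq 2^{cn}$: no choice of a constant $c>0$ and threshold $n_0$ (both of which must depend only on $\epsilon$ and $b$, being fixed before the quantifier over $n$) turns a bound of order $2^{cn/\log n}$ into $2^{cn}$ for all large $n$. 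The loss is not a bookkeeping issue but intrinsic to the approach: the sparse-approximation net has size $(2m)^{\Theta(\epsilon^{-2}\log n)}$, so counting separated points against a net can never certify more than an $n/\log n$ exponent. Note also that the weaker bound would not suffice for the way the lemma is used in Theorem \ref{sequence entropy}: there one needs $k_{n_i}\geq 2^{cn_i}$ to obtain $h_A(f_{0,\infty})\geq c\log 2>0$, whereas $k_{n_i}\geq 2^{cn_i/\log n_i}$ only yields the useless lower bound $0$.

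For comparison, the paper does not prove this lemma at all; it is quoted from Glasner and Weiss, whose argument is genuinely different and avoids the coordinate union bound. Roughly: discretize the $\epsilon$-separated family and apply a Sauer--Shelah (Sauer--Perles--Shelah) type extraction to find a set $I$ of at least $\delta n$ coordinates, a threshold for each, and points of $\phi(B_1(l_1^m))$ realizing every above/below pattern on $I$ with a definite gap $\epsilon/4$; dually, the functionals $\phi^{*}(e_i^{*})$, $i\in I$, then span an isomorphic copy of $l_1^{|I|}$ inside $l_\infty^m$ with distortion depending only on $\epsilon$. A Hoeffding-type counting argument (for each of the $m$ coordinates of $l_\infty^m$, only an exponentially small fraction of the $2^{|I|}$ sign patterns can be witnessed by that coordinate) then forces $m\geq e^{c|I|}\geq 2^{cn}$, which is the exponential-in-$n$ bound your net argument cannot reach. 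If you want a self-contained proof, you need an ingredient of this shattering/$l_1$-embedding type; the empirical-measure step alone cannot be ``sharpened'' to remove the $\log n$.
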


Similarly to a single map, we call $(X,f_{0,\infty})$ to be null if $h_A(f_{0,\infty})=0$ for any $A\in\mathcal{S}$,
i.e. $h^{*}(f_{0,\infty})=0$.

\begin{theorem}\label{sequence entropy}
Let $A\in\mathcal{S}$. Then $h_A(f_{0,\infty})=0$ if and only if $h_A(\hat{f}_{0,\infty})=0$.
Consequently, $(X,f_{0,\infty})$ is null if and only if $(\mathcal{M}(X),\hat f_{0,\infty})$ is null.
\end{theorem}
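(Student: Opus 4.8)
The plan is to prove the two implications of the biconditional $h_A(f_{0,\infty})=0 \Leftrightarrow h_A(\hat f_{0,\infty})=0$ separately. The forward direction, "$h_A(\hat f_{0,\infty})=0 \Rightarrow h_A(f_{0,\infty})=0$", is immediate from Lemma \ref{7271}, which gives $h_A(f_{0,\infty}) \le h_A(\hat f_{0,\infty})$. So the entire content lies in the reverse direction: assuming $h_A(f_{0,\infty})=0$, we must show $h_A(\hat f_{0,\infty})=0$. The final sentence about nullity is then purely formal: $(X,f_{0,\infty})$ null means $h_A(f_{0,\infty})=0$ for every $A\in\mathcal S$, and by the biconditional just proved this is equivalent to $h_A(\hat f_{0,\infty})=0$ for every $A\in\mathcal S$, i.e. $(\mathcal M(X),\hat f_{0,\infty})$ is null. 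I will only need to write a line or two for this corollary once the main equivalence is established.

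For the reverse direction I would argue by contraposition: suppose $h_A(\hat f_{0,\infty}) = b > 0$ and derive that $h_A(f_{0,\infty})>0$. Fix $A=\{a_i\}_{i=1}^\infty$. Positive t.s.e. of $(\mathcal M(X),\hat f_{0,\infty})$ means there is a metric $\epsilon>0$ such that, for infinitely many $n$, the system admits an $(n,\epsilon,A)$-separated set in $\mathcal M(X)$ of cardinality at least $2^{bn}$ (shrinking $b$ if necessary so that it is a genuine exponential growth rate along a subsequence). Using that $\mathcal M_\infty(X)=\bigcup_m \mathcal M_m(X)$ is dense in $\mathcal M(X)$ and that each $\hat f_0^{a_i}$ is continuous (hence uniformly continuous on the compact space $\mathcal M(X)$), I can perturb these measures and assume the separated set sits inside $\mathcal M_m(X)$ for some $m$ — but crucially $m$ must be controlled, and it is here that Lemma \ref{321} (the Glasner–Weiss linear-algebra lemma) enters. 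The idea, following the Glasner–Weiss / Qiao–Zhou method: identify a measure $\mu = \frac1m\sum_{j=1}^m \delta_{x_j}\in\mathcal M_m(X)$ with the point $(x_1,\dots,x_m)\in X^m$; the map sending $\mu$ to the vector of values $\big(\varphi_1(\hat f_0^{a_1}\mu),\dots\big)$ against a suitable finite family of test functions $\varphi\in\mathcal C(X,\mathbb R)$ with $\|\varphi\|\le 1$ becomes, after restricting to $\mathcal M_m(X)$, essentially a linear map $l_1^m \to l_\infty^n$ of norm $\le 1$ (since $\hat f_0^{a_i}(\frac1m\sum\delta_{x_j}) = \frac1m\sum \delta_{f_0^{a_i}(x_j)}$ is linear-affine in the $\delta_{x_j}$). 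If the image of the unit ball contains more than $2^{bn}$ $\epsilon$-separated points, Lemma \ref{321} forces $m \ge 2^{cn}$ for a fixed $c>0$ and all large $n$.

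The payoff of this dimension bound is that an $(n,\epsilon,A)$-separated set of size $2^{bn}$ in $\mathcal M_m(X)$ with $m \ge 2^{cn}$ must, by a counting/pigeonhole argument on the $m$ coordinates $x_1,\dots,x_m\in X$, force the underlying system $(X,f_{0,\infty})$ itself to have an $(n,\epsilon',A)$-separated set whose cardinality grows exponentially in $n$; concretely, two distinct separated measures $\mu,\nu$ must differ on some Borel set at some time $a_i$, and this difference, given the weak$^*$ topology and the Prohorov metric, propagates down to a genuine $\epsilon'$-separation of some pair $f_0^{a_i}(x_j), f_0^{a_i}(y_{j'})$ in $X$ — and the number of such realizable orbit patterns in $X^m$ is what must be large. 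Quantitatively one gets $\log s_n(\epsilon', A, f_{0,\infty}) \gtrsim cn - O(1)$ along the subsequence, hence $h_A(f_{0,\infty}) \ge$ some positive multiple of $c > 0$. I expect the main obstacle to be exactly this last step: correctly setting up the linear map so that Lemma \ref{321} applies (choosing the finite test-function family, verifying the norm bound, relating $\epsilon$-separation of image vectors to $\epsilon$-separation in the Prohorov metric), and then converting the resulting exponential lower bound on $\dim$ back into an exponential lower bound on separated sets in $X$ — this is the heart of the Glasner–Weiss argument and requires care with the uniform-continuity constants of the finitely many maps $\hat f_0^{a_1},\dots,\hat f_0^{a_n}$, but in the nonautonomous setting these are no worse than in the autonomous case since only finitely many compositions $f_0^{a_i}$ are involved for each fixed $n$. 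Once this is done, the contrapositive is complete, and with Lemma \ref{7271} handling the easy direction, both the stated equivalence and the nullity corollary follow.
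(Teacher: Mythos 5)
Your overall strategy --- the easy direction from Lemma \ref{7271}, the hard direction by contraposition via the Glasner--Weiss Lemma \ref{321} --- is the same as the paper's, but your implementation has a genuine gap, and it sits exactly where you flag ``the main obstacle''. You take $m$, the dimension of the $l_1$-domain, to be the number of atoms of measures in $\mathcal{M}_m(X)$ approximating the separated set. Two problems. First, the identification $\frac1m\sum_j\delta_{x_j}\leftrightarrow(x_1,\dots,x_m)$ does not produce a single linear map $l_1^m\to l_\infty^{Kn}$ of norm $\le 1$ whose unit-ball image contains all $e^{bn}$ separated points: linearity is in the weights over a \emph{fixed} set of atoms, while your separated measures have different supports, so there is no common domain unless the support set is fixed in advance; moreover density of $\mathcal{M}_\infty(X)$ only gives closeness at time $0$ (or, with uniform continuity, along the finitely many times $a_1,\dots,a_n$ at a cost, but still with no control on how many atoms are needed), so $m$ is not tied to anything dynamical. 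Second, even granting a bound $m\ge 2^{cn}$ on the number of atoms, ``many atoms are needed'' does not convert, by the pigeonhole sketch you give, into an $(n,\epsilon',A)$-separated set in $X$ of exponential size; that conversion is precisely the content of the theorem and is left unproved in your proposal.

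The paper's proof removes both difficulties by choosing the $l_1$-coordinates dynamically: take a finite open cover $\alpha$ of $X$ of diameter $<\delta$ (with $\delta$ from the uniform continuity of the finitely many test functions $g_1,\dots,g_K$ realizing the Prohorov metric), set $m=k_{n_i}=\mathcal{N}\bigl(\bigvee_{t=1}^{n_i}f_{0}^{-a_t}\alpha\bigr)$, disjointify a minimal subcover into cells $C_1,\dots,C_m$, pick $y_j\in C_j$, and send an \emph{arbitrary} (not necessarily atomic) measure $\mu$ to its cell-mass vector $\psi(\mu)=(\mu(C_1),\dots,\mu(C_m))\in B_1(l_1^m)$, composed with the linear map $\phi(x)=\bigl(2^{-n}\sum_j g_n(f_{0}^{a_t}(y_j))x_j\bigr)_{n\le K,\;t\le n_i}$. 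Because each $C_j$ lies in one element of $\bigvee_t f_{0}^{-a_t}\alpha$, the vector $\phi(\psi(\mu))$ approximates $\bigl(2^{-n}\int g_n\,d\hat f_{0}^{a_t}\mu\bigr)$ within $\epsilon/9$ in every coordinate, so Prohorov $\epsilon$-separation of the measures forces $\epsilon/(9\cdot 2^{K})$-separation of the images, and Lemma \ref{321} yields $k_{n_i}\ge 2^{cn_i}$ --- which \emph{is} the statement $h_A(f_{0,\infty},\alpha)>0$, with no further conversion step. If you want to salvage your atomic-measure picture, you would have to force a common support that is an $(n,\delta,A)$-spanning set, so that $m=r_n(\delta,A,f_{0,\infty})$ and the approximation is good along the whole orbit segment; as written, your $m$ carries no dynamical information and the final step fails.
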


\begin{proof}
By Lemma \ref{7271} we have that $h_A(\hat{f}_{0,\infty})=0$ implies that $h_A(f_{0,\infty})=0$.
It is to show that $h_A(f_{0,\infty})=0$ implies that $h_A(\hat{f}_{0,\infty})=0$.
Otherwise, $h_A(\hat{f}_{0,\infty})>0$.
Then there exists $\epsilon_0>0$ such that for any $0<\epsilon<\epsilon_0$,
\[\limsup_{n\to\infty}\frac{1}{n}\log s_{n}(\epsilon,A,\hat{f}_{0,\infty})>0.\]
Thus, there exist $b>0$ and an increasing sequence of positive integers $\{n_i\}_{i=1}^{\infty}$ such that
\[s_{n_i}(\epsilon,A,\hat{f}_{0,\infty})\geq e^{bn_i},\;i\geq1.\]
Let $\{g_n\}_{n=1}^{\infty}$ be a sequence in $\mathcal{C}(X,\mathds{R})$ satisfying that
$\left\|g_n\right\|\leq1$ for each $n\geq1$ and
\begin{equation*}
\mathcal{P}_{d}(\mu,\nu)=\sum_{n=1}^{\infty}\frac{|\int g_nd\mu-\int g_nd\nu|}{2^n},\;\;\mu,\nu\in\mathcal{M}(X).
\end{equation*}
For any $0<\epsilon<\epsilon_0$, there exists a positive integer $K$ such that
\begin{equation}\label{7311}
\sum_{n=K+1}^{+\infty}\frac{1}{2^{n-1}}<\frac{\epsilon}{2}.
\end{equation}
By the continuity of $\{g_n\}_{n=1}^{\infty}$, one can find $\delta>0$ such that
\begin{equation}\label{33}
d(x,y)<\delta\Rightarrow|g_n(x)-g_n(y)|<\frac{\epsilon}{9},\;\;1\leq n\leq K.
\end{equation}
Take an open cover $\alpha$ of $X$ with $d(\alpha)<\delta$.
We shall show that $h_{A}(f_{0,\infty},\alpha)>0$.

First, we shall construct a linear mapping $\phi: l_1^{k_{n_i}}\to l_{\infty}^{Kn_i}$ with $\left\|\phi\right\|\leq1$,
where $A=\{a_t\}_{t=1}^{\infty}$ and $k_{n_i}=\mathcal{N}\left(\bigvee_{t=1}^{n_i}f_{0}^{-a_t}\alpha\right)$ for each $i\geq1$.
Take a subcover $\beta=\{B_1,B_2,\cdots,B_{k_{n_i}}\}$ of $\bigvee_{t=1}^{n_i}f_{0}^{-a_t}\alpha$
with the minimal cardinality. Denote
\[C_1=B_1,\;C_2=B_2\setminus B_1,\;\cdots,C_{k_{n_i}}=B_{k_{n_i}}\setminus \left(\bigcup_{j=1}^{k_{n_i}-1}B_j\right).\]
Then $\bigcup_{j=1}^{k_{n_i}}C_j=X$ and each $C_j\neq\emptyset$.
Take $y_j\in C_j$ for each $1\leq j\leq k_{n_i}$.
Define $\phi: l_1^{k_{n_i}}\to l_{\infty}^{Kn_i}$ by
\[\phi(x_1,\cdots,x_{k_{n_i}})
=\left(\frac{1}{2^n}\sum_{j=1}^{k_{n_i}}g_n(f_{0}^{a_t}(y_j))x_j\right)_{1\leq n\leq K,\;1\leq t\leq n_i}.\]
It is easy to verify that $\phi$ is a linear mapping and $\left\|\phi\right\|\leq1$.

Second, we shall show that $\phi(B_1(l_1^{k_{n_i}}))$ contains more than $e^{bn_i}$ points that are $\epsilon/(9\cdot2^{K})$-separated.
Let $E$ be an $(n_i,\epsilon,A)$-separated set of $(\mathcal{M}(X),\hat{f}_{0,\infty})$ with $|E|=s_{n_i}(\epsilon,A,\hat{f}_{0,\infty})$.
Thus, for any $\mu\neq\nu\in E$, there exists $1\leq t_0\leq n_i$ such that
\[\mathcal{P}_{d}(\hat{f}_{0}^{a_{t_0}}(\mu),\hat{f}_{0}^{a_{t_0}}(\nu))
=\sum_{n=1}^{\infty}\frac{|\int g_nd(\hat{f}_{0}^{a_{t_0}}(\mu))-\int g_nd(\hat{f}_{0}^{a_{t_0}}(\nu))|}{2^n}>\epsilon,\]
which, together with (\ref{7311}), implies that
\begin{equation}\label{7301}
\sum_{n=1}^{K}\frac{|\int g_nd(\hat{f}_{0}^{a_{t_0}}(\mu))-\int g_nd(\hat{f}_{0}^{a_{t_0}}(\nu))|}{2^n}>\frac{\epsilon}{2}.
\end{equation}
Define a map $\psi: E\to B_1(l_1^{k_{n_i}})$ by
\[\psi(\mu)=(\mu(C_1),\mu(C_2),\cdots,\mu(C_{k_{n_i}})).\]
It is clear that $\phi(\psi(E))\subset\phi(B_1(l_1^{k_{n_i}}))$.
We claim that for any $\mu\neq\nu\in E$, $\phi(\psi(\mu))$ and $\phi(\psi(\nu))$ are $\epsilon/(9\cdot2^{K})$-separated.
Otherwise, for any $1\leq n\leq K$ and $1\leq t\leq n_i$,
\begin{equation*}
\frac{1}{2^n}\left|\sum_{j=1}^{k_{n_i}}g_n(f_{0}^{a_t}(y_j))\mu(C_j)
-\sum_{j=1}^{k_{n_i}}g_n(f_{0}^{a_t}(y_j))\nu(C_j)\right|
\leq\frac{\epsilon}{9\cdot2^K}.
\end{equation*}
This yields that
\begin{equation}\label{7312}
\left|\sum_{j=1}^{k_{n_i}}g_n(f_{0}^{a_t}(y_j))\mu(C_j)
-\sum_{j=1}^{k_{n_i}}g_n(f_{0}^{a_t}(y_j))\nu(C_j)\right|
\leq\frac{\epsilon}{9}.
\end{equation}
On the other hand, we have
\begin{align*}
&\left|\int g_n(f_{0}^{a_{t}}(x))d\mu(x)-\int g_n(f_{0}^{a_{t}}(x))d\nu(x)\right|
\leq\left|\int g_n(f_{0}^{a_{t}}(x))d\mu(x)-\sum_{j=1}^{k_{n_i}}g_n(f_{0}^{a_t}(y_j))\mu(C_j)\right|\\
&+\left|\sum_{j=1}^{k_{n_i}}g_n(f_{0}^{a_t}(y_j))\mu(C_j)
-\sum_{j=1}^{k_{n_i}}g_n(f_{0}^{a_t}(y_j))\nu(C_j)\right|
+\left|\sum_{j=1}^{k_{n_i}}g_n(f_{0}^{a_t}(y_j))\nu(C_j)
-\int g_n(f_{0}^{a_{t}}(x))d\nu(x)\right|.
\end{align*}
Since $\beta=\{B_1,B_2,\cdots,B_{k_{n_i}}\}$ is a subcover of $\bigvee_{t=1}^{n_i}f_{0}^{-a_t}\alpha$,
there exist $A_1^{j},\cdots,A_{n_i}^{j}\in\alpha$ such that $B_j=\bigcap_{t=1}^{n_i}f_{0}^{-a_t}(A_t^{j})$
for any $1\leq j\leq k_{n_i}$. It follows from the fact that $C_j\subset B_j$ that
\[x,y\in C_j\Rightarrow f_{0}^{a_t}(x),f_{0}^{a_t}(y)\in A_t^{j},\;1\leq t\leq n_i,\;1\leq j\leq k_{n_i}.\]
This, together with the fact that $d(\alpha)<\delta$ and (\ref{33}), implies that for any $1\leq j\leq k_{n_i}$,
\begin{equation*}
x,y\in C_j\Rightarrow|g_n(f_{0}^{a_t}(x))-g_n(f_{0}^{a_t}(y))|<\frac{\epsilon}{9},\;\;1\leq n\leq K,\;1\leq t\leq n_i.
\end{equation*}
Thus,
\begin{align*}
\left|\int g_n(f_{0}^{a_{t}}(x))d\mu(x)-\sum_{j=1}^{k_{n_i}}g_n(f_{0}^{a_t}(y_j))\mu(C_j)\right|
&=\left|\sum_{j=1}^{k_{n_i}}\int_{C_j} g_n(f_{0}^{a_{t}}(x))d\mu(x)
-\sum_{j=1}^{k_{n_i}}\int_{C_j} g_n(f_{0}^{a_{t}}(y_j))d\mu(x)\right|\\
&\leq\sum_{j=1}^{k_{n_i}}\int_{C_j}\left|g_n(f_{0}^{a_t}(x))-g_n(f_{0}^{a_t}(y_j))\right|d\mu(x)
\leq\frac{\epsilon}{9}.
\end{align*}
Similarly, we obtain that
\[\left|\int g_n(f_{0}^{a_{t}}(x))d\nu(x)-\sum_{j=1}^{k_{n_i}}g_n(f_{0}^{a_t}(y_j))\nu(C_j)\right|\leq\frac{\epsilon}{9}.\]
This, together with (\ref{7312}), implies that
\begin{align*}
\left|\int g_n(f_{0}^{a_{t}}(x))d\mu(x)-\int g_n(f_{0}^{a_{t}}(x))d\nu(x)\right|\leq\frac{\epsilon}{3},\;1\leq n\leq K,\;1\leq t\leq n_i.
\end{align*}
This contradicts to (\ref{7301}).
Note that $E$ contains more than $e^{bn_i}$ points and $\phi\circ\psi$ is injective on $E$.
Hence, $\phi(\psi(E))$ contains more than $e^{bn_i}$ points that are $\epsilon/(9\cdot2^{K})$-separated.

Finally, we show that $h_A(f_{0,\infty})>0$.
We have proved above that $\phi$ is a linear mapping from $l_1^{k_{n_i}}$ to $l_{\infty}^{Kn_i}$ with
$\left\|\phi\right\|\leq1$, and $\phi(B_1(l_1^{k_{n_i}}))$ contains more than $e^{bn_i}$ points that are
$\epsilon/(9\cdot2^{K})$-separated, then $m\geq2^{cn}$.
Thus, by Lemma \ref{321}, we have that there exists a constant $c>0$ such that
\[k_{n_i}\geq2^{cn_i},\; \;i\geq1.\]
Hence,
\[h_A(f_{0,\infty})\geq\limsup_{i\to\infty}\frac{1}{n_i}\log\mathcal{N}\left(\bigvee_{t=1}^{n_i}f_{0}^{-a_t}\alpha\right)
=\limsup_{i\to\infty}\frac{\log k_{n_i}}{n_i}\geq\limsup_{i\to\infty}\frac{\log2^{cn_i}}{n_i}=c\log2>0.\]
This is a contradiction.
Therefore, $h_A(f_{0,\infty})=0$ implies that $h_A(\hat{f}_{0,\infty})=0$.
\end{proof}

\subsection{Relations on positive t.s.e.}

\begin{lemma}\cite{Liu20}\label{31a}
Let $k\geq1$. The map $\pi: X^k\to\mathcal{M}(X)$ defined by
\begin{equation}\label{811}
\pi((x_1,\cdots,x_k))=\frac{1}{\sum_{i=1}^{k}2^i}\sum_{i=1}^{k}2^i\delta_{x_i}
\end{equation}
is injective.
\end{lemma}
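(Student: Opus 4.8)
The plan is to show that the probability measure $\mu=\pi((x_1,\dots,x_k))$ alone determines each coordinate $x_i$. Write $c=\sum_{i=1}^{k}2^i=2^{k+1}-2$, so that $c\mu=\sum_{i=1}^{k}2^i\delta_{x_i}$ is a positive integer combination of Dirac point masses. First I would observe that $\mu$ is purely atomic and that its set of atoms (the points $x$ with $\mu(\{x\})>0$) together with the corresponding masses $\mu(\{x\})$ are intrinsic to $\mu$, hence coincide for any two preimages of $\mu$ under $\pi$. For an atom $x$ of $\mu$, put $S_x=\{\,i:\,x_i=x\,\}\subset\{1,\dots,k\}$; then $c\,\mu(\{x\})=\sum_{i\in S_x}2^i$.

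The key step is the uniqueness of binary representation: since the weights $2^1,\dots,2^k$ are distinct powers of $2$, the nonnegative integer $c\,\mu(\{x\})$ determines the subset $S_x$ uniquely. Thus from $\mu$ alone one recovers, for each atom $x$, the set $S_x$ of indices mapped to $x$. If $\pi((x_1,\dots,x_k))=\pi((y_1,\dots,y_k))=\mu$, applying the same reasoning to $(y_1,\dots,y_k)$ yields subsets $S'_x$ with $S'_x=S_x$ for every atom $x$, since both are the unique subset read off from the binary digits of the integer $c\,\mu(\{x\})$.

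Finally, the family $\{S_x\}$, indexed by the atoms $x$ of $\mu$, is a partition of $\{1,\dots,k\}$: every index $i$ lies in exactly one $S_x$, namely the one with $x=x_i$ (and $x_i$ is an atom because $\mu(\{x_i\})\geq 2^i/c>0$), and distinct $S_x$ are disjoint because each $x_i$ is a single point. Hence for each $i$ there is a unique atom $x$ with $i\in S_x$, and this forces $x_i=x=y_i$. Therefore $(x_1,\dots,x_k)=(y_1,\dots,y_k)$, which is the desired injectivity.

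The only place where any care is needed is the possibility that the coordinates $x_i$ are not pairwise distinct, so that several Dirac masses collapse into one atom of larger mass; the choice of geometrically growing weights $2^i$ is exactly what guarantees that the collapsed mass still encodes, through its binary expansion, precisely which indices were identified. Everything else is routine once this subset-sum uniqueness is in place.
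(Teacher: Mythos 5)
Your proof is correct: identifying the atoms of $\mu$, reading off the index sets $S_x$ from the uniqueness of binary expansions of the integers $c\,\mu(\{x\})$, and noting that $\{S_x\}$ partitions $\{1,\dots,k\}$ is exactly the argument that the choice of weights $2^i$ is designed to support. The paper itself states this lemma with a citation to \cite{Liu20} and gives no proof, and your argument is essentially the standard one used there, so there is nothing further to reconcile.
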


\begin{theorem}\label{sequence entropy 1}
Let $A\in\mathcal{S}$. Then $h_A(f_{0,\infty})>0$ if and only if $h_A(\hat{f}_{0,\infty})=+\infty$.
Consequently, $h^{*}(f_{0,\infty})>0$ if and only if $h^{*}(\hat{f}_{0,\infty})=+\infty$.
\end{theorem}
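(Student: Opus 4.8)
The plan is to read off the forward implication from Theorem~\ref{sequence entropy} and to obtain the reverse one by embedding, for every $k\geq1$, a conjugate copy of the product system $(X^k,f_{0,\infty}^{(k)})$ into $(\mathcal{M}(X),\hat{f}_{0,\infty})$ and letting $k\to\infty$; the claim on $h^{*}$ then follows by taking suprema over $A\in\mathcal{S}$. For the ``if'' direction, if $h_A(\hat{f}_{0,\infty})=+\infty$ then $h_A(\hat{f}_{0,\infty})\neq0$, so by Theorem~\ref{sequence entropy} $h_A(f_{0,\infty})\neq0$, i.e.\ $h_A(f_{0,\infty})>0$.

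For the ``only if'' direction, I would assume $h_A(f_{0,\infty})>0$ and fix $k\geq1$. Take the map $\pi:X^k\to\mathcal{M}(X)$ of Lemma~\ref{31a}; it is continuous and injective, and since $X^k$ is compact it is a homeomorphism onto the compact set $Y_k:=\pi(X^k)\subset\mathcal{M}(X)$. Using $\hat{f}_n(\delta_x)=\delta_{f_n(x)}$ and the definition (\ref{811}) of $\pi$, one checks at once that $\hat{f}_n\circ\pi=\pi\circ f_n^{(k)}$ for all $n\geq0$; in particular $\hat{f}_n(Y_k)\subset Y_k$, so $(Y_k,\hat{f}_{0,\infty}|_{Y_k})$ is a sub-NDS of $(\mathcal{M}(X),\hat{f}_{0,\infty})$, and $\pi^{-1}:Y_k\to X^k$ is a continuous surjection with $\pi^{-1}\circ\hat{f}_n=f_n^{(k)}\circ\pi^{-1}$ on $Y_k$. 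As $X^k$ and $Y_k$ are compact metric spaces, the constant sequences $\{\pi\}$ and $\{\pi^{-1}\}$ are automatically equi-continuous, so $\pi$ furnishes a topological equi-conjugacy between $(X^k,f_{0,\infty}^{(k)})$ and $(Y_k,\hat{f}_{0,\infty}|_{Y_k})$. Lemma~\ref{22} then gives $h_A(\hat{f}_{0,\infty}|_{Y_k})=h_A(f_{0,\infty}^{(k)})$, and Lemma~\ref{21} gives $h_A(f_{0,\infty}^{(k)})=k\,h_A(f_{0,\infty})$. Since $h_A(\hat{f}_{0,\infty},Y_k)$ is, by the separated-set definition, exactly the t.s.e.\ of the subsystem $(Y_k,\hat{f}_{0,\infty}|_{Y_k})$, and t.s.e.\ is monotone in the underlying set,
\[h_A(\hat{f}_{0,\infty})\;\geq\;h_A(\hat{f}_{0,\infty},Y_k)\;=\;k\,h_A(f_{0,\infty}).\]
As $k\geq1$ is arbitrary and $h_A(f_{0,\infty})>0$, this forces $h_A(\hat{f}_{0,\infty})=+\infty$. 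Taking suprema over $A\in\mathcal{S}$: $h^{*}(f_{0,\infty})>0$ iff $h_A(f_{0,\infty})>0$ for some $A$, iff $h_A(\hat{f}_{0,\infty})=+\infty$ for some $A$, iff $h^{*}(\hat{f}_{0,\infty})=+\infty$.

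I do not expect a genuine obstacle, since every ingredient is already available; the two points that need care are the \emph{direction} of the (equi-)semiconjugacy — one must regard $(X^k,f_{0,\infty}^{(k)})$ as a factor of $(Y_k,\hat{f}_{0,\infty}|_{Y_k})$ through $\pi^{-1}$, so that Lemma~\ref{22} produces a \emph{lower} bound for $h_A(\hat{f}_{0,\infty}|_{Y_k})$, which is legitimate precisely because $\pi$ is a homeomorphism onto its image — and the (routine) identification of the t.s.e.\ of $\hat{f}_{0,\infty}$ on the invariant subset $Y_k$ with the intrinsic t.s.e.\ of the corresponding subsystem. The real content is simply the combination of this conjugate embedding with the product formula of Lemma~\ref{21}: positivity of $h_A(f_{0,\infty})$ propagates, amplified by the factor $k$, into the induced system, forcing $h_A(\hat{f}_{0,\infty})$ to be infinite.
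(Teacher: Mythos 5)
Your proposal is correct and follows essentially the same route as the paper: the "if" direction from Theorem \ref{sequence entropy}, and the "only if" direction by checking $\hat{f}_n\circ\pi=\pi\circ f_n^{(k)}$ for the map $\pi$ of Lemma \ref{31a}, viewing $(X^k,f_{0,\infty}^{(k)})$ as equi-conjugate to the subsystem on $\pi(X^k)$, and combining Lemma \ref{22} with Lemma \ref{21} to get $h_A(\hat{f}_{0,\infty})\geq k\,h_A(f_{0,\infty})$ for all $k$. The extra care you take (compactness giving a homeomorphism onto the image, the direction of the semiconjugacy, and deducing the $h^{*}$ statement via the zero-entropy equivalence) matches the paper's argument in substance.
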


\begin{proof}
It is easy to see that $h_A(\hat{f}_{0,\infty})=+\infty$ implies that $h_A(f_{0,\infty})>0$
by Theorem \ref{sequence entropy}. On the other hand, suppose that $h_A(f_{0,\infty})>0$.
Given $k\geq1$. It is easy to verify that
\begin{equation}\label{32}
\pi\circ f_{n}^{(k)}=\hat{f}_n\circ\pi,\;\;n\geq0,
\end{equation}
where $\pi$ is defined in (\ref{811}), and $\pi$ is continuous and injective by Lemma \ref{31a}.
This, together with (\ref{32}), implies that $(X^k,f_{0,\infty}^{(k)})$ is topologically $\pi$-equi-conjugate to
$(\Omega,\hat{f}_{0,\infty})$, where
\[\Omega=\{\frac{1}{\sum_{i=1}^{k}2^i}\sum_{i=1}^{k}2^i\delta_{x_i}: (x_1,\cdots,x_k)\in X^k\}.\]
Thus, by Lemmas \ref{22} and \ref{21}, we have that
\[h_A(\hat{f}_{0,\infty})\geq h_A(\hat{f}_{0,\infty},\Omega)=h_A(f_{0,\infty}^{(k)})=kh_A(f_{0,\infty}).\]
Hence, $h_A(f_{0,\infty})>0$ implies that $h_A(\hat{f}_{0,\infty})=+\infty$.

Suppose that $h^{*}(f_{0,\infty})>0$. Then there exists $A\in\mathcal{S}$ such that $h_A(f_{0,\infty})>0$,
which yields that $h_A(\hat{f}_{0,\infty})=+\infty$, and thus $h^{*}(\hat{f}_{0,\infty})=+\infty$.
On the other hand, suppose that $h^{*}(\hat{f}_{0,\infty})=+\infty$. It is to show that $h^{*}(f_{0,\infty})>0$.
Otherwise, $h^{*}(f_{0,\infty})=0$, then $h^{*}(\hat{f}_{0,\infty})=0$ by Theorem \ref{sequence entropy},
which is a contradiction. Thus $h^{*}(f_{0,\infty})>0$.
\end{proof}

\subsection{Some applications}

The next several results shows some interesting comparisons between entropies of NDSs and that of classical
dynamical systems.

\begin{theorem}\label{822}
{\rm(i)} Let $f$ be a continuous surjective self-map on the compact metric space $X$.
If $h(f)>0$, then $h_{A}(f)>0$, and thus $h_{A}(\hat f)=+\infty$ for any $A\in\mathcal{S}$.

{\rm(ii)} There exists a NDS $(X, f_{0,\infty})$ with $h(f_{0,\infty})>0$,
but $h_{A}(f_{0,\infty})=h_{A}(\hat f_{0,\infty})=0$ for some $A\in\mathcal{S}$,
where $X=[0,1]$, $id$ is the identity map on $X$, and
\[f_{0,\infty}=\{g, \frac{1}{2}id, g^2, \frac{1}{4}id, \cdots,  g^n, \frac{1}{2^n}id,\cdots\}.\]
\end{theorem}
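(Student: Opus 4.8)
The plan is to handle the two parts separately, since they are quite different in character; part~(ii), the construction, is where the real work lies.

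For part~(i) I would simply combine a known structural fact about autonomous systems with Theorem~\ref{sequence entropy 1}. Namely: if $f$ is continuous with $h(f)>0$, then $h_{A}(f)>0$ for \emph{every} $A\in\mathcal{S}$. (The mechanism is combinatorial independence: a positive entropy pair yields disjoint nonempty closed sets $A_{0},A_{1}$ together with an independence set of positive density, and this forces exponential growth of $\mathcal{N}\big(\bigvee_{i=1}^{n}f^{-a_{i}}\mathscr{B}\big)$ along any $A$ for the two-element cover $\mathscr{B}=\{X\setminus A_{0},\,X\setminus A_{1}\}$.) Applied to the constant sequence $f_{0,\infty}=\{f\}$ this gives $h_{A}(f)>0$, and Theorem~\ref{sequence entropy 1} then upgrades it to $h_{A}(\hat f)=+\infty$. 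Surjectivity of $f$ plays no essential role here beyond letting one pass to the core of $f$.

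For part~(ii) I would take $g$ to be the full tent map on $X=[0,1]$ (so $g(x)=2x$ on $[0,\tfrac{1}{2}]$ and $g(x)=2-2x$ on $[\tfrac{1}{2},1]$), which has $g(0)=0$, $h(g)=\log 2$, and the crucial property that $g^{m}$ maps $[0,2^{-m}]$ onto $[0,1]$ by the linear map $x\mapsto 2^{m}x$, so that $g^{m+1}$ restricted to $[0,2^{-m}]$ equals $x\mapsto g(2^{m}x)$. With $f_{0,\infty}$ as in the statement ($f_{2k}=g^{k+1}$, $f_{2k+1}=\tfrac{1}{2^{k+1}}\,id$), the first step is an induction on $k$ establishing
\[f_{0}^{2k-1}=g^{k},\qquad f_{0}^{2k}=\tfrac{1}{2^{k}}\,g^{k}\qquad(k\ge 1);\]
the point is that the contraction $\tfrac{1}{2^{k}}\,id$ is undone exactly by the first $k$ doublings sitting inside $g^{k+1}$. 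From $f_{0}^{2k-1}=g^{k}$ one deduces $h(f_{0,\infty})\ge\tfrac{1}{2}h(g)=\tfrac{1}{2}\log 2>0$: an $(n,\epsilon)$-separated set for $g$ is also a $(2n,\epsilon)$-separated set for $f_{0,\infty}$, since for $x\neq y$ the $g$-orbits already separate at some time $i_{0}\le n-1$, and then the $f_{0,\infty}$-orbits separate at time $0$ (if $i_{0}=0$) or at time $2i_{0}-1\le 2n-1$ (where $f_{0}^{2i_{0}-1}=g^{i_{0}}$).

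It remains to exhibit the sequence. Take $A=\{2i\}_{i=1}^{\infty}$, so $f_{0}^{a_{i}}=f_{0}^{2i}=\tfrac{1}{2^{i}}\,g^{i}$, whose images shrink to $\{0\}$. Fix any finite open cover $\mathscr{A}$ of $[0,1]$ and pick $A_{0}\in\mathscr{A}$ with $[0,\eta)\subset A_{0}$; then $2^{i}\eta\ge 1$ forces $f_{0}^{-2i}(A_{0})=(\tfrac{1}{2^{i}}g^{i})^{-1}(A_{0})=[0,1]$, hence $\mathcal{N}\big(f_{0}^{-2i}\mathscr{A}\big)=1$ for all large $i$, and therefore $\mathcal{N}\big(\bigvee_{i=1}^{n}f_{0}^{-2i}\mathscr{A}\big)$ is bounded by a constant depending only on $\mathscr{A}$. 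Thus $h_{A}(f_{0,\infty},\mathscr{A})=0$ for every $\mathscr{A}$, so $h_{A}(f_{0,\infty})=0$, and Theorem~\ref{sequence entropy} gives $h_{A}(\hat f_{0,\infty})=0$. The main obstacle I anticipate is the bookkeeping in part~(ii): verifying the composition identities $f_{0}^{2k-1}=g^{k}$ cleanly — this is exactly the place where the specific choice of $g$ is used — and checking that the even-indexed maps $f_{0}^{2k}$ really do contribute nothing to the relevant open-cover joins; part~(i) is otherwise a one-line deduction from the quoted fact.
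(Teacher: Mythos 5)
Your proposal is correct and follows essentially the same route as the paper: part (i) is exactly the paper's argument (the known autonomous fact that $h(f)>0$ forces $h_{A}(f)>0$ for every $A$, cited there from Huang--Shao--Ye, combined with Theorem \ref{sequence entropy 1}), and part (ii) rests on the same composition identity $f_{0}^{2n-1}=g^{n}$, the same bound $h(f_{0,\infty})\geq\frac{1}{2}\log 2$, the same sequence $A=\{2n\}_{n=1}^{\infty}$ with images shrinking to $\{0\}$, and Theorem \ref{sequence entropy}. The only differences are cosmetic --- you make the tent-map choice of $g$ explicit and compute the zero sequence entropy via open covers instead of separated sets --- though your parenthetical ``positive-density independence set'' heuristic in (i) is not by itself a proof along an arbitrary sparse $A$ (the sequence may miss the independence set), so one should simply invoke the cited theorem as the paper does.
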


\begin{proof}
{\rm(i)} By Theorem 5.4 in \cite{Huang05}, $h(f)>0$ implies that $h_{A}(f)>0$ for any $A\in\mathcal{S}$,
and thus $h_{A}(\hat f)=+\infty$ by Theorem \ref{sequence entropy 1}.

{\rm(ii)} It is easy to verify that $f_{0}^{2n-1}=g^n$ for each $n\geq1$. Then
\[s_{2n}(\epsilon,f_{0,\infty})\geq s_{n+1}(\epsilon,g),\;n\geq1.\]
Thus,
\[h(f_{0,\infty})\geq\frac{1}{2}h(g)=\frac{1}{2}\log2>0.\]
Let $A=\{2n\}_{n=1}^{\infty}$. Since
\[f_{0}^{2n}(x)\leq\frac{1}{2^n},\;n\geq1,\;x\in X,\]
we have that
\[h_{A}(f_{0,\infty})=\limsup_{n\to\infty}\frac{1}{n}\log s_{n}(\epsilon,A,f_{0,\infty})=0.\]
Hence, $h_{A}(\hat f_{0,\infty})=0$ by Theorem \ref{sequence entropy}.
\end{proof}

\begin{lemma}\cite{Kolyada96}\label{7273}
Let $f_{0,\infty}$ be a sequence of continuous self-maps on a compact metric space $X$ converging uniformly to $f$. Then
$h(f_{0,\infty})\leq h(f)$.
\end{lemma}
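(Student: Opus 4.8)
The plan is to use the open-cover definition of $h(f_{0,\infty})=h_{\mathds{Z}_{+}}(f_{0,\infty})$: fix an arbitrary finite open cover $\mathscr{A}$ of $X$ and show $h(f_{0,\infty},\mathscr{A})\le h(f)$; taking the supremum over $\mathscr{A}$ then finishes. Write $b_n(\mathscr{A})=\log\mathcal{N}\big(\bigvee_{j=0}^{n-1}(f_0^{j})^{-1}\mathscr{A}\big)$, and for $k\ge0$, $m\ge1$ let $c_m^{(k)}(\mathscr{A})=\log\mathcal{N}\big(\bigvee_{j=0}^{m-1}(f_k^{j})^{-1}\mathscr{A}\big)$, the corresponding count for the shifted system $(X,f_{k,\infty})$. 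Using $f_0^{n+l}=f_n^{l}\circ f_0^{n}$ together with the facts that taking preimages does not increase $\mathcal{N}$ and that $\mathcal{N}(\mathscr{P}\vee\mathscr{Q})\le\mathcal{N}(\mathscr{P})\mathcal{N}(\mathscr{Q})$, one gets the block-subadditivity $b_{n+m}(\mathscr{A})\le b_n(\mathscr{A})+c_m^{(n)}(\mathscr{A})$, hence $b_{qm}(\mathscr{A})\le\sum_{l=0}^{q-1}c_m^{(lm)}(\mathscr{A})$ for all $q\ge1$. Since $c_m^{(k)}(\mathscr{A})\le m\log|\mathscr{A}|$ is bounded in $k$ for fixed $m$, a Cesàro estimate, plus the trivial bound $b_{qm+r}(\mathscr{A})\le b_{qm}(\mathscr{A})+r\log|\mathscr{A}|$ to handle $n$ not divisible by $m$, yields
\[h(f_{0,\infty},\mathscr{A})=\limsup_{n\to\infty}\tfrac1n b_n(\mathscr{A})\le\tfrac1m\limsup_{k\to\infty}c_m^{(k)}(\mathscr{A}),\qquad m\ge1.\]

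The heart of the argument is to bound $\limsup_{k\to\infty}c_m^{(k)}(\mathscr{A})$ in terms of $f$ alone. Fix $m$, let $\lambda>0$ be a Lebesgue number of $\mathscr{A}$, and let $\mathscr{B}$ be a finite cover of $X$ by open balls of radius $\lambda/8$ (depending only on $X$ and $\lambda$). Since $X$ is compact, $f$ is uniformly continuous, so an induction on $j$ gives $f_k^{j}=f_{k+j-1}\circ\cdots\circ f_k\to f^{j}$ uniformly as $k\to\infty$, for each fixed $j<m$. Choose $k$ so large that $\sup_{x\in X}d(f_k^{j}(x),f^{j}(x))<\lambda/8$ for all $j<m$. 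Then $\bigvee_{j=0}^{m-1}(f^{j})^{-1}\mathscr{B}$ refines $\bigvee_{j=0}^{m-1}(f_k^{j})^{-1}\mathscr{A}$: on any atom $\bigcap_{j}(f^{j})^{-1}B(x_{i_j},\lambda/8)$ of the former, every point $z$ satisfies $d(f_k^{j}(z),x_{i_j})<\lambda/4$, so for each $j$ the images $f_k^{j}(z)$ of all points of this atom lie in one ball of radius $\lambda/4$, hence in a set of diameter $<\lambda$, hence in a single member of $\mathscr{A}$; this exhibits the atom inside an element of $\bigvee_{j=0}^{m-1}(f_k^{j})^{-1}\mathscr{A}$. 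Consequently $c_m^{(k)}(\mathscr{A})\le\log\mathcal{N}\big(\bigvee_{j=0}^{m-1}(f^{j})^{-1}\mathscr{B}\big)$ for all large $k$, so $\limsup_{k\to\infty}c_m^{(k)}(\mathscr{A})\le\log\mathcal{N}\big(\bigvee_{j=0}^{m-1}(f^{j})^{-1}\mathscr{B}\big)$.

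Combining the two displays, $h(f_{0,\infty},\mathscr{A})\le\tfrac1m\log\mathcal{N}\big(\bigvee_{j=0}^{m-1}(f^{j})^{-1}\mathscr{B}\big)$ for every $m\ge1$. Letting $m\to\infty$ and using that, by subadditivity for the autonomous system $(X,f)$, the right-hand side converges to $h(f,\mathscr{B})$, we get $h(f_{0,\infty},\mathscr{A})\le h(f,\mathscr{B})\le h(f)$. Since $\mathscr{A}$ was arbitrary, $h(f_{0,\infty})=\sup_{\mathscr{A}}h(f_{0,\infty},\mathscr{A})\le h(f)$.

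The main obstacle I anticipate is the \emph{amplification} issue: uniform convergence $f_n\to f$ does not force $f_0^{n}$ and $f^{n}$ to remain uniformly close, because the modulus of continuity of $f^{n}$ can blow up with $n$. The block decomposition above is engineered precisely to avoid this — inside a block of fixed length $m$ only finitely many iterates appear, so the convergence $f_k^{j}\to f^{j}$ is harmless, and the order of limits (first $q\to\infty$, only afterwards $m\to\infty$) never allows the error to compound. The second delicate point is the refinement claim in the middle paragraph: one must verify that a \emph{single} member of $\mathscr{A}$ contains the full $f_k^{j}$-image of an entire atom, which is exactly where the Lebesgue number of $\mathscr{A}$ is used.
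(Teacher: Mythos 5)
Your proof is correct; note, however, that the paper itself offers no argument for this lemma --- it is quoted directly from Kolyada and Snoha \cite{Kolyada96} --- so there is no in-paper proof to match, and your write-up serves as a self-contained substitute for that citation. Both pillars of your argument check out. The block estimate $b_{n+m}(\mathscr{A})\le b_n(\mathscr{A})+c_m^{(n)}(\mathscr{A})$ follows from $f_0^{n+l}=f_n^{l}\circ f_0^{n}$, the bound $\mathcal{N}(\mathscr{P}\vee\mathscr{Q})\le\mathcal{N}(\mathscr{P})\mathcal{N}(\mathscr{Q})$, and the fact that preimages do not increase $\mathcal{N}$ (no surjectivity of the $f_i$ is needed), while $c_m^{(k)}(\mathscr{A})\le m\log|\mathscr{A}|$ makes the Ces\`aro step legitimate, giving $h(f_{0,\infty},\mathscr{A})\le\frac1m\limsup_{k\to\infty}c_m^{(k)}(\mathscr{A})$ for every $m$. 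The refinement step, which is where the real content lies, is also sound: the induction showing $f_k^{j}\to f^{j}$ uniformly for each fixed $j$ uses only the uniform continuity of $f$ and $k+j\to\infty$; the ball $B(x_{i_j},\lambda/4)$ has diameter at most $\lambda/2<\lambda$, so the Lebesgue-number property of $\mathscr{A}$ applies; and, crucially, the auxiliary cover $\mathscr{B}$ depends only on $\lambda$ and not on $m$ or $k$, so one may let $m\to\infty$ against the fixed $\mathscr{B}$ and invoke Fekete subadditivity for the autonomous system to get $h(f_{0,\infty},\mathscr{A})\le h(f,\mathscr{B})\le h(f)$. You also correctly identify the one genuine danger in this statement --- that uniform closeness of $f_n$ to $f$ does not control $f_0^{n}$ versus $f^{n}$ --- and the fixed-block comparison with iterates of the limit map, in the same spirit as the original argument in \cite{Kolyada96}, is exactly what neutralizes it.
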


Recall that $(X,f_{0,\infty})$ is topologically transitive if $\{n\geq 1:f_{0}^{n}(U)\cap V\neq\emptyset\}\neq\emptyset$
for any two nonempty open subsets $U,V\subset X$.

\begin{theorem}\label{8242}
\begin{itemize}
\item[{\rm(i)}] If $(I,f)$ is topologically transitive, then $h_{A}(\hat f)=+\infty$ for any $A\in\mathcal{S}$.

\item[{\rm(ii)}] There exists a topologically transitive NDS $(I,f_{0,\infty})$ satisfying that
$h_{A}(\hat f_{0,\infty})=0$ for some $A\in\mathcal{S}$.
\end{itemize}
\end{theorem}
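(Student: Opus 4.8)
The plan for~(i) is to reduce it to Theorem~\ref{822}(i) using two classical facts of one‑dimensional dynamics. First, a topologically transitive continuous self‑map $f$ of a compact interval $I$ is surjective: if $x$ has a dense orbit, then $\{f^{n}(x):n\ge1\}$ is still dense in $I$ and is contained in the closed set $f(I)$, forcing $f(I)=I$. Second, every topologically transitive interval map has positive topological entropy (see, e.g., \cite{Alse93}), so $h(f)>0$. Thus $f$ is a continuous surjective self‑map of a compact metric space with $h(f)>0$, and Theorem~\ref{822}(i) yields $h_A(f)>0$, hence $h_A(\hat f)=+\infty$, for every $A\in\mathcal{S}$; this is precisely~(i).

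For~(ii) I would construct, in the spirit of the example in Theorem~\ref{822}(ii), a transitive NDS on $I=[0,1]$ whose transitivity comes entirely from a collapsing mechanism, so that it carries no complexity at all. Fix a sequence $(p_n)_{n\ge1}$ in $I$ with the property that $\{n:p_n\in W\}$ is infinite for every nonempty open $W\subset I$, together with a sequence $\epsilon_n\downarrow0$. For each $n\ge1$ choose a continuous $h_n:I\to I$ with $h_n(I)\subset B(p_n,\epsilon_n)$ (for instance a constant map, or an affine map with tiny image), and define $f_{0,\infty}$ by $f_{n-1}=h_n$. Then $f_0^{n}=h_n\circ h_{n-1}\circ\cdots\circ h_1$, so $f_0^{n}(I)\subset h_n(I)\subset B(p_n,\epsilon_n)$, and in particular $\operatorname{diam}f_0^{n}(I)\le2\epsilon_n\to0$.

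Transitivity is then immediate: given nonempty open sets $U,V\subset I$, the choice of $(p_n)$ and $\epsilon_n\to0$ produces an $n$ with $B(p_n,\epsilon_n)\subset V$, and then $\emptyset\neq f_0^{n}(U)\subset B(p_n,\epsilon_n)\subset V$, so $n$ belongs to $\{m\ge1:f_0^{m}(U)\cap V\neq\emptyset\}$. For the sequence entropy, fix $A=(a_j)_{j\ge1}\in\mathcal{S}$ and $\epsilon>0$, and pick $n_0$ with $2\epsilon_n<\epsilon$ for all $n\ge n_0$. If $E\subset I$ is $(m,\epsilon,A)$‑separated, then for any two distinct points of $E$ an index $j\le m$ realizing the separation must satisfy $a_j<n_0$, since otherwise $\operatorname{diam}f_0^{a_j}(I)\le2\epsilon_{a_j}<\epsilon$; covering $I$ by finitely many intervals of length $\le\epsilon$ and passing to the product grid over the finitely many coordinates with $a_j<n_0$ shows that $|E|$ is bounded by a constant $C(\epsilon)$ independent of $m$. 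Hence $\limsup_m\frac1m\log s_m(\epsilon,A,f_{0,\infty})=0$ for every $\epsilon>0$, so $h_A(f_{0,\infty})=0$; as $A$ was arbitrary, $(I,f_{0,\infty})$ is in fact null, and $h_A(\hat f_{0,\infty})=0$ for every $A$ by Theorem~\ref{sequence entropy}, which in particular gives~(ii).

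The only genuinely nontrivial input is the classical fact used in~(i) that a transitive interval map has positive topological entropy; granting that (and the easy surjectivity remark), (i) is a one‑line consequence of Theorem~\ref{822}(i). In~(ii) the single point requiring care is that the bound on $(m,\epsilon,A)$‑separated sets be uniform in~$m$ — it is exactly this uniformity that forces the $\limsup$, and therefore $h_A(f_{0,\infty})$, to vanish — and everything else is routine bookkeeping, so I anticipate no real obstacle.
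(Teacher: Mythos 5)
Your argument is correct, and for part (ii) it takes a genuinely different route from the paper. For (i) the two proofs coincide in substance: the paper quotes Corollary 3.6 of Block--Coven to get $h(f)\geq\frac{1}{2}\log 2$ for a transitive interval map and then applies Theorem 5.4 of Huang--Shao--Ye together with Theorem \ref{sequence entropy 1}, which is exactly the content of Theorem \ref{822}(i) that you invoke (your surjectivity remark is needed for that reduction and is fine). For (ii), however, the paper constructs a much less degenerate example: connect-the-dots maps $F_m$ arranged in blocks so that $f_{0}^{s_n}(J)=I$ for every dyadic interval $J$ of level $n$ (a strong, exactness-like transitivity), with $f_n\to \mathrm{id}$ uniformly, so that $h(f_{0,\infty})\leq h(\mathrm{id})=0$ by Lemma \ref{7273} and then $h_{A}(\hat f_{0,\infty})=0$ for $A=\mathds{Z}_{+}$ by Theorem \ref{sequence entropy}. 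You instead let the compositions collapse, $\mathrm{diam}\, f_{0}^{n}(I)\to 0$ with the shrinking images accumulating on every open set; transitivity is then immediate, and your uniform bound on $(m,\epsilon,A)$-separated sets (separation can only occur at the finitely many indices $j$ with $a_j<n_0$, so $|E|\leq C(\epsilon)$ independently of $m$) gives $h_A(f_{0,\infty})=0$ for \emph{every} $A$, i.e. the system is null, which is stronger than what the theorem asks and than what the paper's choice $A=\mathds{Z}_+$ records. What each approach buys: yours is shorter, self-contained (no appeal to the Kolyada--Snoha convergence lemma), and yields nullness; the paper's example shows the phenomenon persists even for a system whose maps tend to the identity and whose finite compositions are surjective, so the transitivity there is not produced by collapsing the space onto single points. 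Both establish the stated existence claim.
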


\begin{proof}
(i) By Corollary 3.6 in \cite{BC87} we have that $h(f)\geq {1\over 2}\log 2>0$.
This, together with Theorem 5.4 in \cite{Huang05}, implies that $h_{A}(f)>0$, and thus
$h_{A}(\hat f)=+\infty$ for any $A\in\mathcal{S}$ by Theorem \ref{sequence entropy 1}.

(ii) Since $h_{A}(f_{0,\infty})$ is the standard topological entropy of $(X,f_{0,\infty})$ when $A=\mathds{Z}_{+}$,
(ii) is derived from Theorem 5.4 in \cite{Shao22}, we give its detailed construction for completeness.

Let $I=[0,1]$. First, we construct a family of functions  $F_m:I\to I$ for $m>0$. Divide $I$ into $m$ intervals
\[J_i\triangleq[a_i,a_{i+1}],\;0\leq i\leq m-1,\]
where $a_i={i\over m}$. For any $0\leq i\leq m-1$, put $c_i,d_i\in J_i$ with
\[c_i=a_i+{1\over3m},\; d_i=a_i+{2\over3m},\; d_{-1}=0, \;c_m=1.\]
The map $F_m$ is the connect-the-dots map
such that
\[F_m(a_i)=a_i,\; F_m(c_i)=c_{i+1},\;F_m(d_i)=d_{i-1},\; 0\leq i\leq m-1.\]
Then $(I,F_m)$ is exact for any fixed $m\geq 1$.
Next, inductively define the maps $\{f_n\}_{n=0}^{\infty}$. Let \[\mathcal{A}_{n}\triangleq\{[{{i}\over{2^{n}}},{{i+1}\over{2^{n}}}]:i=0,\cdots,2^{n}-1\}.\]
Then there exists $s_1\geq 1$ such that $\underbrace{F_{1}\circ\cdots\circ F_{1}}_{s_1}(J)=I$ for  $J\in\mathcal{A}_{1}$. Denote \[f_i\triangleq F_1,\; 0\leq i\leq s_1-1.\]
Assume that we have already defined $s_1<s_2<\cdots<s_n$ and $\{f_j\}_{j=0}^{s_n-1}$ such that $f_{0}^{s_k}(J)=I$ for  $J\in\mathcal{A}_{k}$ and $1\leq k\leq n$.
Let us inductively define $s_{n+1}$ and  $\{f_j\}_{j=s_n}^{s_{n+1}-1}$.
For any $J\in\mathcal{A}_{n+1}$, there exists
$l\geq 1$ such that $\underbrace{F_{n+1}\circ\cdots\circ F_{n+1}}_{l}(f_{0}^{s_n}(J))=I$. Denote
\[s_{n+1}\triangleq s_n+l,\;f_j\triangleq F_{n+1},\;s_n\leq j\leq s_{n+1}-1.\]
By the above construction, we get that

(1) $f_n$ converges uniformly to ${id}$ on $I$, which means that $h(f_{0,\infty})\leq h(id)=0$ by Lemma \ref{7273},
and thus $h(\hat f_{0,\infty})=0$ by Theorem \ref{sequence entropy};

(2) $f_{0}^{s_{n}}(J)=I$ for each $n\geq 1$ and $J\in\mathcal{A}_{n}$, which yields that
$(I, f_{0,\infty})$ is topologically transitive.
\end{proof}

\begin{proposition}\label{8243}
{\rm(i)} Let $f$ be a continuous self-map on a compact metric space $(X,d)$.
Assume that for any $x\in X$, there exists a fixed point $y$ of $f$ such that $\lim_{n\to\infty}f^{n}(x)=y$.
Then $h(f)=h(\hat f)=0$.

{\rm(ii)} There exists a NDS $([0,1], f_{0,\infty})$
such that $0, 1$ are fixed points and all others are asymptotic to $0$, but $h(f_{0,\infty})\geq\log 2$
and $h(\hat f_{0,\infty})=+\infty$.
\end{proposition}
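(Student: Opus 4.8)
For part (i), the plan is to push everything through the variational principle $h(f)=\sup_{\mu}h_{\mu}(f)$ for topological entropy of autonomous systems, the supremum ranging over all $f$-invariant Borel probability measures $\mu$ on $X$ (which exist since $X$ is compact and $f$ continuous). Fix such a $\mu$. As $X$ is compact metric, hence second countable, the Poincar\'e recurrence theorem gives that $\mu$-a.e.\ $x$ is recurrent, i.e.\ $x\in\omega(x)$; but by hypothesis $\lim_{n\to\infty}f^{n}(x)$ exists and equals a fixed point $y_{x}$, so $\omega(x)=\{y_{x}\}$, and $x\in\omega(x)$ then forces $x=y_{x}$, i.e.\ $f(x)=x$, for $\mu$-a.e.\ $x$. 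Thus $\mu(\mathrm{Fix}(f))=1$ and $f$ agrees with the identity $\mu$-a.e., so $h_{\mu}(f)=0$; as $\mu$ was arbitrary, $h(f)=0$. Applying Theorem \ref{sequence entropy} to the constant sequence $f_{0,\infty}\equiv f$ with $A=\mathds{Z}_{+}$ (so that $h_{A}=h$) then yields $h(\hat f)=0$. I would avoid the alternative route through $h(f)=h(f|_{\Omega(f)})$: identifying $\Omega(f)$ with $\mathrm{Fix}(f)$ is delicate (chain-type arguments can link distinct fixed points), whereas the measure-theoretic step is clean.

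For part (ii), I would construct an explicit example on $[0,1]$, the time-dependence being essential since by part (i) no autonomous map qualifies. The mechanism is a \emph{leaking, time-dependent horseshoe carried by open sets}. For each $n$ I would take a continuous $f_{n}\colon[0,1]\to[0,1]$ with $f_{n}(0)=0$, $f_{n}(1)=1$, governed by a sequence of \emph{open} ``active regions'' $A_{n}\subseteq[0,1]$ chosen so that $\bigcap_{n}A_{n}=\emptyset$ while each $A_{n}$ still contains points at a fixed mutual distance $\ge\epsilon_{0}$; on $A_{n}$, $f_{n}$ has two full monotone branches each mapping onto $A_{n+1}$, while off $A_{n}$ the map drains points monotonically toward $0$, and near $0$ each $f_{n}$ is a contraction fixing $0$. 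Because the $A_{n}$ are open with empty intersection, the set of points whose orbit stays in the active regions forever is empty --- here the nonautonomicity is crucial, since for a time-independent \emph{closed} horseshoe a compactness/K\"{o}nig argument would make that set nonempty --- so every $x\in(0,1)$ eventually exits the active regions and is then carried to $0$: $0,1$ are fixed and all other points are asymptotic to $0$. At the same time, organizing the branching into increasingly long time-blocks whose lengths dominate their start times forces $\limsup_{n}\tfrac1n\log s_{n}(\epsilon_{0},f_{0,\infty})\ge\log2$ (a nonautonomous horseshoe count in the spirit of Theorem \ref{8241}), so $h(f_{0,\infty})\ge\log2$.

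Once $h(f_{0,\infty})\ge\log2>0$, Theorem \ref{sequence entropy 1} with $A=\mathds{Z}_{+}$ immediately gives $h(\hat f_{0,\infty})=+\infty$, finishing (ii). The hard part is precisely the tension inside the construction: the two-branch structure must survive at a \emph{fixed} scale for arbitrarily long finite times and infinitely often, yet every single orbit must still converge to $0$ --- exactly the combination forbidden in the autonomous case by part (i). Arranging the open active regions so that they simultaneously have empty intersection (emptying the ``survivor set'') and keep the branching alive long enough, often enough, is the delicate point; the leftover items --- continuity of the $f_{n}$, the branch inclusions $f_{n}(\text{half of }A_{n})\supseteq A_{n+1}$, and the monotone drift toward $0$ off $A_{n}$ --- are routine once the block-and-escape layout is pinned down.
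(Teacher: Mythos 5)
Your part (i) is correct, and it takes a genuinely different route from the paper: the paper simply quotes Proposition 2 of \cite{Lamparta} to get $h(f)=0$ and then applies Theorem \ref{sequence entropy} with $A=\mathds{Z}_{+}$, whereas you derive $h(f)=0$ from the variational principle plus Poincar\'e recurrence (a.e.\ point is recurrent, recurrence forces $x=\lim_n f^n(x)\in\mathrm{Fix}(f)$, so every invariant measure sits on $\mathrm{Fix}(f)$, where $f^{-i}B$ agrees with $B$ modulo $\mu$ and hence $h_{\mu}(f)=0$). That argument is sound and has the advantage of being self-contained; the final step to $h(\hat f)=0$ is the same as the paper's.

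Part (ii), however, has a genuine gap. The paper does not construct anything here: it invokes Theorem 4 of \cite{Bali12}, which already supplies a NDS on $[0,1]$ with $0,1$ fixed, all other points asymptotic to $0$, and $h(f_{0,\infty})\geq\log 2$, and then applies Theorem \ref{sequence entropy 1}. You instead sketch a construction, and its crucial step is mis-justified: you claim that since the active regions $A_n$ are open with $\bigcap_n A_n=\emptyset$, ``the set of points whose orbit stays in the active regions forever is empty.'' That is a non sequitur --- the survivor set is $\{x: f_0^n(x)\in A_n \text{ for all } n\}$, not $\bigcap_n A_n$, and with two full monotone branches of $f_n$ mapping onto $A_{n+1}$ the cylinder sets $\{x: f_0^j(x)\in A_j,\ j\leq n\}$ are nonempty nested open sets whose intersection need not be empty just because the branches are carried by open sets; emptiness has to be engineered and proved from the specific geometry. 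Moreover, ``all others asymptotic to $0$'' also requires ruling out orbits converging to the fixed endpoint $1$: continuity together with $f_n(1)=1$ keeps points near $1$ high for long stretches, so you must make $1$ repelling (or otherwise force escape), which your sketch never addresses, and you must also ensure drained points cannot re-enter later active regions. These are precisely the delicate points you defer as ``routine once the block-and-escape layout is pinned down,'' but pinning down that layout is the proof. The entropy count via blocks whose lengths dominate their starting times (in the spirit of Theorem \ref{8241}) and the final use of Theorem \ref{sequence entropy 1} are fine; to close the argument you should either carry out the construction in full or, as the paper does, cite the Balibrea--Oprocha example.
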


\begin{proof}
{\rm(i)} By Proposition 2 in \cite{Lamparta}, we have that $h(f)=0$, and thus $h(\hat f)=0$ by Theorem \ref{sequence entropy}.

{\rm(ii)} It follows from Theorem 4 in \cite{Bali12} that $h(f_{0,\infty})\geq\log 2$.
Thus, $h(\hat f_{0,\infty})=+\infty$ by Theorem \ref{sequence entropy 1}.
\end{proof}

\begin{remark}
Theorem 4 in \cite{Bali12} mainly shows that positive topological entropy does not necessarily imply Li-Yorke chaos
for NDSs, which is also of big difference from that of classical dynamical systems.
\end{remark}

\section{Multi-sensitivity implies positive or infinite t.s.e.}

In this section, we shall study whether multi-sensitivity of $(X,f_{0,\infty})$ imply positive
or infinite t.s.e. Recall that $(X,f_{0,\infty})$ is called multi-sensitive
if there exists $\delta>0$ such that for any $k\geq 1$ and any nonempty open subsets
$V_1,\cdots,V_k\subset X$, there exists $n\geq 1$ satisfying that $d(f_{0}^{n}(V_i))>\delta$
for each $1\leq i\leq k$.

\begin{theorem}\label{8181}
Assume that $f_0$ is surjective. If $(X,f_{0,\infty})$ is multi-sensitive,
then there exists $A\in\mathcal{S}$ such that $h^{*}(f_{0,\infty})\geq h_{A}(f_{0,\infty})\geq\log 2$,
and thus $h_A(\hat{f}_{0,\infty})=h^{*}(\hat{f}_{0,\infty})=+\infty$.
\end{theorem}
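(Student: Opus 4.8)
The plan is to reduce the whole statement to one inequality: the existence of $A\in\mathcal{S}$ with $h_A(f_{0,\infty})\ge\log 2$. Granting that, $h^{*}(f_{0,\infty})\ge h_A(f_{0,\infty})\ge\log 2>0$ by the definition of the supremum t.s.e., and Theorem~\ref{sequence entropy 1} then forces $h_A(\hat f_{0,\infty})=+\infty$ and $h^{*}(\hat f_{0,\infty})=+\infty$, which is the last assertion. To get the lower bound I would work with separated sets: it is enough to produce a single $\eta>0$ and a single increasing sequence $A=\{a_m\}_{m=1}^{\infty}$ such that for every $m\ge1$ there is an $(m,\eta,A)$-separated subset of $X$ of cardinality $2^{m}$. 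Then $s_m(\eta,A,f_{0,\infty},X)\ge 2^{m}$, so $\limsup_{m\to\infty}\frac{1}{m}\log s_m(\eta,A,f_{0,\infty},X)\ge\log 2$, and since $s_m(\epsilon,A,f_{0,\infty},X)\ge s_m(\eta,A,f_{0,\infty},X)$ for $\epsilon\le\eta$, letting $\epsilon\to0$ gives $h_A(f_{0,\infty})\ge\log 2$.

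Fix a multi-sensitivity constant $\delta>0$. The preparatory step is to show that ``simultaneous large-image times'' are unbounded: for any nonempty open sets $V_1,\dots,V_k\subset X$ the set $T(V_1,\dots,V_k):=\{n\ge1:d(f_0^{n}(V_i))>\delta\ \text{for all }1\le i\le k\}$ is infinite. It is nonempty by multi-sensitivity; if it were finite with maximum $M$, then at time $M$ each $V_i$ would contain points $p_i\ne q_i$ with $d(f_0^{M}(p_i),f_0^{M}(q_i))>\delta$, and replacing $V_i$ by the nonempty open set $V_i':=V_i\cap f_0^{-M}\big(B(f_0^{M}(p_i),\delta/3)\big)$ would give $d(f_0^{M}(V_i'))\le 2\delta/3<\delta$, so $M\notin T(V_1',\dots,V_k')$, while $T(V_1',\dots,V_k')\subseteq T(V_1,\dots,V_k)$ is still nonempty; hence $\max T(V_1',\dots,V_k')\le M-1$. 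Iterating this $M$ times would reach nonempty open sets whose simultaneous large-image set is nonempty yet bounded above by $0$, which is absurd. I expect the surjectivity of $f_0$ to be used only in a minor way here (to keep the intersected sets nondegenerate, and for coherence with the locally connected refinement in Theorem~\ref{8182}).

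Using this, I would build by induction on $m$ an increasing sequence $a_1<a_2<\cdots$ together with nonempty open sets $\{V_\omega:\omega\in\{0,1\}^{m}\}$, $m\ge0$, with $V_{\emptyset}=X$ and $V_{\omega b}\subset V_\omega$: at stage $m$, apply the lemma to the $2^{m}$ sets $\{V_\omega:\omega\in\{0,1\}^{m}\}$ to pick $a_{m+1}>a_m$ with $d(f_0^{a_{m+1}}(V_\omega))>\delta$ for every $\omega$, choose $p_\omega\ne q_\omega\in V_\omega$ realizing this, and set $V_{\omega0}=V_\omega\cap f_0^{-a_{m+1}}\big(B(f_0^{a_{m+1}}(p_\omega),\delta/3)\big)$, $V_{\omega1}=V_\omega\cap f_0^{-a_{m+1}}\big(B(f_0^{a_{m+1}}(q_\omega),\delta/3)\big)$, noting that these two balls are more than $\delta/3$ apart. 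Picking any $x_\omega\in V_\omega$ for $\omega\in\{0,1\}^{m}$, if $\omega\ne\omega'$ have common prefix of length $j-1$ and differ at coordinate $j\le m$, then $f_0^{a_j}(x_\omega)$ and $f_0^{a_j}(x_{\omega'})$ lie in the two balls chosen at that node (which depend only on the common prefix), so $d(f_0^{a_j}(x_\omega),f_0^{a_j}(x_{\omega'}))>\delta/3$. Hence $\{x_\omega:\omega\in\{0,1\}^{m}\}$ is an $(m,\delta/3,A)$-separated set of size $2^{m}$, and $\eta=\delta/3$ completes the first paragraph.

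The main obstacle is precisely the unbounded-times lemma: multi-sensitivity as stated only yields one good time for a given finite family, whereas the tree construction demands arbitrarily large good times shared by $2^{m}$ sets at once. The shrinking-and-induction argument above is how I would get around this; checking that every intersected set stays nonempty and open at each stage, and pinning down the exact role of the surjectivity of $f_0$, are the only points I expect to require care. Everything else — the tree bookkeeping, the disjointness of the balls, and the conversion of the separated sets into the entropy estimate — is routine.
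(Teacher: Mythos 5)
Your proof is correct, and while its overall skeleton is the same as the paper's --- repeated applications of multi-sensitivity to a binary tree of nested nonempty open sets, producing for every $m$ a set of $2^m$ points that is $(m,\delta/3,A)$-separated along one fixed sequence $A$, hence $h_A(f_{0,\infty})\geq\log 2$, with the statements about $\hat f_{0,\infty}$ then following from Theorem \ref{sequence entropy 1} --- the key technical device differs. The paper secures the crucial requirement $a_{m+1}>a_m$ by a uniform-continuity trick: before invoking multi-sensitivity it shrinks each of the current sets to a ball so small that its image under $f_0^{t}$ has diameter at most $\epsilon$ for every $t\leq a_m$, so the time furnished by multi-sensitivity (at which the diameter exceeds $\epsilon$) is forced to exceed $a_m$. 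You instead prove a standalone lemma that for any finite family of nonempty open sets the set of common sensitivity times is infinite, via the shrink-and-descend contradiction on the maximum; that lemma is correct and does the same job, and it encodes the separation at time $a_j$ directly into the definition of the sets $V_{\omega}$ rather than inheriting it from earlier stages. A second difference concerns surjectivity: the paper uses surjectivity of $f_0$ only to seed the construction, taking $V_i=f_0^{-1}(B(z_i,\xi))$ for two points $z_1\neq z_2$ so that the first separation occurs at time $a_1=1$; your construction starts from $V_{\emptyset}=X$ and never needs surjectivity at all (your intersected sets are nonempty because they contain the chosen points $p_\omega$, $q_\omega$), so the hedge in your third paragraph can be dropped --- you have in fact proved the statement without that hypothesis. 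What the paper's route buys is a very concrete recursion on points and small balls that parallels the locally connected refinement in Theorem \ref{8182}; what yours buys is a reusable ``infinitely many common sensitivity times'' lemma and the removal of the surjectivity assumption.
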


\begin{proof}
Let $\epsilon>0$ be the multi-sensitivity constant of $(X,f_{0,\infty})$.
Take $z_1\neq z_2\in X$. Denote
\begin{equation}\label{411}
0<\xi<\min\{\epsilon,d(z_1,z_2)/3\},\; U_i=B(z_i,\xi),\; V_i=f_{0}^{-1}(U_i),\;i=1,2.
\end{equation}
In the following, we shall inductively define a sequence $A=\{a_n\}_{n=1}^{\infty}\in\mathcal{S}$ satisfying that
$h_{A}(f_{0,\infty})\geq\log 2$.

Let $a_1=1$. Fix $x_1\in V_1$ and $x_2\in V_2$. It follows from (\ref{411}) that
\begin{equation}\label{421}
d(f_{0}^{a_1}(x_1),f_{0}^{a_1}(x_2))\geq d(z_1,z_2)-d(f_{0}(x_1),z_1)-d(f_{0}(x_2),z_2)>\xi.
\end{equation}
Then $\{x_1,x_2\}$ is $(1,\xi,A)$-separated, and thus
\[s_{1}(\xi,A,f_{0,\infty})\geq 2.\]
By the continuity of $f_0$, there exists $\delta_1>0$ such that for any $x,y\in X$,
\[d(x,y)<\delta_1\Rightarrow d(f_0(x),f_0(y))<\epsilon.\]
Choose $0<\zeta_1<\delta_1/2$ satisfying that $B(x_i,\zeta_1)\subset V_i$, $i=1,2$.
Since $(X,f_{0,\infty})$ is multi-sensitive, there exists $a_2>a_1$ such that $d(f_{0}^{a_2}(B(x_i,\zeta_1)))>\epsilon$ for $i=1,2$.
So, there exist $x_{(i,1)},x_{(i,2)}\in B(x_i,\zeta_1)$ such that
\begin{equation}\label{431}
d(f_{0}^{a_2}(x_{(i,1)}),f_{0}^{a_2}(x_{(i,2)}))>\epsilon>\xi,\;i=1,2.
\end{equation}
This, together with (\ref{421}), implies that $\{x_{(1,1)},x_{(1,2)},x_{(2,1)},x_{(2,2)}\}$ is $(2,\xi,A)$-separated, and thus
\[s_{2}(\xi,A,f_{0,\infty})\geq 2^{2}.\]
By the continuity of $f_{0}^{t}$, $2\leq t\leq a_2$, there exists $\delta_2>0$ such that for any $x,y\in X$,
\[d(x,y)<\delta_2\Rightarrow d(f_{0}^{t}(x),f_{0}^{t}(y))<\epsilon,\;2\leq t\leq a_2.\]
Choose $0<\zeta_2<\delta_2/2$ satisfying that $B(x_{(i,j)},\zeta_2)\subset B(x_i,\zeta_1)$, $i,j=1,2$.
Since $(X,f_{0,\infty})$ is multi-sensitive, there exists $a_3>a_2$ such that
$d(f_{0}^{a_3}(B(x_{(i,j)},\zeta_2)))>\epsilon$ for any $1\leq i,j\leq 2$. Then there exist
$x_{(i,j,1)},x_{(i,j,2)}\in B(x_{(i,j)},\zeta_2)$ such that
\begin{equation}\label{441}
d(f_{0}^{a_3}(x_{(i,j,1)}),f_{0}^{a_3}(x_{(i,j,2)}))>\epsilon>\xi,\;1\leq i,j\leq 2.
\end{equation}
It follows from (\ref{421})-(\ref{441}) that the set of points $\{x_{(i,j,k)}: 1\leq i,j,k\leq 2\}$ is $(3,\xi,A)$-separated, and thus
\[s_{3}(\xi,A,f_{0,\infty})\geq 2^3.\]
By repeating the above process, one obtains that there exist $a_n>a_{n-1}>\cdots>a_1$ such that the set of points
$\{x_{(i_1,\cdots,i_n)}: 1\leq i_1,\cdots,i_n\leq 2\}$ is $(n,\xi,A)$-separated, and thus
\[s_{n}(\xi,A,f_{0,\infty})\geq 2^n.\]
Hence,
\[h^{*}(f_{0,\infty})\geq h_{A}(f_{0,\infty})=\lim_{\xi\to0}\limsup_{n\to\infty}\frac{1}{n}\log s_{n}(\xi,A,f_{0,\infty})\geq\log 2.\]
It follows from Theorem \ref{sequence entropy} that $h_A(\hat{f}_{0,\infty})=+\infty$, and consequently,
$h^{*}(\hat{f}_{0,\infty})=+\infty$.
\end{proof}

By making a slight modification to the proof of Theorem \ref{8181}, one obtains the following result,
we give its detailed proof for completeness.

\begin{theorem}\label{8182}
Let $X$ be a locally connected and compact metric space and $f_0$ be surjective.
If $(X,f_{0,\infty})$ is multi-sensitive, then there exists $A\in\mathcal{S}$ such that $h_{A}(f_{0,\infty})=+\infty$,
and thus $h_{A}(\hat f_{0,\infty})=h^{*}(\hat f_{0,\infty})=h^{*}(f_{0,\infty})=+\infty$.
\end{theorem}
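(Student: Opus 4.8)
The plan is to adapt the inductive construction in the proof of Theorem \ref{8181}, replacing the binary branching at each stage by a branching whose width diverges along the tree; this is exactly where local connectedness of $X$ enters, via the elementary fact that a \emph{connected} set of large diameter contains many geometrically spaced points.

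First I would isolate that fact: if $C$ is a connected subset of a metric space with $\mathrm{diam}\,C>\epsilon$, then for every $m\geq1$ there are $y_1,\dots,y_m\in C$ with $d(y_i,y_j)\geq|i-j|\,\epsilon/(3m)$ for all $i,j$. Indeed, pick $p,q\in C$ with $d(p,q)>\epsilon$; the map $\phi(x)=d(p,x)$ is continuous, so $\phi(C)$ is an interval containing $[0,\epsilon]$, and choosing $y_i\in C$ with $d(p,y_i)=(i-1)\epsilon/(3m)$ and applying the reverse triangle inequality does it. Then I would run the induction as in Theorem \ref{8181}. Let $\epsilon$ be the multi-sensitivity constant, fix $z_1\neq z_2$ and $\xi_0<\min\{\epsilon,d(z_1,z_2)/3\}$, and seed the tree at level $1$ with two points $x_1\in f_0^{-1}(B(z_1,\xi_0))$, $x_2\in f_0^{-1}(B(z_2,\xi_0))$ (nonempty since $f_0$ is surjective), enclosed in disjoint connected open neighbourhoods contained in $f_0^{-1}(B(z_i,\xi_0))$ (local connectedness). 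Fix any $m_n\to\infty$. Given the tree built to depth $n-1$, with each leaf $x_\omega$ enclosed in a connected open set $C_\omega$, apply multi-sensitivity to the finite family $\{C_\omega\}$ to get $a_n>a_{n-1}$ with $d(f_0^{a_n}(C_\omega))>\epsilon$ for every $\omega$; as $f_0^{a_n}(C_\omega)$ is connected, the fact above yields $m_n$ points in it spaced at scale $\epsilon/(3m_n)$, and pulling them back into $C_\omega$ gives the $m_n$ children of $x_\omega$, enclosed in nested connected open neighbourhoods of radius $\zeta_n$. Here $\zeta_n$ is chosen, after $a_1,\dots,a_n$ are fixed, small enough that by uniform continuity of $f_0^{a_1},\dots,f_0^{a_n}$ every later refinement perturbs the $f_0^{a_j}$-coordinate ($j\leq n$) of a deeper leaf by at most a prescribed amount, with a $2^{-(n-j)}$-weighted budget in the $j$-th coordinate so the accumulated perturbation stays summably small.

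The conceptual heart of the upgrade from $\log 2$ to $+\infty$ is that, although a single sequence $A$ has been produced, for every observation scale $\xi\in(0,\xi_0)$ one can still extract a \emph{wide} sub-tree: at level $n$ keep only the children with indices $1,1+q_n,1+2q_n,\dots$ where $q_n=\lceil 6m_n\xi/\epsilon\rceil$, so any two kept children at level $n$ are $\geq 2\xi$-separated under $f_0^{a_n}$, while their number is at least $\lfloor\epsilon/(12\xi)\rfloor$ once $m_n$ is large. With the perturbation budget arranged so that each deep leaf stays within $\xi/3$ of its intended branch points in every coordinate, the leaves of this sub-tree at depth $N$ form an $(N,\xi,A)$-separated set of size at least $\lfloor\epsilon/(12\xi)\rfloor^{\,N-N_0(\xi)}$ (for $N$ beyond a $\xi$-dependent level $N_0(\xi)$, the first levels contributing only a constant factor). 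Hence $\limsup_{N\to\infty}\frac1N\log s_N(\xi,A,f_{0,\infty})\geq\log\lfloor\epsilon/(12\xi)\rfloor$, and letting $\xi\to0$ gives $h_A(f_{0,\infty})=+\infty$. Then $h_A(\hat f_{0,\infty})=+\infty$ by Theorem \ref{sequence entropy 1}, and $h^{*}(f_{0,\infty})\geq h_A(f_{0,\infty})$, $h^{*}(\hat f_{0,\infty})\geq h_A(\hat f_{0,\infty})$ complete the proof.

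I expect the main technical obstacle to be the bookkeeping of the radii $\zeta_n$: they must be small relative to the level-$n$ separation scale $\epsilon/(3m_n)$, which itself shrinks along the tree, and the perturbations from all later levels must remain summably small in every earlier coordinate simultaneously; fixing $\zeta_n$ only after $a_1,\dots,a_n$ are known, with the geometric weighting above, resolves this but needs care. The other delicate point is uniformity and connectedness: multi-sensitivity must be invoked for the whole finite family $\{C_\omega\}$ at once so that a single $a_n$ works for every node of the current level, and one must check that local connectedness genuinely allows the nested connected neighbourhoods to be taken of the prescribed, arbitrarily small radii.
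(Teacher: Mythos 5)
Your construction is correct and rests on the same basic mechanism as the paper's proof of Theorem \ref{8182}: a tree of nested connected open cells, one application of multi-sensitivity per level to the whole finite family of cells so that a single time $a_n$ works for every node, the intermediate-value argument on the connected image of diameter larger than the sensitivity constant to produce many well-spaced points, pull-backs of those points as children, and uniform continuity of $f_0^{a_1},\dots,f_0^{a_n}$, invoked only after $a_n$ is fixed, to shrink the cell radii. Where you genuinely differ is in how infinite entropy along a \emph{single} sequence is obtained. The paper fixes an integer $n$, builds a tree of constant width $n$ at one fixed scale $\xi(n)$, gets $s_m(\xi,A,f_{0,\infty})\ge n^m$, hence $h_A(f_{0,\infty})\ge\log n$, and then concludes with ``since $n$ is arbitrary''; but the sequence $A$ built there depends on $n$, so as written that argument gives $h^*(f_{0,\infty})=+\infty$ directly while leaving the existence of one $A$ with $h_A(f_{0,\infty})=+\infty$ implicit. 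Your variant, with branching widths $m_n\to\infty$ along one tree and, for each observation scale $\xi$, the extraction of a sub-tree with roughly $\epsilon/(12\xi)$ kept children per node that are $2\xi$-separated at time $a_n$, produces a single $A$ and proves $h_A(f_{0,\infty})=+\infty$ by letting $\xi\to0$; this buys exactly the assertion of the theorem, at the cost of the scale-dependent pruning.

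One piece of bookkeeping needs adjusting. With a $2^{-(n-j)}$-weighted budget whose prefactor is a fixed constant, the perturbation accumulated at the $j$-th coordinate over all later levels sums to that constant, independent of $j$; since the tree is built before $\xi$ is chosen, the requirement that leaves stay within $\xi/3$ of their branch points then fails for small $\xi$. What you need is that the accumulated error at coordinate $a_j$ tends to $0$ as $j\to\infty$: for instance, at stage $n$ choose $\zeta_n$ so that any two points of one cell have $f_0^{a_j}$-images within $\min\{2^{-n},\epsilon/(12m_n)\}$ for all $j\le n$; then the accumulated error at coordinate $a_j$ is $O(2^{-j})$, and for each $\xi$ you enlarge $N_0(\xi)$ so that beyond it both $m_n$ is large enough and this error is below $\xi/3$. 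With that modification your separation count, the bound $\limsup_N\frac1N\log s_N(\xi,A,f_{0,\infty})\ge\log\lfloor\epsilon/(12\xi)\rfloor$, and the final passage to $h_A(f_{0,\infty})=+\infty$ and, via Theorem \ref{sequence entropy 1}, to $h_A(\hat f_{0,\infty})=h^{*}(\hat f_{0,\infty})=h^{*}(f_{0,\infty})=+\infty$ all go through as you describe.
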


\begin{proof}
Let $2\epsilon>0$ be the multi-sensitivity constant of $(X,f_{0,\infty})$.
Fix $n\geq2$ and take $n$ distinct points $z_1,\cdots,z_n\in X$.
Let
\begin{equation}\label{41}
0<\xi<\min\{\epsilon/(n-1),\min_{1\leq i\neq j\leq n} d(z_i,z_j)/3\}.
\end{equation}
Now, we shall inductively define a sequence $A=\{a_k\}_{k=1}^{\infty}\in\mathcal{S}$ satisfying that
\begin{equation*}
s_{m}(\xi,A,f_{0,\infty})\geq n^m,\;m\geq1.
\end{equation*}
If this equality holds, then
\[h_{A}(f_{0,\infty})=\lim_{\xi\to0}\limsup_{m\to\infty}\frac{1}{m}\log s_{m}(\xi,A,f_{0,\infty})\geq\log n.\]
Since $n$ is arbitrary, $h_{A}(f_{0,\infty})=+\infty$, and thus $h^{*}(f_{0,\infty})=+\infty$.
By Theorem \ref{sequence entropy 1}, we also have $h_{A}(\hat f_{0,\infty})=h^{*}(\hat f_{0,\infty})=+\infty$.

Let $a_1=1$ and take $z_i'\in f_{0}^{-1}(z_i)$, $1\leq i\leq n$.
It is clear that
\[d(f_{0}(z_i'),f_{0}(z_j'))>\xi,\;1\leq i\neq j\leq n.\]
Then the set of points $\{z_i': 1\leq i\leq n\}$ is $(1,\xi,A)$-separated, and thus
\[s_{1}(\xi,A,f_{0,\infty})\geq n.\]
Denote $U_i=B(z_i,\xi)$, $1\leq i\leq n$. Then
\begin{equation}\label{42}
\inf\{d(x,y): x\in U_i,\;y\in U_j\}>\xi,\;1\leq i\neq j\leq n.
\end{equation}
By the continuity of $f_{0}$, there exists $\delta_1>0$ such that for any $x,y\in X$,
\[d(x,y)<\delta_1\Rightarrow d(f_{0}(x),f_{0}(y))<2\epsilon.\]
Since $X$ is locally connected, there exists nonempty open and connected subset
$V_i\subset f_{0}^{-1}(U_i)$ with $d(V_i)<\delta_1$, $1\leq i\leq n$.
By the multi-sensitivity of $(X,f_{0,\infty})$, there exists $a_2>a_1$ such that
$d(f_{0}^{a_2}(V_i))>2\epsilon$ for any $1\leq i\leq n$.
Then, there exist $x_{(i1)},x_{(i1)}'\in V_i$ such that $d(f_{0}^{a_2}(x_{(i1)}),$ $f_{0}^{a_2}(x_{(i1)}'))>2\epsilon$.
Define a map $f: V_i\to\mathds{R}$ by $f(x)=d(f_{0}^{a_2}(x),f_{0}^{a_2}(x_{(i1)}))$ for any $x\in V_i$.
It is clear that $f(x_{(i1)})=0$, $f(x_{(i1)}')>2\epsilon$ and $f$ is continuous on $V_i$.
Since $X$ is locally connected,$f(V_i)$ is an interval, and then $[0,2\epsilon]\subset f(V_i)$.
Thus, we can choose $n-1$ distinct points $x_{(i2)},\cdots,x_{(in)}\in V_i$ such that
\[f(x_{(ij)})=d(f_{0}^{a_2}(x_{(ij)}),f_{0}^{a_2}(x_{(i1)}))=\frac{2(j-1)\epsilon}{n-1},\;2\leq j\leq n,\]
which implies that
\begin{equation}\label{43}
\begin{split}
d(f_{0}^{a_2}(x_{(ij)}),f_{0}^{a_2}(x_{(ik)}))&\geq d(f_{0}^{a_2}(x_{(ik)}),f_{0}^{a_2}(x_{(i1)}))-d(f_{0}^{a_2}(x_{(ij)}),f_{0}^{a_2}(x_{(i1)}))\\
&=\frac{2(k-j)\epsilon}{n-1}\geq\frac{2\epsilon}{n-1},\;\;2\leq j<k\leq n,\;1\leq i\leq n.
\end{split}
\end{equation}
Moreover, by the continuity of $f_{0}^{t}$, $2\leq t\leq a_2$, there exists $\delta_2>0$ such that
$V_{ij}:=B(x_{(ij)},\delta_2/2)\subset V_i$ is connected and for any $x,y\in X$ and $2\leq t\leq a_2$,
\begin{equation}\label{44}
d(x,y)<\delta_2\Rightarrow d(f_{0}^{t}(x),f_{0}^{t}(y))<\frac{\epsilon}{2(n-1)}.
\end{equation}
We claim that for any $(i_1,i_2)\neq(j_1,j_2)\in\{1,\cdots,n\}^2$,
\begin{equation}\label{831}
\min_{1\leq k\leq 2}\{d(f_{0}^{a_k}(x),f_{0}^{a_k}(y))\}>\xi,\;x\in V_{i_1i_2},\;y\in V_{j_1,j_2}.
\end{equation}
In fact, if $i_1\neq j_1$. Then $x\in V_{i_1}\subset f_{0}^{-1}(U_{i_1})$ and $y\in V_{j_1}\subset f_{0}^{-1}(U_{j_1})$.
This, together with (\ref{42}), implies that
\[d(f_{0}(x),f_{0}(y))>\xi.\]
If $i_1=j_1$ and $i_2\neq j_2$. By (\ref{43}) and (\ref{44}) we have that
\begin{equation*}
\begin{split}
d(f_{0}^{a_2}(x),f_{0}^{a_2}(y))&\geq d(f_{0}^{a_2}(x_{i_{1}i_{2}}),f_{0}^{a_2}(x_{j_{1}j_{2}}))-d(f_{0}^{a_2}(x_{i_{1}i_{2}}),f_{0}^{a_2}(x))
-d(f_{0}^{a_2}(x_{j_{1}j_{2}}),f_{0}^{a_2}(y))\\
&>\frac{2\epsilon}{n-1}-\frac{\epsilon}{n-1}=\frac{\epsilon}{n-1}>\xi.
\end{split}
\end{equation*}
Then the set of points $\{x_{(i,j)}: 1\leq i,j\leq n\}$ is $(2,\xi,A)$-separated, and thus
\[s_{2}(\xi,A,f_{0,\infty})\geq n^2.\]

By the multi-sensitivity of $(X,f_{0,\infty})$, there exists $a_3>a_2$ such that
$d(f_{0}^{a_3}(V_{ij}))>2\epsilon$ for any $1\leq i,j\leq n$.
Then, there exist $x_{(ij1)},x_{(ij1)}'\in V_{ij}$ such that $d(f_{0}^{a_3}(x_{(ij1)}),$ $f_{0}^{a_3}(x_{(ij1)}'))>2\epsilon$.
Define a map $f: V_{ij}\to\mathds{R}$ by $f(x)=d(f_{0}^{a_3}(x),f_{0}^{a_2}(x_{(ij1)}))$ for any $x\in V_{ij}$.
It is clear that $f(x_{(ij1)})=0$, $f(x_{(ij1)}')>2\epsilon$.
Since $X$ is locally connected, $[0,2\epsilon]\subset f(V_{ij})$.
Thus, we can choose $n-1$ distinct points $x_{(ij2)},\cdots,x_{(ijn)}\in V_{ij}$ such that
\[f(x_{(ijk)})=d(f_{0}^{a_3}(x_{(ijk)}),f_{0}^{a_3}(x_{(ij1)}))=\frac{2(j-1)\epsilon}{n-1},\;2\leq k\leq n,\]
which implies that
\begin{equation}\label{832}
\begin{split}
d(f_{0}^{a_3}(x_{(ijk_1)}),f_{0}^{a_3}(x_{(ijk_2)}))&\geq d(f_{0}^{a_3}(x_{(ijk_1)}),f_{0}^{a_3}(x_{(ij1)}))-d(f_{0}^{a_3}(x_{(ijk_2)}),f_{0}^{a_2}(x_{(ij1)}))\\
&=\frac{2(k_2-k_1)\epsilon}{n-1}\geq\frac{2\epsilon}{n-1},\;\;2\leq k_1<k_2\leq n,\;1\leq i,j\leq n.
\end{split}
\end{equation}
Moreover, by the continuity of $f_{0}^{t}$, $a_2+1\leq t\leq a_3$, there exists $\delta_3>0$ such that
$V_{ijk}:=B(x_{(ijk)},\delta_3/2)\subset V_{ij}$ is connected and for any $x,y\in X$ and $a_2+1\leq t\leq a_3$,
\begin{equation}\label{44}
d(x,y)<\delta_3\Rightarrow d(f_{0}^{t}(x),f_{0}^{t}(y))<\frac{\epsilon}{2(n-1)}.
\end{equation}
We claim that for any $(i_1,i_2,i_3)\neq(j_1,j_2,j_3)\in\{1,\cdots,n\}^3$,
\begin{equation*}
\min_{1\leq k\leq 3}\{d(f_{0}^{a_k}(x),f_{0}^{a_k}(y))\}>\xi,\;x\in V_{i_1i_2i_3},\;y\in V_{j_1j_2j_3}.
\end{equation*}
In fact, If $(i_1,i_2)\neq(j_1,j_2)$. Then $x\in V_{i_1i_2}$ and $y\in V_{j_1j_2}$.
By (\ref{831}), we have that
\[\min_{1\leq k\leq 2}\{d(f_{0}^{a_k}(x),f_{0}^{a_k}(y))\}>\xi.\]
If $(i_1,i_2)=(j_1,j_2)$ and $i_3\neq j_3$. By (\ref{832}), we have that
\begin{equation*}
d(f_{0}^{a_3}(x),f_{0}^{a_3}(y))>\xi.
\end{equation*}
Thus
\[s_{3}(\xi,A,f_{0,\infty})\geq n^3.\]

By repeating the above procedure, we can find $a_m>\cdots>a_2>a_1$ and $n^{m}$ distinct points
$x_{(i_1,\cdots,i_{m})}\in V_{i_1\cdots i_{m}}\subset V_{i_1\cdots i_{m-1}}$, such that
for any $(i_1,\cdots,i_{m})\neq(j_1,\cdots,j_{m})\in\{1,\cdots,n\}^{m}$,
\begin{equation*}
\min_{1\leq k\leq m}\{d(f_{0}^{a_k}(x),f_{0}^{a_k}(y))\}>\xi,\;x\in V_{(i_1,\cdots,i_{m})},\;y\in V_(j_1,\cdots,j_{m}),
\end{equation*}
which yields that
\[s_{m}(\xi,A,f_{0,\infty})\geq n^{m}.\]
\end{proof}

\begin{remark}
Theorems \ref{8181} and \ref{8182} are mainly inspired by Proposition 6.1 in \cite{Huang16} and Theorem 2 in \cite{Wu17}.
\end{remark}

\section*{Acknowledgement}
This research was partially supported by the Natural Science Foundation of Jiangsu Province of China (No. BK20200435)
and the Fundamental Research Funds for the Central Universities (No. NS2021053).

\end{document}